\newtheorem{theorem}{Theorem}[section]
\newtheorem{proposition}{Proposition}[section]
\newtheorem{lemma}[proposition]{Lemma}
\theoremstyle{definition}
\newtheorem{definition}[theorem]{Definition}
\newtheorem{example}[theorem]{Example}
\theoremstyle{remark}
\newtheorem{remark}[theorem]{Remark}
\numberwithin{equation}{section}
\begin{document}

\title{Conformal Courant algebroids and orientifold T-duality}
\author{David Baraglia}

\address{Mathematical sciences institute, The Australian National University, Canberra ACT 0200, Australia}


\email{david.baraglia@anu.edu.au}

\thanks{This work is supported by the Australian Research Council Discovery Project DP110103745.}

\subjclass[2010]{53D18, 53C08, 81T30, 19L50}

\date{\today}


\begin{abstract}
We introduce conformal Courant algebroids, a mild generalization of Courant algebroids in which only a conformal structure rather than a bilinear form is assumed. We introduce exact conformal Courant algebroids and show they are classified by pairs $(L,H)$ with $L$ a flat line bundle and $H \in H^3(M,L)$ a degree $3$ class with coefficients in $L$. As a special case gerbes for the crossed module $({\rm U}(1) \to \mathbb{Z}_2)$ can be used to twist $TM \oplus T^*M$ into a conformal Courant algebroid. In the exact case there is a twisted cohomology which is $4$-periodic if $L^2 = 1$. The structure of Conformal Courant algebroids on circle bundles leads us to construct a T-duality for orientifolds with free involution. This incarnation of T-duality yields an isomorphism of $4$-periodic twisted cohomology. We conjecture that the isomorphism extends to an isomorphism in twisted $KR$-theory and give some calculations to support this claim.
\end{abstract}

\maketitle


\section{Introduction}

The structure of Courant algebroids makes them useful as a way of capturing the geometry of string theory compactifications. Geometric properties of string theory compactifications can be related to geometric structures on a corresponding Courant algebroid. Courant algebroids also help to clarify the nature of T-duality since much of T-duality (including the Buscher rules \cite{bus}) can be expressed as an isomorphism of Courant algebroids associated to the T-dual spaces. 

Exact Courant algebroids appear in the phase space description of a string propagating in a space $X$ \cite{alst} and this also helps to explain the relation between exact Courant algebroids and gerbes since a gerbe on $X$ with connection and curving determines a ${\rm U}(1)$-valued holonomy over compact oriented surfaces in $X$. Non-exact Courant algebroids are also physically relevant. A process of dimensional reduction produces non-exact Courant algebroids from exact ones. When the dimensional reduction happens over a torus bundle the resulting Courant algebroid admits additional symmetries which can be understood in terms of T-duality. Taking this further Courant algebroids offer a way of understanding so-called non-geometric fluxes and T-folds in a more geometric light.

Despite these selling points Courant algebroids are not the final word in this story. In order to capture the geometry behind compactifications of M-theory or type II string theory with Ramond-Ramond fluxes more complicated bundles that replace the bundle $TX \oplus T^*X$ in generalized geometry have been proposed \cite{hull}. In \cite{bar0} we proposed that these new bundles should have the structure of {\em Leibniz algebroids} and showed that such structure is indeed present. How useful this picture is remains to be seen, but in this paper we will demonstrate the relevance of Leibniz algebroids over Courant algebroids from a completely different source. The structure we introduce we call {\em conformal Courant algebroids} because the bilinear form used in Courant algebroids is replaced by a conformal structure. Despite being only a mild generalization of Courant algebroids the structure of conformal Courant algebroids turns out to be quite interesting. We have found that conformal Courant algebroids are relevant to the T-duality of orientifolds. For simplicity we restrict to the simplest possible class of orientifold, namely those defined by a free involution. This allows us to work with manifolds rather than orbifolds, but we fully expect that our results can be extended to the general orientifold setting.\\

The definition of a conformal Courant algebroid is given in Definition \ref{cca}. Essentially it is a Leibniz algebroid $E$ and a line bundle $L$ equipped with a non-degenerate pairing $\langle \, , \, \rangle : E \otimes E \to L$ and an operator $\nabla : \Gamma(L) \to \Gamma(E)$ (constructed from an $E$-connection on $L$) which satisfy the appropriate generalization of the Courant algebroid axioms. An exact conformal Courant algebroid on $M$ is one for which the natural sequence $0 \to T^*M \otimes L \buildrel \rho^* \over \to  E \buildrel \rho \over \to TM \to 0$ is exact. As explained further in the paper it turns out that conformal Courant algebroids are related to the $E$-Courant algebroids of \cite{cls} and the $AV$-Courant algebroids of \cite{lb}. We classify exact conformal Courant algebroids in Proposition \ref{class}. Isomorphism classes of exact conformal Courant algebroids on $M$ are in bijection with pairs $(L,H)$ where $L$ is a flat line bundle and $H \in H^3(M,L)$ is a degree $3$ cohomology class in the corresponding local system, modulo the equivalence relation $(L,H) \simeq (L,cH)$ for $c \in \mathbb{R}^*$. For applications to orientifolds $L$ should be an orthogonal line bundle corresponding to a class $\epsilon \in H^1(M,\mathbb{Z}_2)$. The class $\epsilon$ determines a double cover $M_\epsilon \to M$ and an involution of $M_\epsilon$, which makes $M_\epsilon$ into a simple kind of orientifold.\\

In Section \ref{twco} we show how there is a notion of twisted cohomology associated to exact conformal Courant algebroids which generalizes cohomology twisted by a closed $3$-form. When $L$ is orthogonal the cohomology is $4$-periodic. In Section \ref{twg} we show that the standard untwisted Courant algebroid structure on $TM \oplus T^*M$ can be twisted by $\epsilon$-twisted graded gerbes. These are a kind of gerbe that generalize bundle gerbes in two ways. First they have a grading which affects the twisted cohomology and second they are gerbes associated to the crossed module $( {\rm U}(1) \buildrel 1 \over \to \mathbb{Z}_2)$ where $\mathbb{Z}_2$ acts on ${\rm U}(1)$ by inversion. This gives a construction for exact conformal Courant algebroids when $L$ is an orthogonal line bundle corresponding to $\epsilon \in H^1(M,\mathbb{Z}_2)$ and $H \in H^3(M,L)$ is integral. The inclusion of a non-trivial class $\epsilon$ corresponds to twisting $TM \oplus T^*M$ by transitions of the form $(X,\xi) \mapsto (X,-\xi)$. This map preserves the standard Courant bracket on $TM \oplus T^*M$ but anti-preserves the natural pairing $\langle \, , \, \rangle$. From the string theory point of view we can think of such transitions as being related to the worldsheet parity reversal operator $\Omega$. We think it is remarkable that this important symmetry in string theory has such a simple interpretation in terms of the Courant bracket on $TM \oplus T^*M$. Gerbes of the form just described can be used to twist $KR$-theory, Atiyah's real $K$-theory of spaces with involution \cite{at}. This makes sense physically since it is thought that on an orientifold Ramond-Ramond flux is classified by a twisted orbifold version of $KR$-theory.\\

In Section \ref{tdot} we investigate T-duality from the point of view of orientifolds and conformal Courant algebroids (again only for the simplest class of orientifold). By demanding that T-duality yields an isormophism of conformal Courant algebroids we are lead to Definition \ref{tdd} which lists the topological conditions that should underlie such an isomorphism. It also generalizes toplogical T-duality for general circle bundles as defined in \cite{bar} which itself is a generalization of topological T-duality in the principal circle bundle case \cite{bem}. It is worth noting that if $\epsilon \neq 0$ then the T-dual torus bundles can not both be principal, therefore incorporating monodromy into T-duality is a necessity for orientifold T-duality. For circle bundles we show in Proposition \ref{tdprop} existence and uniqueness of T-duals. Section \ref{otd} concerns the main properties of this T-duality, namely an isomorphism in twisted cohomology (Proposition \ref{tdciso}) and an isomorphism of conformal Courant algebroids (Proposition \ref{ccaiso}). Finally in Section \ref{twkr} we offer some arguments to support our claim that T-duals also have isomorphic twisted $KR$-theory. It should be possible to prove this using a Fourier-Mukai type of transformation as in \cite{bem}, but we have not attempted to develop the necessary tools (in particular we need a push-forward in twisted $KR$-theory). Despite this we are able to give in Example \ref{ex1} a simple example of orientifold T-duality between the $2$-torus and the Klein bottle which demonstrates the expected isomorphism in $KR$-theory.\\

A few words should be said about what we don't cover. We focus primarily on T-duality in the circle bundle case to avoid extraneous complications that come with higher rank torus bundles. These will be covered in \cite{bar1}, besides which the main focus here is on the move from Courant algebroids to conformal Courant algebroids. We have already stressed that we consider only the simplest possible kind of orientifold, those defined by a free involution. It would be worthwhile to pursue the notion of Courant algebroids and conformal Courant algebroids on orbifolds/orientifolds. With this in place we would expect our formulation of orientifold T-duality carries over. This is definitely not a completely trivial task however. Finally we have not attempted to give a definition of twisted $KR$-theory. When the class $\epsilon = 0$ it reduces to twisted complex $K$-theory. When the $H$-flux $h \in H^3(M,\mathbb{Z}_\epsilon)$ is torsion one can use the definition in \cite{braste} in terms of twisted vector bundles. In the general case only a minor modification from the definition of twisted complex $K$-theory in say \cite{fht} using sections of bundles of Fredholm operators should work, but we have not attempted to carry this out here.


\section{Conformal Courant algebroids}


\subsection{Definitions and basic properties}

\begin{definition}\label{leib}
A {\em Leibniz algebroid} \cite{iban},\cite{bar0} $(E, [ \, , \, ], \rho)$ on $M$ consists of a vector bundle $E$, a vector bundle map $\rho : E \to TM$ called the {\em anchor} and a map $[ \, , \, ] : \Gamma(E) \otimes \Gamma(E) \to \Gamma(E)$ called the {\em Dorfman bracket} such that
\begin{itemize}
\item[(L1)]{$[a,[b,c]] = [[a,b],c] + [b,[a,c]]$,}
\item[(L2)]{$[a,fb] = f[a,b] + \rho(a)(f)b$,}
\item[(L3)]{$\rho[a,b] = [\rho(a),\rho(b)]$,}
\end{itemize}
for all sections $a,b,c$ of $E$ and functions $f$ on $M$.
\end{definition}
Note that axiom (L3) is actually a consequence of the first two axioms \cite{bar0} but is included in the definition for completeness. The Dorfman bracket need not be skew-symmetric so it is generally not a Lie bracket. When it is skew-symmetric we recover the definition of a {\em Lie algebroid} \cite{mac}.\\

Next we introduce the notion of {\em $E$-connections} for a Leibniz algebroid $E$, which generalizes the more familiar case of Lie algebroid connections \cite{fer}.

\begin{definition}
Let $(E, [ \, , \, ], \rho)$ be a Leibniz algebroid. An {\em $E$-connection} on a vector bundle $V$ is an $\mathbb{R}$-linear map $\nabla : \Gamma(V) \to \Gamma(E^* \otimes V)$ such that $\nabla_a (fv) = \rho(a)(f)v + f\nabla_a v$. We say that $V$ is {\em flat} if
\begin{equation}\label{flat}
\nabla_a \nabla_b v -\nabla_b \nabla_a v = \nabla_{[a,b]} v
\end{equation}
for all sections $a,b$ of $E$ and sections $v$ of $V$.
\end{definition}
\begin{remark}
Setting $a=b$ in (\ref{flat}) we see a necessary condition for $\nabla$ to be flat is $\nabla_{[a,a]}v = 0$ for all $a$ and $v$. In general if $\nabla$ is an $E$-connection, not necessarily flat then then expression $\nabla_a \nabla_b v - \nabla_b \nabla_a v - \nabla_{[a,b]} v$ need not be $\mathcal{C}^\infty$-linear in $a$ in contrast with the Lie algebroid case. However in the special case that $\nabla_{[a,a]}v = 0$ for all $a,v$ we find that $\nabla_a \nabla_b v - \nabla_b \nabla_a v - \nabla_{[a,b]} v$ is $\mathcal{C}^\infty$-linear in $a,b,v$ and so defines a section of $\wedge^2 E^* \otimes {\rm End}(V)$ which we might regard as the curvature of $\nabla$.
\end{remark}

We are now ready to introduce the structure which is the main theme of this paper:

\begin{definition}\label{cca}
A {\em conformal Courant algebroid} $(E, [ \, , \, ], \rho, \langle \, , \rangle, L , \nabla)$ on $M$ consists of a Leibniz algebroid $(E, [ \, , \, ], \rho)$, a line bundle $L$ with $E$-connection $\nabla$ and symmetric non-degenerate pairing $\langle \, , \, \rangle : E \otimes E \to L$ such that
\begin{itemize}
\item[(CC1)]{$\nabla_a \langle b , c \rangle = \langle [a,b],c \rangle + \langle b , [a,c] \rangle$,}
\item[(CC2)]{$[a,b] + [b,a] = \nabla \langle a , b \rangle$,}
\end{itemize}
where in (CC2) we use $\langle \, , \, \rangle$ to identify $E^*\otimes L$ with $E$.
\end{definition}
The above definition is original but closely related notions appear in the Courant algebroid literature. It turns out that conformal Courant algebroids are a special case of the $E$-Courant algebroids of \cite{cls}. In the notation of \cite{cls} the bundle $E$ is our line bundle $L$ and the $E$-Courant algebroid anchor $E \to \mathfrak{D}L$ (not to be confused with our anchor $\rho : E \to TM)$ is given by $a \mapsto \nabla_a$. Finally Lemma \ref{flat} below ensures that axiom (EC-1) of \cite{cls} holds. The other axioms are straightforward to verify so that every conformal Courant algebroid is an $E$-Courant algebroid. Also closely related are the $AV$-Courant algebroids of \cite{lb}. In fact the exact conformal Courant algebroids defined in Section \ref{ecca} are precisely the $AV$-Courant algebroids when $A = TM$ is the tangent Lie algebroid and $V = L$ is a line bundle. In \cite{vai} Vaisman introduces a {\em conformal Courant bracket} which is a special case of our definition. Related structures are also found in \cite{wad} and \cite{gg}.\\

If we let $E$ be a Leibniz algebroid and take $L = \mathbb{R}$ with the standard $E$-connection $\nabla_a f = \rho(a)(f)$ then Definition \ref{cca} reduces to the definition of a {\em Courant algebroid} \cite{lwx}, satisfying the axioms for a Courant algebroid presented in \cite[Definition 2.6.1]{roy}.\\

There is a straightforward notion of isomorphism of conformal Courant algebroids, namely $(E,L)$ and $(E',L')$ are isomorphic if there are bundle isomorphisms $E \to E'$, $L \to L'$ exchanging all the structure in the evident manner. 

\begin{lemma}\label{flat}For any conformal Courant algebroid the connection $\nabla$ is flat.
\begin{proof}
Let $a,b,c,d$ be sections of $E$. We have
\begin{eqnarray*}
\nabla_a \nabla_b \langle c , d \rangle &=& \nabla_a \langle [b,c],d \rangle + \nabla_a \langle c , [b,d] \rangle \\
&=& \langle [a,[b,c]] , d \rangle + \langle [b,c],[a,d] \rangle + \langle [a,c],[b,d] \rangle + \langle c , [a,[b,d]] \rangle
\end{eqnarray*}
so that anti-symmetrizing $a,b$ gives
\begin{eqnarray*}
\nabla_a \nabla_b \langle c , d \rangle - \nabla_b \nabla_a \langle c , d \rangle &=& \langle [[a,b],c],d\rangle + \langle c , [[a,b],d] \rangle \\
&=& \nabla_{[a,b]} \langle c,d \rangle
\end{eqnarray*}
and the result follows by non-degeneracy of $\langle \, , \, \rangle$ and the fact that $\nabla_a \nabla_b v - \nabla_b \nabla_a v - \nabla_{[a,b]} v$ is $\mathcal{C}^\infty$-linear in $v$.
\end{proof}
\end{lemma}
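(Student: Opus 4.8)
The plan is to show that the curvature expression vanishes by exploiting the fact that $L$ is a \emph{line} bundle, so that $\langle\,,\,\rangle$ identifies endomorphisms of $L$ with functions, and by testing the flatness identity against the pairing, where we can use axiom (CC1) twice to expand $\nabla_a\nabla_b\langle c,d\rangle$. First I would fix sections $a,b,c,d$ of $E$ and apply (CC1) to $\langle b,c\rangle$ and $\langle c,[b,d]\rangle$, obtaining
\[
\nabla_a\nabla_b\langle c,d\rangle = \langle[a,[b,c]],d\rangle + \langle[b,c],[a,d]\rangle + \langle[a,c],[b,d]\rangle + \langle c,[a,[b,d]]\rangle .
\]
Then I would antisymmetrize in $a$ and $b$: the two cross terms $\langle[b,c],[a,d]\rangle$ and $\langle[a,c],[b,d]\rangle$ are manifestly symmetric under $a\leftrightarrow b$, so they cancel, leaving only the Leibniz-type terms, and by axiom (L1) (the Jacobi/Leibniz identity for the Dorfman bracket) $[a,[b,c]]-[b,[a,c]] = [[a,b],c]$. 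This collapses the expression to
\[
\nabla_a\nabla_b\langle c,d\rangle - \nabla_b\nabla_a\langle c,d\rangle = \langle[[a,b],c],d\rangle + \langle c,[[a,b],d]\rangle = \nabla_{[a,b]}\langle c,d\rangle,
\]
using (CC1) once more in the last step with $[a,b]$ in place of the first argument.

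Next I would pass from this identity on sections of $L$ of the form $\langle c,d\rangle$ to the full flatness identity on an arbitrary section $v$ of $L$. Here I would invoke non-degeneracy of $\langle\,,\,\rangle$: locally every section of $L$ is a linear combination of sections of the form $\langle c,d\rangle$ (indeed, since the pairing is non-degenerate one can choose $c,d$ with $\langle c,d\rangle$ nowhere vanishing on a neighborhood). Then I would use the remark preceding the lemma: the expression $\nabla_a\nabla_b v - \nabla_b\nabla_a v - \nabla_{[a,b]}v$ is $\mathcal{C}^\infty$-linear in $v$ once we know $\nabla_{[a,a]}v=0$ for all $a,v$. But setting $a=b$ in what we have proved for $v=\langle c,d\rangle$ shows $\nabla_{[a,a]}\langle c,d\rangle = 0$, and since such sections span $\Gamma(L)$ over $\mathcal{C}^\infty(M)$ we conclude $\nabla_{[a,a]}v=0$ for all $v$; hence the curvature expression is $\mathcal{C}^\infty$-linear in $v$, and since it vanishes on a spanning set it vanishes identically.

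The main obstacle, such as it is, is the last bookkeeping step: justifying that sections of the form $\langle c,d\rangle$ span $\Gamma(L)$ and that the $\mathcal{C}^\infty$-linearity in $v$ (which requires knowing $\nabla_{[a,a]}v=0$ first) can be bootstrapped from the special case. This is a mild circularity that is resolved by noting that $\nabla_{[a,a]}\langle c,d\rangle=0$ follows directly from the $a=b$ specialization of the computed identity, with no linearity in $v$ needed, and only afterwards does one upgrade to general $v$. Everything else is a routine application of (CC1), (L1), and non-degeneracy. I would also remark that the argument only uses that $L$ has rank one implicitly through the identification in (CC2); the core computation with the pairing works verbatim, and it is only the step "linear in $v$ plus vanishing on a spanning set implies zero" that is cleanest in the line bundle case.
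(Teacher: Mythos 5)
Your proof is correct and follows essentially the same route as the paper: expand $\nabla_a\nabla_b\langle c,d\rangle$ with (CC1), antisymmetrize, apply (L1), and conclude by non-degeneracy together with $\mathcal{C}^\infty$-linearity of the curvature expression in $v$. The only quibble is that the ``mild circularity'' you flag is not actually present: $\nabla_a\nabla_b v-\nabla_b\nabla_a v-\nabla_{[a,b]}v$ is $\mathcal{C}^\infty$-linear in $v$ unconditionally (a direct computation using (L3)); the hypothesis $\nabla_{[a,a]}v=0$ is needed only for $\mathcal{C}^\infty$-linearity in $a$, which this argument never uses, so your extra bootstrapping step is harmless but unnecessary.
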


\begin{lemma}
For any conformal Courant algebroid we have
\begin{eqnarray}
\left[ \nabla s , a \right] &=& 0, \label{ni1} \\
\left[ a , \nabla s \right] &=& \nabla (\nabla_a s) \label{ni2}.
\end{eqnarray}
\begin{proof}
Let $a,b$ be sections of $E$ and $s$ a section of $L$. By (CC2) we have $[\nabla s , a] = \nabla \langle \nabla s , a \rangle - [a , \nabla s]$ and thus
\begin{eqnarray*}
\langle [\nabla s , a] , b \rangle &=& \langle \nabla \langle \nabla s , a \rangle , b \rangle - \langle [a,\nabla s] , b \rangle \\
&=& \nabla_b \langle \nabla s , a \rangle - \langle [a,\nabla s] , b \rangle \\
&=& \nabla_b \nabla_a s - \nabla_a \langle \nabla s , b \rangle + \langle \nabla s , [a,b] \rangle \\
&=& \nabla_b \nabla_a s - \nabla_a \nabla_b s + \nabla_{[a,b]} s \\
&=& 0
\end{eqnarray*}
since $\nabla$ is flat. This proves (\ref{ni1}) by non-degeneracy of $\langle \, , \, \rangle$. Using this and (CC2) we immediately get (\ref{ni2}).
\end{proof}
\end{lemma}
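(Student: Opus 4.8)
The plan is to prove \eqref{ni1} first by pairing both sides against an arbitrary section and reducing everything to the flatness of $\nabla$ established in Lemma \ref{flat}, and then to deduce \eqref{ni2} immediately from axiom (CC2). Throughout I will use the elementary identity $\langle \nabla t , b \rangle = \nabla_b t$ for a section $t$ of $L$ and a section $b$ of $E$, which is just the definition of the operator $\nabla : \Gamma(L) \to \Gamma(E)$ via the identification $E^* \otimes L \cong E$ afforded by the pairing; note also that $\nabla_b t$ is $\mathcal{C}^\infty$-linear in $b$, so pairing against $b$ and invoking non-degeneracy is legitimate.

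First I would apply (CC2) to the pair $\nabla s, a$, giving $[\nabla s , a] = \nabla \langle \nabla s , a \rangle - [a,\nabla s]$. Pairing against an arbitrary section $b$ and using the identity above yields
\[
\langle [\nabla s , a] , b \rangle = \nabla_b \langle \nabla s , a \rangle - \langle [a,\nabla s] , b \rangle = \nabla_b \nabla_a s - \langle [a,\nabla s] , b \rangle .
\]
Next I would rewrite the last term using (CC1) applied to $a, \nabla s, b$: since $\nabla_a \langle \nabla s , b \rangle = \langle [a,\nabla s],b\rangle + \langle \nabla s , [a,b]\rangle$, we get $\langle [a,\nabla s],b\rangle = \nabla_a \nabla_b s - \nabla_{[a,b]} s$. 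Substituting back, $\langle [\nabla s , a] , b \rangle = \nabla_b \nabla_a s - \nabla_a \nabla_b s + \nabla_{[a,b]} s$, which vanishes by Lemma \ref{flat} (being the negative of the curvature expression $\nabla_a \nabla_b s - \nabla_b \nabla_a s - \nabla_{[a,b]} s$). Since $b$ is arbitrary and $\langle \, , \, \rangle$ is non-degenerate, this gives \eqref{ni1}.

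Finally, for \eqref{ni2} I would return to (CC2) for the pair $a, \nabla s$, namely $[a,\nabla s] + [\nabla s , a] = \nabla \langle a , \nabla s \rangle$, and use \eqref{ni1} together with the symmetry of the pairing to conclude $[a,\nabla s] = \nabla \langle \nabla s , a \rangle = \nabla(\nabla_a s)$. I do not anticipate a serious obstacle here: the argument is a short formal manipulation. The only points requiring care are the bookkeeping of the identification $E^* \otimes L \cong E$ (so that $\langle \nabla t, b\rangle = \nabla_b t$) and the sign in recognizing the resulting expression as the vanishing curvature term from Lemma \ref{flat}.
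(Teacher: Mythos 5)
Your proposal is correct and follows essentially the same route as the paper: apply (CC2) to $[\nabla s, a]$, pair against an arbitrary $b$, use the identity $\langle \nabla t, b\rangle = \nabla_b t$ and (CC1) to reduce everything to the vanishing curvature expression from the flatness lemma, then deduce the second identity from (CC2). No gaps.
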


\begin{lemma}
For any section $s$ of $L$ we have $\rho( \nabla s) = 0$.
\begin{proof}
By non-degeneracy of $\langle \, , \, \rangle $ for any point in $M$ we can locally find sections $a,b$ of $E$ such that $s = \langle a , b \rangle$ then $\rho (\nabla s) = \rho( [a,b] + [b,a]) = [\rho(a),\rho(b)] + [\rho(b),\rho(a)] = 0$.
\end{proof}
\end{lemma}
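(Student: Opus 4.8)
The plan is to reduce the claim to the Leibniz compatibility axiom (L3) by writing $s$ locally as a pairing $\langle a , b \rangle$ and then invoking (CC2). First I would fix a point $x \in M$ and use non-degeneracy of $\langle \, , \, \rangle$ to find local sections $a, b$ of $E$ defined on a neighbourhood $U$ of $x$ with $s = \langle a , b \rangle$ on $U$. To do this, pick any local section $a$ with $a_x \neq 0$; non-degeneracy forces the bundle map $\langle a , - \rangle : E \to L$ to be nonzero, hence surjective (as $L$ has rank one), at $x$, and therefore surjective on a neighbourhood of $x$, so a local section $b$ with $\langle a , b \rangle = s$ exists there (explicitly $b = g\sigma$ for a suitable local section $\sigma$ of $E$ and function $g$). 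It is important that this holds on a full neighbourhood rather than merely at the point $x$, since $\nabla$ is a first-order differential operator and so $(\nabla s)_x$ depends on the $1$-jet of $s$ at $x$.

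With this in hand, axiom (CC2) gives $\nabla s = \nabla \langle a , b \rangle = [a,b] + [b,a]$ on $U$. Applying the anchor $\rho$ and using (L3) then yields
\[
\rho(\nabla s) = \rho[a,b] + \rho[b,a] = [\rho(a),\rho(b)] + [\rho(b),\rho(a)] = 0,
\]
the last equality because the Lie bracket of vector fields is skew-symmetric. Thus $\rho(\nabla s)$ vanishes on $U$; since $x$ was arbitrary and $\rho$ is a pointwise bundle map, $\rho(\nabla s) = 0$ on all of $M$.

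The only mildly delicate step is the local factorization $s = \langle a , b \rangle$ in the first paragraph (together with the observation that it is needed on a neighbourhood, not just at a point); after that the argument is a single line combining (CC2) and (L3). I therefore do not anticipate any real obstacle.
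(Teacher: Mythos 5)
Your proof is correct and follows essentially the same route as the paper: locally factor $s = \langle a , b \rangle$, apply (CC2) and then (L3) together with skew-symmetry of the Lie bracket. The extra care you take in justifying that the factorization holds on a whole neighbourhood (not just at a point) is a worthwhile clarification of a step the paper leaves implicit, but it does not change the argument.
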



\subsection{Conformal Courant bracket and Lie $2$-algebra structure}

Let $(E, [ \, , \, ], \rho, \langle \, , \rangle, L , \nabla)$ be a conformal Courant algebroid on $M$. As with ordinary Courant algebroids we can replace the Dorfman bracket $[ \, , \, ]$ by its skew-symmetrization $[ \, , \, ]_C$, namely $[ a , b]_C = \frac{1}{2}( [a,b] - [b,a] )$. We call $[ \, , \, ]_C$ the {\em (conformal) Courant bracket} of $E$. From axiom (CC2) we immediately have $[a,b]_C = [a,b] - \frac{1}{2} \nabla [a,b]$. The price to pay for skew-symmetry is that the Courant bracket does not satisfy the Jacobi identity. However as with Courant algebroids the failure of the Jacobi identity can be neatly collected into the structure of a Lie $2$-algebra. Define an operator $l_3 : \Gamma(E) \otimes \Gamma(E) \otimes \Gamma(E) \to \Gamma(L)$ by
\begin{equation*}
l_3(a,b,c) = \frac{1}{6}( \langle [a,b]_C , c \rangle + \langle [b,c]_C , a \rangle + \langle [c,a]_C , b \rangle ).
\end{equation*}
Then the Jacobi identity is replaced by
\begin{equation*}
[a , [b,c]_C]_C + [b , [c,a]_C ]_C + [c , [a,b]_C ]_C = \nabla l_3(a,b,c).
\end{equation*}
Following \cite{roywei} we put a Lie $2$-algebra structure on the two term complex $\Gamma(L) \buildrel \nabla \over \to \Gamma(E)$. Thought of as an $L_\infty$-algebra with all multilinear brackets beyond the third trivial the structure is as follows. We let $A_1 = \Gamma(E)$ have degree $1$, $A_2 = \Gamma(L)$ degree $2$ and we use the grading convention for $L_\infty$-algebras in which all multilinear brackets $l_k : \Gamma(A_{i_1}) \otimes \dots \otimes \Gamma( A_{i_k}) \to \Gamma(A_{i_1 + \dots + i_k -1})$ have degree $-1$. The non-trivial brackets are determined as follows:
\begin{eqnarray*}
l_1(s) &=& \nabla s, \\
l_2(a,b) &=& [a,b]_C, \\
l_2(a,s) &=& \frac{1}{2} \nabla_a s, \\
l_3(a,b,c) &=& \frac{1}{6}( \langle [a,b]_C , c \rangle + \langle [b,c]_C , a \rangle + \langle [c,a]_C , b \rangle ),
\end{eqnarray*}
where $a,b,c \in \Gamma(E)$, $s \in \Gamma(L)$. The proof that this is an $L_\infty$-algebra is almost identical to the proof in \cite{roywei}.


\subsection{Exact conformal Courant algebroids}\label{ecca}

The notion of exact Courant algebroids is introduced in \cite[letter 1]{sev} along with their classification. Here we extend the definition and classification to conformal Courant algebroids.

\begin{definition}
A conformal Courant algebroid is {\em exact} if the sequence
\begin{equation*}\xymatrix{
0 \ar[r] & T^*M \otimes L \ar[r]^-{\rho^*} & E \ar[r]^-\rho & TM \ar[r] & 0
}
\end{equation*}
is exact. Here $\rho^* : T^*M \otimes L$ denotes the dual $T^*M \to E^*$ of the anchor $\rho : E \to TM$ combined with the identification $E^* \otimes L = E$ given by $\langle \, , \, \rangle$. A {\em splitting} for an exact conformal Courant algebroid is a bundle map $s : TM \to E$ such that $\rho(s(X)) = X$ and $\langle s(X) , s(Y) \rangle = 0$ for all vector fields $X,Y$.
\end{definition}

\begin{lemma}
For an exact conformal Courant algebroid $\nabla_a v = 0$ whenever $\rho(a) = 0$, so there is a uniquely defined operator $\nabla' : \Gamma(L) \to \Gamma(T^*M \otimes L)$ such that $\nabla_a v = \nabla'_{\rho(a)} v$. Moreover $\nabla'$ is a flat connection.
\begin{proof}
Let $m \in M$ and $a \in E_m$ be such that $\rho(a) = 0$. By non degeneracy of $\langle \, , \, \rangle$ and exactness we can find sections $b,c$ of $E$ such that $a = \nabla \langle b , c \rangle (m)$. Now if $v$ is a section of $L$ we get $\nabla_{\nabla \langle b , c \rangle} v = \nabla_{[b,c] + [c,b]} v = 0$ since $\nabla_{[e,e]}v = 0$ for all sections $e$ of $E$. Evaluating at $m$ we see that $(\nabla_a v) (m) = 0$ as required. Thus we may define an operator $\nabla' : \Gamma(L) \to \Gamma(T^*M \otimes L)$ such that $\nabla_{a} = \nabla_{\rho(a)}$. It follows immediately that $\nabla'$ is a flat connection.
\end{proof}
\end{lemma}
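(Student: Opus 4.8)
The plan is to establish the three claims in order: that $\nabla_a v$ vanishes whenever $\rho(a)$ does, that this forces $\nabla$ to factor through an operator $\nabla'$ on $TM$, and that $\nabla'$ is a flat linear connection on $L$.

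For the first and crucial claim, I would fix a point $m\in M$ and $a\in E_m$ with $\rho(a)=0$. By exactness, $a$ lies in the image of $\rho^*$, so it suffices to show that every element of $(\ker\rho)_m$ has the form $(\nabla g)(m)$ for some local section $g$ of $L$. Choosing local coordinates $x^i$ centred at $m$ and a local nowhere-vanishing section $s_0$ of $L$, one reads off from $\langle\nabla(x^i s_0),b\rangle=\rho(b)(x^i)\,s_0+x^i\nabla_b s_0$ that $\nabla(x^i s_0)(m)=\rho^*(dx^i\otimes s_0)(m)$; since the $dx^i$ span $T^*_m M$ and $\rho^*$ is injective, these elements span $(\ker\rho)_m$, so any $a\in(\ker\rho)_m$ is $(\nabla g)(m)$ for a suitable combination $g$. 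By non-degeneracy of $\langle\,,\,\rangle$ we may further write $g=\langle b,c\rangle$ locally, and then (CC2) gives $\nabla g=[b,c]+[c,b]$. Polarizing, $[b,c]+[c,b]=[b+c,b+c]-[b,b]-[c,c]$ is a sum of self-brackets, and since $\nabla$ is flat (Lemma~\ref{flat}) we have $\nabla_{[e,e]}v=0$ for every section $e$ of $E$; hence $\nabla_{\nabla g}v=0$ identically, and evaluating at $m$ yields $(\nabla_a v)(m)=0$.

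The descent is then immediate: if $\rho(a)=\rho(a')$ then $\nabla_{a-a'}v=0$, so $\nabla_a v$ depends only on $X=\rho(a)$, and since $\rho$ is surjective we may set $\nabla'_X v:=\nabla_a v$ for any lift $a$ of $X$. Because $E$-connections are $\mathcal{C}^\infty$-linear in the $E$-argument, $\nabla'$ is $\mathcal{C}^\infty$-linear in $X$, and the Leibniz rule $\nabla'_X(fv)=X(f)v+f\nabla'_X v$ is inherited from $\nabla_a(fv)=\rho(a)(f)v+f\nabla_a v$; thus $\nabla'$ is a genuine linear connection on $L$. Finally, for flatness, pick vector fields $X,Y$ and lifts $a,b\in\Gamma(E)$ with $\rho(a)=X$, $\rho(b)=Y$. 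Then $\nabla'_X\nabla'_Y v=\nabla_a\nabla_b v$, and by (L3) $\rho[a,b]=[X,Y]$, so $\nabla'_{[X,Y]}v=\nabla_{[a,b]}v$; hence the curvature of $\nabla'$ equals $\nabla_a\nabla_b v-\nabla_b\nabla_a v-\nabla_{[a,b]}v$, which vanishes by Lemma~\ref{flat}. I expect the only genuinely non-routine ingredient to be the local identification of $(\ker\rho)_m$ with the pointwise image of $\nabla$ applied to sections of $L$; the remainder is bookkeeping with the $E$-connection and anchor axioms together with the already-established flatness of $\nabla$.
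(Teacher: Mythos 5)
Your proof is correct and follows essentially the same route as the paper: realize $a\in(\ker\rho)_m$ as $(\nabla\langle b,c\rangle)(m)$, apply (CC2), and use $\nabla_{[e,e]}v=0$ from Lemma~\ref{flat}, with the descent to $\nabla'$ and its flatness then being formal. Your local-coordinate verification that $\nabla(x^i s_0)(m)=\rho^*(dx^i\otimes s_0)(m)$ spans $(\ker\rho)_m$ is a welcome expansion of a step the paper leaves implicit, but it is the same argument.
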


By abuse of notation we will use $\nabla$ to denote the flat connection defined in the above Lemma for exact conformal Courant algebroids.

\begin{lemma}
Every exact conformal Courant algebroid admits a splitting.
\begin{proof}
First we show that the image of $T^*M \otimes L$ in $E$ is isotropic. For any $m \in M$ and $\xi,\eta \in (T^*M \otimes L)_m$ choose sections $f,g$ of $L$ such that $\nabla f(m) = \xi$, $\nabla g(m) = \eta$. Then $\langle \nabla f , \nabla g \rangle = \nabla_{\nabla f} (g) = 0$ since $\rho( \nabla f) = 0$.

We have that $T^*M \otimes L$ is an isotropic subspace of $E$ and by exactness the dimension of $T^*M \otimes L$ is exactly half the dimension of $E$. Thus $T^*M \otimes L$ is a maximal isotropic.\\

Now choose any map $t : TM \to E$ such that $\rho \circ t = {\rm id}$. Such a map exists because exact sequences of smooth vector bundle can always be split. In general $\langle t(X) , t(Y) \rangle$ will be a non-trivial $L$-valued symmetric form on $TM$. Observe that if $\xi \in T^*M \otimes L$ then $\langle t(X) , \xi \rangle = \xi(X)$ (because we use the dual of the anchor $\rho$ to identify $T^*M \otimes L$ as a subbundle of $E$). For any bundle map $\phi : TM \to T^*M \otimes L$ we may define a new section $s : TM \to E$ given by $s(X) = t(X) + \phi(X)$. One finds that $\langle s(X) , s(Y) \rangle = \langle t(X) , t(Y) \rangle + \phi(X)(Y) + \phi(Y)(X)$. It follows that on suitable choice of $\phi$ the image $s(TM)$ of $TM$ under $s$ is isotropic so that $s$ is a splitting.
\end{proof}
\end{lemma}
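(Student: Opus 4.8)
The plan is to imitate the familiar ``completing the square'' argument from the exact Courant algebroid case: start from an arbitrary smooth right splitting of the underlying short exact sequence of vector bundles and then correct it so that its image becomes isotropic. Since the sequence with maps $\rho^*$ and $\rho$ is an exact sequence of \emph{smooth} vector bundles, it admits a bundle map $t : TM \to E$ with $\rho \circ t = {\rm id}_{TM}$ (choose a fibre metric on $E$ and take the orthogonal complement of the image of $\rho^*$). The only failure of $t$ to be a splitting in the required sense is that $g(X,Y) := \langle t(X), t(Y)\rangle$ need not vanish; note $g$ is a smooth symmetric $L$-valued bilinear form on $TM$, hence defines a smooth bundle map $g^\flat : TM \to T^*M \otimes L$, $X \mapsto g(X,-)$.

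The correction uses two observations about how $T^*M \otimes L$ sits inside $E$ via $\rho^*$. First, $T^*M \otimes L$ is isotropic: every element of $(T^*M \otimes L)_m$ has the form $\nabla f(m)$ for a local section $f$ of $L$ (using that $\nabla : \Gamma(L) \to \Gamma(E)$ lands in $\ker\rho = {\rm im}\,\rho^*$ by the lemma $\rho(\nabla s)=0$ together with exactness, and that a connection on a line bundle realises every covector at a point), and then $\langle \nabla f, \nabla g\rangle = \nabla_{\nabla g} f = \nabla'_{\rho(\nabla g)} f = 0$ since $\rho(\nabla g)=0$. Second, unwinding the definition of $\rho^*$ as the transpose of $\rho$ composed with the identification $E^*\otimes L \cong E$ furnished by $\langle\,,\,\rangle$, one gets $\langle a, \xi\rangle = \xi(\rho(a))$ for $\xi \in T^*M\otimes L \subset E$ and any $a \in E$; in particular $\langle t(X), \xi\rangle = \xi(X)$. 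Granting these, set $s(X) = t(X) - \tfrac{1}{2}g^\flat(X)$; then $\rho(s(X)) = X$, and
\[
\langle s(X), s(Y)\rangle = \langle t(X), t(Y)\rangle - \tfrac{1}{2}\langle t(X), g^\flat(Y)\rangle - \tfrac{1}{2}\langle g^\flat(X), t(Y)\rangle = g(X,Y) - \tfrac{1}{2}g(Y,X) - \tfrac{1}{2}g(X,Y),
\]
where the remaining term $\langle g^\flat(X), g^\flat(Y)\rangle$ dropped out by isotropy, and the right-hand side vanishes by symmetry of $g$. Thus $s$ is a splitting.

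The bilinear expansion and the symmetry cancellation in the last display are entirely routine. The step requiring genuine care is the second observation of the middle paragraph: the precise bookkeeping of the embedding $\rho^* : T^*M\otimes L \hookrightarrow E$ and the musical identification $E \cong E^*\otimes L$, since getting the pairing formula $\langle t(X),\xi\rangle = \xi(X)$ exactly right (including the correct placement of the $L$-factor) is what guarantees the correction term lands where it should. A secondary point worth spelling out is that every vector in $(T^*M\otimes L)_m$ really does arise as $\nabla f(m)$ for some local $f$; this is immediate once one writes $\nabla f = \rho^*(\nabla' f)$ for the induced flat connection $\nabla'$ on $L$ and prescribes $f(m)$ and the first-order behaviour of $f$ at $m$ freely.
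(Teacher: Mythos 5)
Your proof is correct and follows essentially the same route as the paper: establish isotropy of $\rho^*(T^*M\otimes L)$ via $\langle \nabla f,\nabla g\rangle=\nabla_{\nabla g}f=0$, verify $\langle t(X),\xi\rangle=\xi(X)$, and correct an arbitrary right splitting $t$ by a bundle map into $T^*M\otimes L$. The only difference is cosmetic: you make the paper's ``suitable choice of $\phi$'' explicit as $\phi=-\tfrac{1}{2}g^\flat$.
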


Let us now fix a choice of splitting $s : TM \to E$ for an exact conformal Courant algebroid. This fixes an identification of $E$ with $TM \oplus (T^*M \otimes L)$ with pairing
\begin{equation}\label{pair}
\langle X+\xi , Y + \eta \rangle = i_X \eta + i_Y \xi
\end{equation}
where $X,Y$ are valued in $TM$ and $\xi,\eta$ valued in $T^*M \otimes L$.

\begin{proposition}
Let $(E,L)$ be an exact conformal Courant algebroid. After choosing a splitting $s : TM \to E$ and using the identification $E \simeq TM \oplus (T^*M \otimes L)$ corresponding to $s$ there exists an $L$-valued $3$-form $H \in \Gamma( \wedge^3 T^*M \otimes L)$ such that $d_{\nabla} H = 0$ and
\begin{equation}\label{cdb}
[ X + \xi , Y + \eta] = [X,Y]_{{\rm Lie}} + \mathcal{L}^{\nabla}_X \eta - i_Y d_{\nabla} \xi + i_Y i_X H
\end{equation}
where $[X,Y]_{{\rm Lie}}$ is the Lie bracket of vector fields and $\mathcal{L}^{\nabla}_X = i_X d_{\nabla} + d_{\nabla} i_X$. Conversely for any choice of line bundle $L$ with flat connection $\nabla$ and $d_{\nabla}$-closed $L$-valued $3$-form $H$ we get an exact conformal Courant algebroid $TM \oplus (T^*M \otimes L)$ with pairing $\langle \, , \, \rangle$ given by (\ref{pair}) and Dorfman bracket given by (\ref{cdb}).
\begin{proof}
First let us remark that since $\nabla$ is flat the familiar Cartan relations hold, that is $[\mathcal{L}_X^{\nabla} , i_Y] = i_{[X,Y]_{{\rm Lie}}}$, $[\mathcal{L}_X^{\nabla} , d_{\nabla} ] = 0$ and by definition $\mathcal{L}_X^{\nabla} = i_X d_{\nabla} + d_{\nabla} i_X$. To see this simply observe that the result needs only be checked locally where we can use a constant section of $L$ to identify $\nabla$ with the trivial connection.\\

Let $X,Y,Z$ be sections of $TM$ and $\xi,\eta$ sections of $T^*M \otimes L$. Since $\rho [ X , \eta ] = 0$ we have that $[X , \eta]$ is valued in $T^*M \otimes L$ so to determine an expression for $[X,\eta]$ it suffices to evaluate $\langle [X, \eta] , Z \rangle$. One finds
\begin{eqnarray*}
\langle [X , \eta] , Z \rangle &=& \nabla_X \langle \eta , Z \rangle - \langle \eta , [X,Z] \rangle \\
&=& \nabla_X (i_Z \eta) - i_{[X,Z]_{{\rm Lie}}} \eta \\
&=& i_X d_{\nabla} i_Z \eta - i_{[X,Z]_{{\rm Lie}}} \eta \\
&=& \mathcal{L}_X^{\nabla} i_Z \eta - i_{[X,Z]_{{\rm Lie}}} \eta \\
&=& i_Z \mathcal{L}_X^{\nabla} \eta.
\end{eqnarray*}
Thus $[X,\eta] = \mathcal{L}_X^{\nabla} \eta$. Now using $[X,\eta] + [\eta, X] = \nabla \langle X , \eta \rangle$ we immediately obtain $[\xi , Y] = -i_Y d_{\nabla} \xi$.

We have $\rho[ \xi , \eta] = 0$ so as above to evaluate $[\xi,\eta]$ it suffices to consider $\langle [\xi,\eta] , Z \rangle$. We have
\begin{eqnarray*}
\langle [\xi,\eta] , Z \rangle &=& \nabla_\xi \langle \eta , Z \rangle - \langle \eta , [\xi,Z] \rangle \\
&=& \langle \eta , i_Z d_{\nabla} \xi \rangle \\
&=& 0
\end{eqnarray*}
so $[\xi , \eta ] =0$. Finally we must consider $[X,Y]$. Since $\rho[X,Y] = [\rho(X),\rho(Y)]_{{\rm Lie}}$ we find that $[X,Y] = [X,Y]_{{\rm Lie}} + H(X,Y)$ for some bundle map $H : TM \otimes TM \to T^*M \otimes L$. Clearly $H(X,Y) = -H(Y,X)$. Using $\langle [X,Y] , Z \rangle + \langle Y , [X,Z] \rangle = 0$ we further see that $\langle H(X,Y) , Z \rangle + \langle Y , H(X,Z) \rangle = 0$. Thus if we think of $H$ as valued in $T^*M \otimes T^*M \otimes T^*M \otimes L$ then $H$ is skew-symmetric, thus a section of $\wedge^3 T^*M \otimes L$. After expanding the Leibniz identity $[X,[Y,Z]] = [[X,Y],Z] + [Y,[X,Z]]$ and simplifying one finds $d_{\nabla} H = 0$.\\

Conversely if $H$ is a $d_{\nabla}$-closed $L$-valued $3$-form and we define a bracket on $TM \oplus (T^*M \otimes L)$ by (\ref{cdb}) then it is a matter of straightforward computation to show it satisfies the axioms for a conformal Courant algebroid.
\end{proof}
\end{proposition}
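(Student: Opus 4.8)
The plan is to establish the two directions of the proposition separately, with the forward direction (existence of $H$) occupying most of the work. First I would fix the splitting $s$ and use the resulting decomposition $E \simeq TM \oplus (T^*M \otimes L)$, under which the pairing is forced to take the form (\ref{pair}) by the definition of a splitting together with the fact, established in the preceding lemma, that $T^*M \otimes L$ is a maximal isotropic embedded via $\rho^*$. The strategy for computing the Dorfman bracket is to evaluate it on each of the four types of pairs of homogeneous elements — $[X,\eta]$, $[\xi,Y]$, $[\xi,\eta]$, and $[X,Y]$ — using the anchor axiom (L3) to control the $TM$-component and the compatibility axioms (CC1), (CC2) together with non-degeneracy of $\langle\,,\,\rangle$ to pin down the $(T^*M\otimes L)$-component.

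The key computational device is the Cartan calculus for the flat connection $\nabla$ on $L$: since $\nabla$ is flat (Lemma \ref{flat}), working locally in a flat frame reduces $d_\nabla$, $i_X$, $\mathcal{L}^\nabla_X$ to their ordinary de Rham counterparts, so the relations $[\mathcal{L}^\nabla_X, i_Y] = i_{[X,Y]_{\rm Lie}}$, $[\mathcal{L}^\nabla_X, d_\nabla] = 0$, and $\mathcal{L}^\nabla_X = i_X d_\nabla + d_\nabla i_X$ all hold. Using (CC1) in the form $\nabla_X\langle\eta,Z\rangle = \langle[X,\eta],Z\rangle + \langle\eta,[X,Z]\rangle$ and the fact that $\rho[X,\eta]=0$ forces $[X,\eta]$ into $T^*M\otimes L$, a short manipulation gives $[X,\eta] = \mathcal{L}^\nabla_X\eta$; then (CC2) yields $[\xi,Y] = -i_Y d_\nabla\xi$, and another application of (CC1) plus $[\xi,Z]=-i_Z d_\nabla\xi$ gives $[\xi,\eta]=0$. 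For $[X,Y]$, axiom (L3) forces $[X,Y] = [X,Y]_{\rm Lie} + H(X,Y)$ with $H(X,Y)\in T^*M\otimes L$; skew-symmetry of $H$ in all three slots follows from skew-symmetry in $X,Y$ together with the identity $\langle[X,Y],Z\rangle + \langle Y,[X,Z]\rangle = 0$ (a consequence of (CC1) applied with $\langle Y,Z\rangle$ a local function pulled back, i.e. using $\rho(Y)$ kills it, or more directly from the isotropy of the splitting), so $H$ is a genuine $L$-valued $3$-form.

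The main obstacle, and the step I expect to require the most care, is extracting $d_\nabla H = 0$ from the Leibniz identity (L1) applied to the triple $X, Y, Z$ of vector fields. One must expand $[X,[Y,Z]] = [[X,Y],Z] + [Y,[X,Z]]$ using (\ref{cdb}); the $TM$-components reproduce the Jacobi identity for the Lie bracket and cancel, while the $(T^*M\otimes L)$-components produce, after using the Cartan relations to reorganize the $\mathcal{L}^\nabla$ and $i$ terms, exactly the expression $(d_\nabla H)(X,Y,Z,\cdot)$ up to sign and combinatorial factors. Keeping track of signs and of the interaction between $\mathcal{L}^\nabla_X H(Y,Z)$ terms and $H([X,Y]_{\rm Lie},Z)$ terms is where the computation is genuinely delicate; the payoff is the closure condition. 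For the converse direction I would simply note that each axiom (L1), (L2), (CC1), (CC2) for the bracket (\ref{cdb}) and pairing (\ref{pair}) reduces, via the same Cartan calculus, to a polynomial identity in $d_\nabla$, $i$, $\mathcal{L}^\nabla$ and the closed form $H$, all of which hold by direct expansion — with (L1) again being the only nontrivial check and using $d_\nabla H = 0$ in precisely the reverse of the forward computation. Exactness of $0 \to T^*M\otimes L \to TM\oplus(T^*M\otimes L) \to TM \to 0$ is immediate from the construction.
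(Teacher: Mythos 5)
Your proposal follows essentially the same route as the paper: establish the Cartan calculus for the flat connection by a local check, determine $[X,\eta]$, $[\xi,Y]$, $[\xi,\eta]$ from (CC1), (CC2) and non-degeneracy, identify $H$ as the $(T^*M\otimes L)$-component of $[X,Y]$ with skew-symmetry coming from the isotropy of the splitting via (CC1), and extract $d_\nabla H=0$ from the Leibniz identity. The steps, the axioms invoked at each stage, and the treatment of the converse all match the paper's argument.
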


\begin{definition} Let $L$ be a line bundle with flat connection. The bundle $E = TM \oplus (T^*M \otimes L)$ with pairing $E \otimes E \to L$ given by (\ref{pair}) will the called the {\em $L$-twisted generalized tangent bundle} of $M$. If we wish to emphasize the dependence of $E$ on $L$ we will write $E_L$. If $L$ corresponds to a class $\epsilon \in H^1(M,\mathbb{R}^*)$ we may also write $E_\epsilon$ and call it the {\em $\epsilon$-twisted generalized tangent bundle}.
\end{definition}

\begin{definition} Let $L$ be a line bundle with flat connection and let $H$ be a closed $L$-valued $3$-form on $M$. Then the $L$-twisted generalized tangent bundle $E_L$ admits the structure of a conformal Courant algebroid with Dorfman bracket determined by $H$ as in Equation (\ref{cdb}). We call this Dorfman bracket the {\em $H$-twisted Dorfman bracket} on $E_L$. We denote the bracket by $[ \, , \, ]$ or by $[ \, , \, ]_H$ to emphasize the dependence on $H$.
\end{definition}

In the case $L = \mathbb{R}$ with trivial connection the $L$-twisted generalized tangent bundle $E = TM \oplus T^*M$ will also called the {\em generalized tangent bundle} and if $H$ is a closed $3$-form then the $H$-twisted Dorfman bracket gives $E$ the structure of a Courant algebroid. The $H$-twisted Courant bracket in this case can be found in \cite{sw}.

\begin{proposition}
If $s : TM \to E$ is a splitting for an exact conformal Courant algebroid $(E,L)$ then any other splitting $t : TM \to E$ has the form $t(X) = s(X) + i_X B$ where $B \in \Gamma( \wedge^2 T^*M \otimes L)$ is an $L$-valued $2$-form. Under such a change the $3$-form $H$ changes to $H + d_{\nabla}B$.
\begin{proof}
That any other splitting $t$ has the given form is straightforward. Now if $H$ is defined by $[s(X),s(Y)] = s([X,Y]_{{\rm Lie}}) + i_X i_Y H$ and $H'$ defined by $[t(X),t(Y)] = t([X,Y]_{{\rm Lie}}) + i_X i_Y H'$ then we find easily that $H' = H + d_{\nabla} B$.
\end{proof}
\end{proposition}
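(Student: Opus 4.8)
The plan is to prove both assertions by direct computation, using the explicit form of the Dorfman bracket from Equation (\ref{cdb}) together with the Cartan relations for $d_\nabla$, $i_X$ and $\mathcal{L}_X^\nabla$ established at the start of the previous proof. These relations hold because flatness of $\nabla$ lets us pass to a local constant trivialization of $L$, where everything reduces to ordinary Cartan calculus.

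First I would verify the shape of an arbitrary splitting. Any two bundle maps $s, t : TM \to E$ with $\rho \circ s = \rho \circ t = \mathrm{id}$ differ by a map $TM \to \ker\rho = T^*M \otimes L$, so $t(X) = s(X) + \beta(X)$ for some $\beta : TM \to T^*M \otimes L$; thinking of $\beta$ as a section of $T^*M \otimes T^*M \otimes L$ we may write $\beta(X) = i_X C$. Now impose that $t$ is isotropic: using the pairing (\ref{pair}) and $\langle s(X), i_YC\rangle = (i_YC)(X) = C(Y,X)$, one finds
\begin{equation*}
\langle t(X), t(Y)\rangle = \langle s(X), s(Y)\rangle + C(Y,X) + C(X,Y) = C(Y,X) + C(X,Y),
\end{equation*}
since $s$ is already isotropic. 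This vanishes for all $X,Y$ precisely when $C$ is skew, i.e. $C = B \in \Gamma(\wedge^2 T^*M \otimes L)$, giving $t(X) = s(X) + i_X B$ as claimed.

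Next I would compute how $H$ transforms. By definition $H$ and $H'$ are the unique $L$-valued $3$-forms with $[s(X),s(Y)] = s([X,Y]_{\mathrm{Lie}}) + i_Y i_X H$ and $[t(X),t(Y)] = t([X,Y]_{\mathrm{Lie}}) + i_Y i_X H'$. Substituting $t(X) = s(X) + i_X B$ and expanding the left side with (\ref{cdb}) — the cross terms $[s(X), i_Y B]$ and $[i_X B, s(Y)]$ are $\mathcal{L}_X^\nabla i_Y B$ and $-i_Y d_\nabla i_X B$ respectively, and $[i_XB, i_YB] = 0$ — then collecting the $TM$-components (which reproduce $[X,Y]_{\mathrm{Lie}}$, consistent with $t([X,Y]_{\mathrm{Lie}}) = s([X,Y]_{\mathrm{Lie}}) + i_{[X,Y]_{\mathrm{Lie}}}B$) and the $T^*M\otimes L$-components, one reads off
\begin{equation*}
i_Y i_X H' = i_Y i_X H + \mathcal{L}_X^\nabla i_Y B - i_Y d_\nabla i_X B - i_{[X,Y]_{\mathrm{Lie}}} B.
\end{equation*}
The Cartan identities $[\mathcal{L}_X^\nabla, i_Y] = i_{[X,Y]_{\mathrm{Lie}}}$ and $\mathcal{L}_X^\nabla = i_X d_\nabla + d_\nabla i_X$ turn the three correction terms into $i_Y i_X d_\nabla B$, so $H' = H + d_\nabla B$.

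I do not expect a genuine obstacle here — this is the conformal analogue of Ševera's classical computation for exact Courant algebroids, and the only thing to be careful about is bookkeeping: keeping the $L$-coefficients straight and getting the sign conventions in (\ref{cdb}) and in the contraction order $i_Y i_X H$ versus $i_X i_Y H$ to match. The mildly delicate point, if any, is justifying that $d_\nabla$ obeys the Cartan calculus despite $\nabla$ being only flat rather than trivial, but that was already dispatched in the preceding proposition by working locally in a flat frame, so I would simply invoke it.
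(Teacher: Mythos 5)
Your proposal is correct and follows the same route the paper intends — the paper's proof is just a one-line assertion that the computation is straightforward, and you have carried out exactly that computation (isotropy forcing skew-symmetry of the difference, then expanding the bracket cross terms and applying the Cartan identities for $d_\nabla$). The only caveat, which you already flagged, is the contraction-order discrepancy between $i_Xi_YH$ in the proposition statement and $i_Yi_XH$ in Equation (\ref{cdb}); this is an internal sign inconsistency of the paper and does not affect the conclusion since the same convention is applied to $H$ and $H'$.
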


\begin{proposition}\label{class}
Isomorphism classes of exact conformal Courant algebroids on $M$ are in bijection with pairs $(\epsilon,h)$ where $\epsilon \in H^1(M,\mathbb{R}^*)$ determines an isomorphism class of line bundle $L$ with flat connection and $h \in H^3(M,L)$ is degree $3$-class in de Rham cohomology twisted by $L$ modulo the equivalence relation $(\epsilon,h) \sim (\epsilon,h')$ where $h' = ch$ for some constant $c \in \mathbb{R}^*$.
\begin{proof}
The only thing that needs clarification is the equivalence relation $(\epsilon,h) \sim (\epsilon,h')$ where $h' = ch$ for some constant $c \in \mathbb{R}^*$. We can think of $c$ as a bundle endomorphism $c : L \to L$ that preserves the flat connection on $L$. Conversely any isomorphism of $L$ that preserves a connection on $L$ must be a constant.
\end{proof}
\end{proposition}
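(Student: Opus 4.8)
The plan is to assemble the classification from the ingredients already developed in this section. The key point is that the earlier propositions give, for any exact conformal Courant algebroid $(E,L)$, a flat line bundle $L$ (equivalently a class $\epsilon \in H^1(M,\mathbb{R}^*)$) together with, after a choice of splitting, a $d_\nabla$-closed $L$-valued $3$-form $H$; and changing the splitting replaces $H$ by $H + d_\nabla B$. So the splitting-independent invariant is the twisted de Rham class $h = [H] \in H^3(M,L)$. This produces a well-defined map from isomorphism classes (with splitting forgotten) to pairs $(L,H)$, hence to pairs $(\epsilon,h)$. First I would verify that this map descends to isomorphism classes of conformal Courant algebroids: an isomorphism $(E,L) \to (E',L')$ restricts to an isomorphism of the flat line bundles $L \cong L'$, hence induces equality of the associated classes in $H^1(M,\mathbb{R}^*)$, and transports one $H$ to the other up to the $d_\nabla$-exact ambiguity coming from the splittings, so the pair $(\epsilon,[H])$ is an invariant.

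Next I would construct the inverse map: given $\epsilon$ choose a representative flat line bundle $L$, given $h$ choose a closed representative $H$, and form $E_L = TM \oplus (T^*M \otimes L)$ with pairing (\ref{pair}) and $H$-twisted Dorfman bracket (\ref{cdb}); the converse direction of the preceding proposition guarantees this is an exact conformal Courant algebroid. I would then check this is well-defined up to isomorphism: a different closed representative $H' = H + d_\nabla B$ yields an isomorphic conformal Courant algebroid via the splitting change $t(X) = s(X) + i_X B$ (the proposition on change of splitting), and a different choice of flat line bundle in the class $\epsilon$ is related by a flat-connection-preserving isomorphism, which transports everything compatibly. Composing the two maps in both orders gives the identity on the relevant sets, establishing the bijection — except that the $H$-to-$cH$ rescaling must still be accounted for.

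The only genuinely substantive point, as the statement itself flags, is the equivalence relation $(\epsilon,h) \sim (\epsilon,ch)$ for $c \in \mathbb{R}^*$. I would argue that rescaling the pairing $\langle\,,\,\rangle \mapsto c\langle\,,\,\rangle$, or equivalently composing with the bundle automorphism $c : L \to L$, is an isomorphism of conformal Courant algebroids that preserves the flat connection on $L$ (scalar multiplication commutes with $\nabla$) and multiplies $H$ by $c$; conversely, any bundle isomorphism of $L$ preserving the flat connection is locally constant, hence a global constant on each component (assuming $M$ connected, or constant on components in general), so these are precisely the extra identifications not already seen as "honest" isomorphisms fixing $L$ pointwise. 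This is the step I expect to need the most care, since it requires being precise about what data an isomorphism of conformal Courant algebroids is allowed to move — in particular that it may act nontrivially on $L$ — and matching that freedom exactly to the scalar $c$. Everything else is bookkeeping with results already proved above, so I would keep those verifications brief and concentrate the exposition on this rescaling point, essentially as in the author's short proof.
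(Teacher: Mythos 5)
Your proposal is correct and takes essentially the same route as the paper: the surjectivity and well-definedness are assembled from the preceding propositions on splittings and the change-of-splitting formula $H \mapsto H + d_\nabla B$, and the only substantive point is identifying the residual equivalence $(\epsilon,h)\sim(\epsilon,ch)$ with flat-connection-preserving automorphisms of $L$, which (as you and the paper both note) are exactly the nonzero constants.
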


If we take $L$ to be the trivial line bundle $M \times \mathbb{R}$ with trivial connection and take $H = 0$ then $E = TM \oplus T^*M$ with bracket (\ref{cdb}) will be called the {\em standard conformal Courant algebroid} on $TM \oplus T^*M$. In this case since $L$ is the trivial line bundle $E$ is actually a Courant algebroid and will also be called the {\em standard Courant algebroid}. It follows from Proposition \ref{class} that in a contractible neighborhood $U \subset M$ there is up to isomorphism only one exact conformal Courant algebroid, namely $TM \oplus T^*M$ with the standard Dorfman bracket.\\

Let $(E,L)$ be any exact conformal Courant algebroid on $M$. Then we can find an open cover $\{ U_\alpha \}$ of $M$ and isomorphisms $\phi_\alpha : (TM\oplus T^*M,1)|_{U_\alpha} \to (E,L)|_{U_\alpha}$ where $(TM \oplus T^*M , 1)$ denotes the standard Courant algebroid. Note that it makes sense to restrict a conformal Courant algebroid to an open subset as the Dorfman bracket is seen to be a differential operator\footnote{Note however that this is not true for general Leibniz algebroids, that is there are Leibniz algebroids with Dorfman bracket that is not a differential operator.}. On double intersections $U_{\alpha \beta} = U_\alpha \cap U_\beta$ we have that $g_{\alpha \beta} = \phi_\alpha^{-1} \circ \phi_\beta$ is an automorphism of the standard conformal Courant algebroid $(TM \oplus T^*M,1)|_{U_{\alpha \beta}}$ on $U_{\alpha \beta}$ and that the $\{ g_{\alpha \beta} \}$ forms a $1$-cocycle with values in the (sheaf of) automorphisms of the standard conformal Courant algebroid. Note that by automorphism we mean an automorphism covering the identity on $M$. It is well known that such automorphisms of $TM \oplus T^*M$ as a Courant algebroid are given by closed $2$-forms \cite{gual}, namely if $B$ is a closed $2$-form then $B$ defines a transformation $e^B(X+\xi) = X + \xi + i_X B$ of $TM \oplus T^*M$ preserving the Courant bracket. However as a {\em conformal Courant algebroid} $TM \oplus T^*M$ admits a slightly bigger group of automorphisms, namely for any $c \in \mathbb{R}^*$ we have a conformal shift $(X,\xi) \mapsto (X,c\, \xi)$ which is easily seen to preserve the Dorfman bracket but only preserves the conformal class of $\langle \, , \, \rangle$. Together $B$-shifts and conformal shifts combine to give the full group of automorphisms which is a semi-direct product $\mathbb{R}^* \ltimes \Omega^2_{{\rm cl}}(M)$ where $\Omega^2_{{\rm cl}}(M)$ denotes the space of closed $2$-forms which is a group under addition and $\mathbb{R}^*$ acts by multiplication. It is not hard to see that the \v{C}ech cohomology class of a $1$-cocycle $\{ g_{\alpha \beta} \}$ with values in the sheaf $\mathbb{R}^* \ltimes \Omega^2_{{\rm cl}}(M)$ corresponds to a pair $(\alpha,h)$ modulo equivalence $(\epsilon,h) \sim (\epsilon,h')$ exactly as specified in Proposition \ref{class}. Of course this is exactly as it should be since equivalence classes of such $1$-cocycles $\{ g_{\alpha \beta} \}$ correspond isomorphism classes of exact conformal Courant algebroids.


\subsection{Twisted cohomology}\label{twco}

Let $(E,L)$ be an exact conformal Courant algebroid on $M$. Choose a splitting $s : TM \to E$ so that $E$ is identified with $TM \oplus (T^*M \otimes L)$ with pairing (\ref{pair}) and Dorfman bracket \ref{cdb} defined by a $d_\nabla$-closed $L$-valued $3$-form $H$. We define $S_L^i(M) = \bigoplus_{j \in \mathbb{Z}} L^j \otimes \wedge^{i+2j}T^*M$. There is a natural action $\gamma : E \otimes S^i_L(M) \to S^{i-1}_L(M)$ of $E$ given by
\begin{equation*}
\gamma_{X+\xi}  \omega = i_X \omega + \xi \wedge \omega.
\end{equation*}
The action similar to that of a Clifford module but differs in that $\langle \, , \, \rangle$ is $L$-valued rather than $\mathbb{R}$-valued. There is also a natural action $L \otimes S^i_L(M) \to S^{i-2}_L(M)$ which commutes with $\gamma$ and satisfies $\gamma_a \gamma_b + \gamma_b \gamma_a = \langle a , b \rangle$.\\

We define a differential $d_{\nabla,H} : \Gamma(S^i_L(M)) \to \Gamma(S^{i+1}_L(M))$ by $d_{\nabla,H} \omega = d_\nabla \omega + H \wedge \omega$. We observe immediately that $d_{\nabla , H}^2 = 0$ and so defines cohomology groups which we denote by $H^{i,H}_L(M)$. If one changes the splitting $s$ by an $L$-valued $2$-form $B$ then $H$ changes to $H + d_\nabla B$, but one easily observes that $e^{-B} \circ \, d_{\nabla,H} \circ \, e^{B} =  d_{\nabla,H+d_\nabla B}$ where $e^B$ acts on $S^i_L(M)$ by $e^B \omega = \omega + B \wedge \omega + \frac{1}{2} B \wedge B \wedge \omega + \dots $, thus there is an induced isomorphism $e^{-B} : H^{i,H}_L(M) \to H^{i,H+d_\nabla B}_L(M)$. This shows that the cohomology groups $H^{i,H}_L(M)$ are up to isomorphism independent of the choice of splitting and are thus associated only to the isomorphism class of the conformal Courant algebroid $(E,L)$. In fact for any two splittings $s,s' : TM \to E$ there is a {\em unique} $2$-form $B$ relating the splittings and thus a canonical isomorphism $e^{-B}$ between the twisted cohomology groups. We will call $H^{i,H}_L(M)$ the {\em twisted cohomology} associated to the conformal courant algebroid $(E,L)$.\\

For a given section $a$ of $E$ let us define an operator $\mathcal{L}_a : \Gamma(S^i_L(M)) \to \Gamma(S^i_L(M))$ by the usual expression $\mathcal{L}_a(\omega) = \gamma_a (d_{\nabla,H}\omega) + d_{\nabla,H}( \gamma_a \omega)$.
\begin{proposition}
We have the following identities
\begin{eqnarray}
\left[\mathcal{L}_a , d_{\nabla,H}\right] &=& 0, \label{id1} \\
\left[\mathcal{L}_a , \gamma_b\right] &=& \gamma_{[a,b]_H}, \label{id2} \\
\left[\mathcal{L}_a , \mathcal{L}_b \right] &=& \mathcal{L}_{[a,b]_H}. \label{id3}
\end{eqnarray}
\begin{proof}
Equation (\ref{id1}) follows from the definition of $\mathcal{L}_a$ and the fact that $d_{\nabla,H}^2 = 0$. For (\ref{id2}) we first find that if $a = X + \xi$ and $\omega \in \Gamma(S^i_L(M))$ then $\mathcal{L}_a \omega = \mathcal{L}_X \omega + d\xi \wedge \omega + (i_X H) \wedge \omega$. From this the calculation is straightforward. Equation (\ref{id3}) follows immediately using (\ref{id1}) and (\ref{id2}) applied to $[\mathcal{L}_a , \mathcal{L}_b] = [\mathcal{L}_a , [d_{\nabla,H},\gamma_b]]$, where $[d,\gamma_b]$ denotes the anti-commutator $\gamma_b d_{\nabla,H} + d_{\nabla,H} \gamma_b$.
\end{proof}
\end{proposition}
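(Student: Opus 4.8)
The plan is to verify the three identities in the order given, exploiting the fact that each one reduces, via the definition of $\mathcal{L}_a$ as a graded commutator $[d_{\nabla,H}, \gamma_a]$, to a purely algebraic manipulation of graded (anti)commutators together with the single nontrivial input $d_{\nabla,H}^2 = 0$ and the Clifford-type relation $\gamma_a \gamma_b + \gamma_b \gamma_a = \langle a,b\rangle$. Throughout I would keep track of parities: $d_{\nabla,H}$ and $\gamma_a$ are odd operators on $\bigoplus_i \Gamma(S^i_L(M))$, while $\mathcal{L}_a$ is even, so all brackets should be interpreted in the graded sense and the graded Jacobi identity is available.

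First I would prove (\ref{id1}). Since $\mathcal{L}_a = d_{\nabla,H}\gamma_a + \gamma_a d_{\nabla,H}$, we get $\mathcal{L}_a d_{\nabla,H} = d_{\nabla,H}\gamma_a d_{\nabla,H}$ (using $d_{\nabla,H}^2=0$) and likewise $d_{\nabla,H}\mathcal{L}_a = d_{\nabla,H}\gamma_a d_{\nabla,H}$, so the two agree and $[\mathcal{L}_a, d_{\nabla,H}] = 0$. Next, for (\ref{id2}) the cleanest route is to first establish the explicit formula $\mathcal{L}_a \omega = \mathcal{L}_X^\nabla \omega + d_\nabla\xi \wedge \omega + (i_X H)\wedge \omega$ for $a = X + \xi$, by unwinding $\gamma_a(d_\nabla\omega + H\wedge\omega) + d_\nabla(\gamma_a\omega) + H\wedge\gamma_a\omega$ using $\gamma_a\omega = i_X\omega + \xi\wedge\omega$, the Leibniz rule for $d_\nabla$ over the wedge product, and the Cartan identity $\mathcal{L}_X^\nabla = i_X d_\nabla + d_\nabla i_X$; the two terms $i_X(H\wedge\omega)$ and $H\wedge i_X\omega$ combine to leave $(i_X H)\wedge\omega$. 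With this formula in hand, $[\mathcal{L}_a, \gamma_b]\omega$ for $b = Y + \eta$ is computed directly: the $\mathcal{L}_X^\nabla$ part contributes $i_{[X,Y]_{\rm Lie}}\omega + (\mathcal{L}_X^\nabla\eta)\wedge\omega$ via the Cartan relation $[\mathcal{L}_X^\nabla, i_Y] = i_{[X,Y]_{\rm Lie}}$ and the Leibniz rule, the $d_\nabla\xi\wedge(-)$ part contributes $-i_Y d_\nabla\xi \wedge \omega$ after commuting past $i_Y$, and the $(i_X H)\wedge(-)$ part contributes $(i_Y i_X H)\wedge\omega$; comparing with $\gamma_{[a,b]_H}\omega$ and the formula (\ref{cdb}) for the twisted Dorfman bracket gives exactly the claim. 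Finally (\ref{id3}) follows formally: write $\mathcal{L}_b = [d_{\nabla,H}, \gamma_b]$, apply the graded Jacobi identity to $[\mathcal{L}_a, [d_{\nabla,H}, \gamma_b]]$, use (\ref{id1}) to kill the term involving $[\mathcal{L}_a, d_{\nabla,H}]$, and use (\ref{id2}) on the remaining term $[d_{\nabla,H}, [\mathcal{L}_a, \gamma_b]] = [d_{\nabla,H}, \gamma_{[a,b]_H}] = \mathcal{L}_{[a,b]_H}$.

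The main obstacle is the bookkeeping in the derivation of the formula for $\mathcal{L}_a$ and in the direct computation of $[\mathcal{L}_a, \gamma_b]$: one must be careful that $d_\nabla$ really does obey the Leibniz rule over $\wedge$ with the correct signs on the mixed factor $S^i_L(M) = \bigoplus_j L^j \otimes \wedge^{i+2j}T^*M$, that the identification $E^* \otimes L \cong E$ is applied consistently so that $\xi \wedge \omega$ lands in the right summand, and that the Cartan relations for $(\mathcal{L}_X^\nabla, i_Y, d_\nabla)$ — valid because $\nabla$ is flat and hence locally trivial — are invoked correctly. Once the explicit expression for $\mathcal{L}_a$ is pinned down, the rest is routine; I would therefore spend most of the care on that step and treat (\ref{id1}) and (\ref{id3}) as short formal arguments.
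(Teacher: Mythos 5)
Your proposal is correct and follows essentially the same route as the paper: (\ref{id1}) from $d_{\nabla,H}^2=0$, (\ref{id2}) via the explicit formula $\mathcal{L}_a\omega = \mathcal{L}^\nabla_X\omega + d_\nabla\xi\wedge\omega + (i_XH)\wedge\omega$ followed by a direct comparison with (\ref{cdb}), and (\ref{id3}) by the graded Jacobi identity applied to $[\mathcal{L}_a,[d_{\nabla,H},\gamma_b]]$. No substantive differences.
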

As $a$ runs over the sections of $E$ the operators $\mathcal{L}_a , \gamma_a$ together with $d_{\nabla,H}$ generate a graded Lie algebra and the Dorfman bracket can be thought of as a derived bracket in the sense that 
\begin{equation}\label{db}
\gamma_{[a,b]_H} = [ [d_{\nabla,H} , \gamma_a] , \gamma_b ]
\end{equation}
for any sections $a,b$ of $TM \oplus L \otimes T^*M$.\\

If $L$ is the trivial line bundle with trivial connection then we see immediately that $H^{i+2k,H}_L(M) = H^{i,H}_L(M)$ so the twisted cohomology is $2$-periodic and in fact coincides with the usual definition of twisted cohomology twisted by a closed $3$-form. If $L$ is not trivial but is orthogonal, so it is associated to a class in $H^1(M,\mathbb{Z}_2)$ then $L^2$ is trivial as a line bundle with connection and it follows that $H^{i+4k,H}_L(M) = H^{i,H}_L(M)$, so twisted cohomology is $4$-periodic in this case. It is for this case that the theory of conformal Courant algebroids and twisted cohomology will make contact with twisted $KR$-theory.\\

The differential $d_{\nabla,H}$ preserves the filtration 
\begin{equation*}
F^p( S^n_L(M)) = \bigoplus_{j \ge (p-n)/2} L^j \otimes \wedge^{n+2j} T^*M
\end{equation*}
and so there is an associated spectral sequence $(E_r^{p,q},d_r)$ which abuts to the twisted cohomology $H^{n,H}_L(M)$. One easily sees that $E_3^{p,q} = 0$ if $q$ is odd and $E_3^{p,q} = H^p(M,L^{q/2})$ if $q$ is even, that is degree $p$ de Rham cohomology twisted by $L^{q/2}$. The differential $d_3$ is given by the wedge product $[H]\wedge : H^p(M,L^{q/2}) \to H^{p+3}(M,L^{q/2 + 1})$.\\

For application to $T$-duality we will want to consider a further twist of de Rham cohomology. Let $\alpha \in H^1(M,\mathbb{R}^*)$ and represent $\alpha$ as a line bundle with flat connection $(A , \nabla^\alpha)$. We define $S^{i,A}_L(M) = \bigoplus_{j \in \mathbb{Z}} A \otimes L^j \otimes \wedge^{i+2j} T^*M$ and differential $d_{\nabla,H,A}$ given by combining the previously defined differential $d_{\nabla,H}$ with the flat connection $\nabla^\alpha$ in the usual fashion. The relations (\ref{id1})-(\ref{id3}) still apply in this situation. We let $H^{i,A,H}_L(M)$ or $H^{i,\alpha,H}_L(M)$ denote the corresponding cohomology groups. We observe that there are canonical isomorphisms $H^{i,A,H}_L(M) = H^{i+2,A \otimes L,H}_L(M)$. In particular twisting by $A = L^*$ corresponds to a degree shift by two. Finally note that the filtration and spectral sequence extend to the more general case with $E_3^{p,q} = 0$ if $q$ is odd and $E_3^{p,q} = H^p(M,A \otimes L^{q/2})$ if $q$ is even.


\subsection{Twisting by a gerbe}\label{twg}

In a well known construction \cite{gual} an abelian gerbe with connection and curving can be used to twist the standard Courant algebroid $TM \oplus T^*M$ into an exact Courant algebroid such that the isomorphism class of the Courant algebroid in $H^3(M,\mathbb{R})$ is integral, indeed it is the image of the Diximier-Douady class of the gerbe in real cohomology. A similar construction using a slightly more general type of gerbe will allow us to twist $TM \oplus T^*M$ to a conformal Courant algebroid with class $(\epsilon,h)$ where $\epsilon \in H^1(M,\mathbb{Z}_2) \subseteq H^1(M,\mathbb{R}^*)$ and $h$ lies in the image $H^3(M,\mathbb{Z}_\epsilon) \to H^3(M,\mathbb{R}_\epsilon)$. The type of gerbe we use correspond to the crossed module $({\rm U}(1) \to \mathbb{Z}_2)$, where $\mathbb{Z}_2$ acts on ${\rm U}(1)$ by inversion while the homomorphism ${\rm U}(1) \to \mathbb{Z}_2$ is trivial. For a given class $\epsilon \in H^1(M,\mathbb{Z}_2)$ such gerbes will be called {\em $\epsilon$-twisted (bundle) gerbes} or {\em $\epsilon$-twisted graded (bundle) gerbes} if it is equipped with a grading as discussed below. These gerbes can also be understood as a special case of Jandl gerbes \cite{fnsw} where the involution is free, except we also allow for a grading. We understand such gerbes and their extension to the orbifold setting to be the objects which can be used to twist $KR$-theory.\\

To expedite the construction we will consider only $\epsilon$-twisted gerbes defined with respect to a good open cover $\mathcal{U} = \{ U_i \}$ of $M$. Since any gerbe is stably isomorphic to one defined with respect to $\mathcal{U}$ this is not a serious restriction. By a good cover we mean that the multiple intersections $U_{i_1  i_2  \dots  i_k} = U_{i_1} \cap U_{i_2} \cap \dots \cap U_{i_k}$ are contractible. This is not necessary but prevents us from having to take subcovers later.

To define $\epsilon$-twisted gerbes with respect to $\mathcal{U}$ we must first choose a $\mathbb{Z}_2$-valued cocycle $\{ \epsilon_{ij} \}$ representing $\epsilon$. The definition of $\epsilon$-twisted bundle gerbes depends on the choice of representative $\{ \epsilon_{ij} \}$, but it is straightforward to related the definitions for different choices of representative. To specify an $\epsilon$-twisted gerbe with respect to $\mathcal{U}$ we must give a collection of Hermitian line bundles $\{L_{ij}\}$ with $L_{ij}$ defined over $U_{ij}$. We say the gerbe is {\em graded} if in addition the $L_{ij}$ are given a $\mathbb{Z}_2$-grading by specifying a $\mathbb{Z}_2$-valued cocycle $\{ \alpha_{ij} \}$ It turns out that this additional structure is irrelevant to the construction of the conformal Courant algebroid, so we ignore it for the time being. The $\{ \alpha_{ij} \}$ will re-appear when we consider twisted cohomology. The structure of the gerbe is given by a product
\begin{equation*}
\theta_{ijk} : L_{ij}^{\epsilon_{jk}} \otimes L_{jk} \to L_{ik}
\end{equation*}
where $L_{ij}^{1} = L_{ij}$ and $L_{ij}^{-1} = L_{ij}^*$. The product $\theta$ is an isomorphism of Hermitian line bundles for any $i,j,k$ with $U_{ijk}$ non-empty. The multiplication is required to satisfy an associativity condition. Rather than write this out in full we will pass to a \v{C}ech description. Since $\mathcal{U}$ is taken to be a good cover we can find sections $\sigma_{ij} : U_{ij} \to L_{ij}$ of unit norm. Then we get ${\rm U}(1)$-valued functions $g_{ijk} : U_{ijk} \to {\rm U}(1)$ defined by $\theta_{ijk}(\sigma_{ij}^{\epsilon_{jk}} , \sigma_{jk}) = g_{ijk} \sigma_{ik}$ where $\sigma_{ij}^{\epsilon_{jk}}$ is the section of $L_{ij}^{\epsilon_{jk}}$ corresponding to $\sigma_{ij}$. The associativity condition translates into a twisted cocycle condition for the $g_{ijk}$:
\begin{equation*}
g_{ijl} g_{jkl} = \epsilon_{kl}( g_{ijk} ) g_{ikl}
\end{equation*}
where $\mathbb{Z}_2$ acts on ${\rm U}(1)$ by letting the non-trivial element of $\mathbb{Z}_2$ act as inversion. The class $\{ g_{ijk} \}$ is a \v{C}ech $2$-cocycle defining a class in $ \linebreak H^2(M , \mathcal{C}^{\infty}({\rm U}(1)_{\epsilon} ) )$ where $\mathcal{C}^{\infty}({\rm U}(1)_{\epsilon} )$ is the sheaf obtained from $\mathcal{C}^{\infty}({\rm U}(1) )$ by twisting by the cocycle $\epsilon = [\{ \epsilon_{ij} \}]$. As in the untwisted case the coboundary $\delta : H^2(M , \mathcal{C}^{\infty}({\rm U}(1)_{\epsilon} ) ) \to H^3(M , \mathbb{Z}_\epsilon )$ is an isomorphism, where $\mathbb{Z}_\epsilon$ is the local system with $\mathbb{Z}$-coefficients determined by $\epsilon$. Thus associated to such a gerbe $\mathcal{G} = (\{\theta_{ijk}\} , \{L_{ij}\})$ is a class $DD(\mathcal{G}) \in H^3(M,\mathbb{Z}_\epsilon)$ which is the equivalent of the Diximier-Douady class and determines the stable isomorphism class of $\mathcal{G}$. If we allow a grading $\alpha_{ij}$ then the triple $\mathcal{G} = (\{\theta_{ijk}\},\{L_{ij}\},\{\alpha_{ij}\})$ is classified by the pair $([\alpha] , DD(\mathcal{G}) ) \in H^1(M,\mathbb{Z}_2) \oplus H^3(M,\mathbb{Z}_\epsilon) $.\\

Next we equip $\mathcal{G}$ with a {\em connection}, that is each $L_{ij}$ is equipped with a connection $\nabla_{ij}$ such that the multiplication $\theta_{ijk}$ is constant. Define $1$-forms $A_{ij}$ by $\nabla_{ij}(\sigma_{ij}) = A_{ij} \sigma_{ij}$. Then from the definition of $g_{ijk}$ we obtain by differentiating the following: 
\begin{equation*}
\epsilon_{jk} A_{ij} + A_{jk} = A_{ik} + d {\rm log}( g_{ijk} ).
\end{equation*}
Let $F_{ij} = dA_{ij}$ be the curvature of $\nabla_{ij}$ then
\begin{equation*}
\epsilon_{jk} F_{ij} + F_{jk} = F_{ik}.
\end{equation*}
By the usual sort of argument for fine sheaves we know that there exists $2$-forms $B_i$ such that
\begin{equation}\label{feq}
F_{ij} = B_j - \epsilon_{ij} B_i.
\end{equation}
A choice of such $\{ B_i \}$ is called a {\em curving} for the connection. Now set $H_i = dB_i$. We find
\begin{equation*}
H_j = \epsilon_{ij} H_i
\end{equation*}
so $\{ H_i \}$ defines a class in $H^3(M,\mathbb{R}_\epsilon)$, called the {\em curvature} of the curving. Up to some universal constant depending on our conventions we have that the class of $H$ represents this image of $DD(\mathcal{G})$ in $H^3(M,\mathbb{R}_\epsilon)$.\\

Now we explain how the gerbe $\mathcal{G}$ along with connection $\{ \nabla_{ij} \}$ and curving $\{ B_i \}$ can be used to twist the standard Courant algebroid $(TM \oplus T^*M , 1)$ into an exact conformal Courant algebroid. We patch together $(TM \oplus T^*M)|_{U_j}$ to $(TM \oplus T^*M)|_{U_i}$ by the following transition relation on the overlap $U_{ij}$
\begin{eqnarray*}
X_i &=& X_j, \\
\xi_i &=& \epsilon_{ij}( \xi_j + i_{X_j} F_{ij} ).
\end{eqnarray*}
These transitions are by automorphisms of the standard Courant algebroid $TM \oplus T^*M$ so we obtain an exact conformal Courant algebroid $(E,\mathbb{R}_\epsilon)$, where $\mathbb{R}_\epsilon$ is the flat line bundle associated to $\epsilon$. But now by (\ref{feq}) we immediately obtain the relation
\begin{equation*}
\xi_i + i_{X_i}B_i = \epsilon_{ij}( \xi_j + i_{X_j} B_j ).
\end{equation*}
Thus the choice of $\{ B_i \}$ determines an isomorphism $\phi : E \to TM \oplus (T^*M \otimes \mathbb{R}_\epsilon)$. To work out what the bracket on $E$ is under the identification $\phi$ we need only note that $X_i + (\xi_i + i_{X_i}B_i) = e^{B_i}(X_i + \xi_i)$, so if $[ \, , \, ]$ is the standard bracket on $TM \oplus T^*M$ then the twisted bracket over $U_i$ is given by $e^{-B_i}[ e^{B_i}(X+\xi) , e^{B_i}(Y+\eta)] = [X+\xi , Y+\eta] + i_Y i_X dB_i$, so the bracket we get identifies with the the $H_i$-twisted bracket on $TM \oplus (T^*M \otimes \mathbb{R}_\epsilon)$.\\

Suppose now our gerbe $\mathcal{G} = (\{\theta_{ijk}\},\{L_{ij}\})$ with connection and curving is further equipped with a grading $\{ \alpha_{ij} \}$. We can use the data of $\mathcal{G}$ with connection, curving and grading to twist the bundles $\wedge^{ev / odd} T^*M$ of even and odd forms. More specifically for $r \in \mathbb{Z}$ we define a twisted bundle $S^r(\mathcal{G})$ of forms as follows. Over each open set $U_i$ we identify $S^r(\mathcal{G})$ with either the bundle $\wedge^{ev} T^*M |_{U_i}$ of even forms if $r$ is even or the bundle $\wedge^{odd} T^*M |_{U_i}$ of odd forms if $r$ is odd. A section of $S^r(\mathcal{G})$ is given by a collection $\{ \omega_i \}$ where $\omega_i \in \Omega^{ev/odd}(U_i)$ is an even or odd form on $U_i$ according to the parity of $r$ and on the overlap $U_{ij}$ we have the relation $\omega_i =\epsilon_{ij} \circ ( \alpha_{ij} e^{-F_{ij}} \omega_j)$, where if $\mu$ has degree $d \in r + 2\mathbb{Z}$ then $1 \circ \mu = \mu$ and $(-1) \circ \mu = (-1)^{(r-d)/2} \mu$. 

Using (\ref{feq}) we immediately obtain the identity $e^{-B_i}\omega_i = \epsilon_{ij} \circ ( \alpha_{ij} e^{-B_j} \omega_j)$. Thus the collection of forms $\{ e^{-B_i} \omega_i \}$ patch together to form a section of $S^{r,A}_L(M) =  \bigoplus_{k \in \mathbb{Z}} A \otimes L^k \otimes \wedge^{r+2k} T^*M$ where $A$ is the flat line bundle corresponding to $\{ \alpha_{ij} \}$ and $L$ the flat line bundle corresponding to $\{ \epsilon_{ij} \}$. Thus a choice of connection and grading for $\mathcal{G}$ let us define the bundles $S^r(\mathcal{G})$ and a choice of curving for the connection yields an isomorphism $S^r(\mathcal{G}) \to S^{r,A}_L(M)$. The bundles $S^r(\mathcal{G})$ come with some additional structure which under this isomorphism $S^r(\mathcal{G}) \to S^{r,A}_L(M)$ coincides with the structures $\gamma, d_{\nabla,H,A}$ defined in Section \ref{twco}. First we define a Clifford action $E \otimes S^r(\mathcal{G}) \to S^{r-1}(\mathcal{G})$ as follows. Over each open subset $U_i$ of the cover we can identify $S^r(\mathcal{G})$ with the bundle of even or odd forms and $E$ with $TM \oplus T^*M$ and we take $\gamma$ to be defined by $\gamma_{X+\xi} \omega = i_X \omega + \xi \wedge \omega$ in $U_i$. One checks easily that this action coincides on the overlaps $U_{ij}$ and under the identifications $E = TM \oplus (T^*M \otimes L)$, $S^r(\mathcal{G}) = S^{r,A}_L(M)$ we see that $\gamma$ coincides with the definition in Section \ref{twco}. Similarly there is a differential operator $D : \Gamma( S^r(\mathcal{G})) \to \Gamma( S^{r+1}(\mathcal{G}))$. Over an open set $U_i$ we identify $S^r(\mathcal{G})$ with the bundle of even or odd forms as usual and define $D$ to be the exterior derivative. One sees easily that the definition of $D$ agrees on overlaps $U_{ij}$ so that $D$ is well-defined. Under the identifications $S^r(\mathcal{G}) = S^{r,A}_L(M)$ we see that $D$ is sent to the operator $d_{\nabla,H,A}$ of Section \ref{twco}.


\subsection{Dirac structures}

For a Courant algebroid a Dirac structure is a maximal isotropic subbundle such that the space of sections is closed under the Dorfman bracket (or equivalently the Courant bracket). Since the notion of a maximal isotropic subspace depends only on the conformal class of the pairing this notion immediately extends to conformal Courant algebroids.
\begin{definition} An {\em almost Dirac structure} for a conformal Courant algebroid $E$ is a maximal isotropic subbundle $D \subset E$. An almost Dirac structure $D$ is called {\em integrable} and said to be a {\em Dirac structure} if the space of sections of $D$ is closed under the Dorfman bracket.
\end{definition}

This definition appears to have some overlap with the conformal Dirac structures of \cite{wad}, although the exact relation is unclear. From the above definition it immediately follows that a Dirac structure equipped with the restriction of the Dorfman bracket and anchor has the structure of a Lie algebroid.\\

In the case of the standard Courant algebroid $E = TM \oplus T^*M$ on a manifold $M$ Dirac structures provide a simultaneous generalization of pre-symplectic structures (closed $2$-forms) and Poisson structures. Taking a closed $3$-form $H$ and using the $H$-twisted Dorfman bracket on $E$ we get the corresponding $H$-twisted notions. Now suppose $L$ is a line bundle with flat connection $\nabla$ and consider $E = TM \oplus (T^*M \otimes L)$ with the standard conformal Courant bracket of Equation (\ref{cdb}) with $H = 0$. We consider two kinds of Dirac structures generalizing the pre-symplectic and Poisson cases when $L$ is trivial.

\begin{example} Suppose $D$ is a graph of an $L$-valued $2$-form $\omega \in \Omega^2(M,L)$, that is $D = \{ X + i_X \omega \, | \, X \in TM \}$. Integrability of $D$ is easily seen to be the requirement that $\omega$ is $d_{\nabla}$-closed. As a special case suppose that $L$ is orthogonal so corresponds to a class $\epsilon \in H^1(M,\mathbb{Z}_2)$. Let $M_\epsilon \to M$ be the double cover and $\sigma$ the corresponding involution. Then a $d_\nabla$-closed $L$-valued $2$-form $\omega$ on $M$ is the same as a closed $2$-form $\omega'$ on $M_\epsilon$ such that $\sigma^* \omega' = -\omega'$. If $\omega'$ is non-degenerate then we have a symplectic structure on $M_\epsilon$ such that $\sigma$ is an anti-symplectic involution.
\end{example}

\begin{example} Suppose $D$ is a graph of an $L^{-1}$-valued bivector $\beta \in \Gamma( L^{-1} \otimes \wedge^2 TM)$, that is $D = \{ \beta(\xi) + \xi \, | \, \xi \in L \otimes T^*M \}$. From $L$ we get a graded algebra $\mathcal{A} = \bigoplus_{i=0}^\infty \mathcal{C}^\infty(M,(L^*)^i )$. Notice that this is the algebra of functions on the total space of $L$ which are polynomial in the fibres. Define a bracket $\{ \, , \, \} : \mathcal{A} \otimes \mathcal{A} \to \mathcal{A}$ by $\{f,g\} = \beta( \nabla f , \nabla g)$ and observe that $\{ \, , \, \}$ has degree $1$. Since integrability is a local condition it follows from the case where $L$ is trivial that $D$ is integrable if and only if $\{ \, , \, \}$ satisfies the Jacobi identity and thus makes $\mathcal{A}$ into a Poisson algebra. We see that $\beta$ determines a Poisson structure on the total space of $L$ which is linear in the fibres.
\end{example}


\subsection{Extension to orientifolds}\label{eto}

The case of an exact conformal Courant algebroid where the flat line bundle $L$ is orthogonal admits a generalization relevant to orientifolds. First note that if $L$ is a flat orthogonal line bundle on $M$ then $L$ corresponds to a class $\epsilon \in H^1(M,\mathbb{Z}_2)$ and determines a double cover $M_\epsilon \to M$ and involution $\sigma : M_\epsilon \to M_\epsilon$. A closed $L$-valued $3$-form on $M$ amounts to a closed $3$-form $H \in \Omega^3(M_\epsilon)$ on $M_\epsilon$ such that $\sigma^* H = -H$. An immediate generalization is to consider a manifold $X$ with involution $\sigma : X \to X$ without assuming that $\sigma$ is fixed-point free. Suppose we have a closed $3$-form $H \in \Omega^3(X)$ such that $\sigma^* H = -H$. We now consider the Courant algebroid $E = TX \oplus T^*X$ equipped with the $H$-twisted Dorfman bracket. We can lift $\sigma$ to an involution $\tilde{\sigma} : E \to E$ given by $\tilde{\sigma}( X , \xi ) = (\sigma_* X , - \sigma^* \xi )$. We observe that $\tilde{\sigma}$ preserves the $H$-twisted Dorfman bracket on $E$ but only respects the natural pairing $\langle \, , \, \rangle$ up to sign. The space of sections $A = \Gamma(E)^{\tilde{\sigma}}$ of $E$ invariant under $\tilde{\sigma}$ has the structure of a Leibniz algebra\footnote{A {\em Leibniz algebra} is a vector space $A$ with bilinear map $[ \, , \, ] : A \otimes A \to A$ satisfying the Leibniz identity $[a,[b,c]] = [[a,b],c] + [b,[a,c]]$.} with bracket given by the $H$-twisted Dorfman bracket. If $\sigma$ has fixed points then $A$ is not the space of sections of a vector bundle on $X/\sigma$, so $A$ is not a Leibniz algebroid, but it is at least a sheaf of Leibniz algebras. We can regard $A$ as the natural generalization of conformal Courant algebroids to the case of general involutions.\\

A slightly more general contruction applies to orientifolds. By an {\em orientifold}\footnote{This is really only the definition for a global quotient orientifold. More general definitions are possible.} we mean a global quotient groupoid $X // G$ where $X$ is a smooth manifold and $G$ is a discrete group which acts faithfully and properly on $X$. This is distinguished from orbifolds by also specifying a homomorphism $\phi : G \to \mathbb{Z}_2$. In the string theory elements of $g$ such that $\phi(g) = 1 \in \mathbb{Z}/2\mathbb{Z} \simeq \mathbb{Z}_2$ are accompanied by the world-sheet parity reversal operator $\Omega$. Suppose that $H$ is a closed $3$-form on $X$ such that for all $g \in G$ we have $g^* H = (-1)^{\phi(g)} H$. We give $E = TX \oplus T^*X$ the Courant algebroid structure with $H$-twisted Dorfman bracket. Next each $g \in G$ lifts to a bundle morphism $\tilde{g} : E \to E$ given by $\tilde{g}(X,\xi) = (g_* X , (-1)^{\phi(g)}(g^{-1})^* \xi)$. It is clear that the space $A = \Gamma(E)^G$ of $G$-invariant sections of $E$ form a Leibniz algebra which we regard as the orientifold generalization of exact Conformal Courant algebroids. We claim that the structure of such Leibniz algebras can be used to describe T-duality of orientifolds. In Section \ref{tdot} we pursue this claim for the case where $G = \mathbb{Z}_2$, $\phi$ is the identity and the involution acts freely so that $X = M_\epsilon$ is a double cover $M_\epsilon \to M$ for $M = X/\mathbb{Z}_2$ and $\epsilon$ is some element of $H^1(M,\mathbb{Z}_2)$.


\section{Topological T-duality with orientifold twists}\label{tdot}


\subsection{Topological T-duality}\label{ttd}

T-duality is a duality between spaces $X,\hat{X}$ which are torus bundles over a common base. The spaces $X,\hat{X}$ are equipped with some additional structure $h,\hat{h}$ which we refer to as {\em fluxes}. In the simplest form the fluxes are gerbes and roughly speaking the duality between $(X,h)$ and $(\hat{X},\hat{h})$ is a relation between the Chern class on one side and the flux on the other side \cite{bem},\cite{bunksch},\cite{bar}. The T-duality relation can be neatly captured as an isomorphism of Courant algebroids associated to $(X,h)$ and $(\hat{X},\hat{h})$ \cite[Chapter 8]{gual},\cite[Chapter 7]{cav}. We will consider extending this T-duality isomorphism to the case of conformal Courant algebroids. Demanding an isomorphism of conformal Courant algebroids leads us to formulate a definition of topological T-duality in this more general setting. 

We have already described in Section \ref{eto} how the structure of Conformal Courant algebroids leads naturally towards orientifolds. Our T-duality isomorphism for conformal Courant algebroids can be understood as a first step towards T-duality for orientifolds. Thus we are considering T-duality for orientifolds of the particularly simple form $X = X_\epsilon / / \mathbb{Z}_2$, that is we have a manifold $X$ which is a circle bundle $\pi : X \to M$ over a base $M$ and a class $\epsilon \in H^1(M,\mathbb{Z}_2)$. We then take $X_\epsilon$ to be the associated double cover with natural $\mathbb{Z}_2$-action and take the groupoid $X_\epsilon // \mathbb{Z}_2$ as our orientifold. Despite the simplicity of this setup this already allows us to extend topological T-duality into new territory and leads us to propose a T-duality isomorphism in twisted $KR$-theory. For simplicity we choose to restrict to the case of T-duality for circle bundles but it is certainly possible to extend this to higher rank torus bundles.\\

To express T-duality we first introduce the notion of an orientifold background. The idea is that an orientifold background captures the topological data required for an orientifold string theory background, at least in the simple case where the orientifold is defined by a double cover $\epsilon \in H^1(X,\mathbb{Z}_2)$.

\begin{definition}
An {\em orientifold background} $(X,\epsilon,\mathcal{G})$ consists of a smooth manifold $X$, a class $\epsilon \in H^1(X,\mathbb{Z}_2)$ called the {\em orientifold class} and a graded $\epsilon$-twisted gerbe $\mathcal{G}$ on $X$. Alternatively if $\mathcal{G}$ is classified by the pair $(\alpha,h) \in H^1(X,\mathbb{Z}_2) \times H^3(X,\mathbb{Z}_\epsilon)$ we will equally consider $(X,\epsilon,(\alpha,h))$ to define an orientifold background, even though $(\alpha,h)$ only determines a gerbe $\mathcal{G}$ up to isomorphism.
\end{definition}

The T-duality relation involves orientifold backgrounds which are torus bundles. For simplicity we will consider only the case of circle bundles and capture this structure in the notion of a T-duality background.
\begin{definition}
An orientifold background $(X,\epsilon',(\alpha',h))$ is said to be a {\em T-duality background} if $X$ is a circle bundle $\pi : X \to M$ over a smooth manifold $M$ and in addition there are classes $\epsilon,\alpha \in H^1(M,\mathbb{Z}_2)$ such that $\epsilon' = \pi^*(\epsilon)$, $\alpha' = \pi^*(\alpha)$. We write $(\pi : X \to M, \epsilon , (a , h))$ or simply $(X,\pi,\epsilon,(\alpha,h))$ to denote such a T-duality background.
\end{definition}

We could extend this definition to higher rank torus bundles. Extending T-duality in the higher rank setting is conceptually straightforward though there are some additional difficulties. We will address the higher rank case for $\epsilon = 0$ in \cite{bar1}.\\

Note that since the fibres of a circle bundle $\pi : X \to M$ are connected the pull-back map $\pi^* : H^1(M,\mathbb{Z}_2) \to H^1(X,\mathbb{Z}_2)$ is injective so if classes $\epsilon,\alpha$ as in the above definition exist, they are uniquely determined by $\epsilon',\alpha'$. We remark that there is a natural notion of isomorphism of T-duality backgrounds. For the purposes of T-duality we will consider the base $M$ to be fixed so that two T-duality backgrounds $(\pi : X \to M,\epsilon,(\alpha,h))$ and $(\pi' : X' \to M, \epsilon',(\alpha',h'))$ are isomorphic if there is a bundle isomorphism $\phi : X \to X'$ covering the identity such that $\phi^*(h') = h$ and in addition $\epsilon' = \epsilon, \alpha' = \alpha$.\\

Before we can formulate a definition of topological T-duality for T-duality backgrounds we need first to recall the classification of circle bundles which are not necessarily oriented \cite{bar}. We recall that circle bundles on $M$ are in bijection with principal ${\rm O}(2)$-bundles. Given a principal ${\rm O}(2)$-bundle $P \to M$ over $M$ we can form the associated circle bundle $X = P \times_{{\rm O}(2)} S^1$ by having ${\rm O}(2)$ act on the circle in the standard way. Alternatively $X$ is the quotient $P/\mathbb{Z}_2$ where the action of $\mathbb{Z}_2$ is through a subgroup $\mathbb{Z}_2 \to {\rm O}(2)$ generated by a reflection. It is not hard to see that every circle bundle over $M$ arises in this way. Suppose that $\pi : X \to M$ is a circle bundle over $M$. The bundle of fibre orientations of $X$ determines a double cover of $M$ and thus a class $\xi = w_1(X) \in H^1(M,\mathbb{Z}_2)$ which we call the {\em Stiefel-Whitney class} of $X$. If $M$ and $X$ are smooth, which is the case of interest to us then $\xi$ is the first Stiefel-Whitney class of a line bundle $V$ on $M$ with the property that $\pi^*(V)$ is isomorphic to the vertical bundle ${\rm Ker}(\pi_*) : TX \to TM$. The double cover $p_\xi : X_\xi \to X$ determined by $\xi$ can be given the structure of a principal ${\rm O}(2)$-bundle over $M$ and we recover $X$ as the $\mathbb{Z}_2$-quotient $X = X_\xi/\mathbb{Z}_2$.\\

In addition to the Stiefel-Whitney class $\xi = w_1(X)$ there is another cohomology class $c = c_1(X) \in H^2(M,\mathbb{Z}_\xi)$ called the {\em twisted Chern class} \cite{bar}, which takes values in the local system $\mathbb{Z}_\xi$ with $\mathbb{Z}$-coefficients determined by $\xi$. Strictly speaking the twisted Chern class takes values in a local system $S$ which is isomorphic to $\mathbb{Z}_\xi$, but not canonically. There is not a unique isomorphism $S \to \mathbb{Z}_\xi$ because $\mathbb{Z}_\xi$ has a non-trivial automorphism which acts by negation. Thus the twisted Chern class as an element of $H^2(M,\mathbb{Z}_\xi)$ is really only defined up to an overall sign. This isn't really a problem because one finds that the isomorphism classes of circle bundles on $M$ are in bijection with pairs $(\xi,c)$ where $\xi \in H^1(M,\mathbb{Z}_2)$, $c \in H^2(M,\mathbb{Z}_\xi)$ modulo the equivalence $(\xi,c) \sim (\xi,-c)$ \cite{bar}.\\

For a circle bundle $\pi : X \to M$ with Stiefel-Whitney class $\xi$ and twisted Chern class $c$ there is an exact sequence relating the cohomology of $X$ and $M$, namely the {\em Gysin sequence} \cite{bar}
\begin{eqnarray}
\cdots \to H^k(M,G) \buildrel \pi^* \over \to H^k(X,G) \buildrel \pi_* \over \to & \label{gys} \\ 
H^{k-1}(M,G \otimes \mathbb{Z}_\xi) & \buildrel \smallsmile c \over \to H^{k+1}(M,G) \to \cdots \nonumber
\end{eqnarray}
where $G$ is an abelian group or more generally a local system of abelian groups on the base $M$. The map $\pi_*$ is called the {\em Gysin homomorphism} or push-forward map and can be understood as a kind of fibre integration. One way to define $\pi_*$ is through the Leray-Serre spectral sequence for $\pi : X \to M$.\\

We are almost ready to state our definition of T-duality for T-duality backgrounds. The construction of an isomorphism between conformal Courant algebroids and twisted cohomology taken up in Section \ref{otd} is the underlying motivation for the present definition. If $\pi : X \to M$, $\hat{\pi} : \hat{X} \to M$ are circle bundles over a common base $M$, the fibre product $C = X \times_M \hat{C}$ will be called the {\em correspondence space} and we will visualize the relation between the spaces $X,\hat{X}$ by the following commuative diagram:
\begin{equation*}\xymatrix{
& C \ar[dl]_p \ar[dr]^{\hat{p}} \ar[dd]^{q} & & \\
X \ar[dr]_\pi & & \hat{X} \ar[dl]^{\hat{\pi}} \\
& M & &
}
\end{equation*}

Later we will need to make use of the Leray-Serre spectral sequences associated to these bundles. For clarification we denote by $E_r^{p,q}(f,A)$ the Leray-Serre spectral sequence associated to a fibre bundle $f : Y \to Z$ with coefficients in $A$ which is an abelian group or local system of abelian groups on the base. In \cite{bar} the $E_2$-stage and differential $d_2$ were determined for torus bundles with structure group ${\rm Aff}(T^n) = {\rm GL}(n,\mathbb{Z}) \ltimes T^n$, the group of affine transformations of the $n$-torus. If $X,\hat{X} \to M$ are circle bundles then $X,\hat{X},C = X \times_M \hat{X}$ are affine, so these results are applicable here. We recall the relevant details in Appendix \ref{aftbss}

\begin{definition}\label{tdd}
Let $(\pi : X \to M , \epsilon , (\alpha , h))$, $(\hat{\pi} : \hat{X} \to M , \hat{\epsilon} , (\hat{\alpha},\hat{h}))$ be two T-duality backgrounds over a common base $M$. Let $\xi,\hat{\xi}$ be the Stiefel-Whitney classes and $c,\hat{c}$ the twisted Chern classes for $X$ and $\hat{X}$ respectively. We say that $(\hat{X},\hat{\pi},\hat{\epsilon},(\hat{\alpha},\hat{h}))$ is {\em T-dual} to $(X,\pi,\epsilon,(\alpha,h))$ if the following conditions hold:
\begin{itemize}
\item[(T1)]{$\hat{\epsilon} = \epsilon$,}
\item[(T2)]{$\hat{\xi} = \xi + \epsilon$,}
\item[(T3)]{$\hat{\alpha} = \alpha + \xi$,}
\item[(T4)]{$\pi_*(h) = \hat{c}$, $\hat{\pi}_*(\hat{h}) = c$,}
\item[(T5)]{$p^*h = \hat{p}^*\hat{h}$.}
\end{itemize}
\end{definition}

This definition is dictated by the definition of topological T-duality in \cite{bem} (the case $\epsilon = \alpha = \xi = 0$), its extension to general circles in \cite{bar} (the case $\epsilon = 0$) and the requirement of an isomorphism of conformal Courant algebroids. Note that the T-duality relation as stated is not quite symmetric. Indeed if $(\hat{X},\hat{\pi},\hat{\epsilon},(\hat{\alpha},\hat{h}))$ is T-dual to $(X,\pi,\epsilon,(\alpha,h))$ then a T-dual of $(\hat{X},\hat{\pi},\hat{\epsilon},(\hat{\alpha},\hat{h}))$ is given by $(X,\pi,\epsilon,(\alpha+\epsilon,h))$. However we can state T-duality in a slightly different way that does make it symmetric. We will see that T-duality entails an isomorphism of twisted cohomology $H_\epsilon^{t,\alpha,h}(X) \simeq H_{\hat{\epsilon}}^{t-1,\hat{\alpha},\hat{h}}(\hat{X})$. Introduce an integer $t$ so that a T-duality background now consists of a tuple $(X,\pi,\epsilon,(t,\alpha,h))$ with $t \in \mathbb{Z}$. The T-duality relation as stated in Definition \ref{tdd} can be supplemented by the condition $\hat{t} = t-1$. If we perform T-duality twice we go from $(X,\pi,\epsilon,(t,\alpha,h))$ to $(X,\pi,\epsilon,(t-2,\alpha + \epsilon,h))$. But now recall that there are canonical isomorphisms $H_\epsilon^{t,\alpha,h}(X) \simeq H_\epsilon^{t-2,\alpha+\epsilon,h}(X)$. This suggests that we should regard the T-duality backgrounds $(X,\pi,\epsilon,(t,\alpha,h))$ and $(X,\pi,\epsilon,(t-2,\alpha+\epsilon,h))$ as equivalent. We have that modulo equivalence T-duality is symmetric. Note that the integer $t$ taken modulo $2$ determines whether the orientifold background is being used for type IIB or IIA orientifold string theory.\\

\begin{proposition}\label{tdprop} Every T-duality background has a T-dual which is unique up to isomorphism.
\begin{proof}
Let $(X,\pi,\epsilon,(\alpha,h))$ be a T-duality background where $X$ has Stiefel-Whitney class $\xi$ and twisted Chern class $c \in H^2(M,\mathbb{Z}_\xi)$. Following (T2) and (T4) of Definition \ref{tdd} we define $\hat{\xi} = \xi + \epsilon$ and $\hat{c} = \pi_*(h)$. The pair $(\hat{\xi},\hat{c})$ determines a circle bundle $\hat{\pi} : X \to M$ with Steifel-Whitney class $\hat{\xi}$ and twisted Chern class $\hat{c}$. Following Definition \ref{tdd}, (T1),(T3) we further define $\hat{\epsilon} = \epsilon$, $\hat{\alpha} = \alpha + \xi$. From the Gysin sequence (\ref{gys}) applied to $X$ we find that $c \smallsmile \hat{c} = 0$ and further applying the Gysin sequence to $\hat{X}$ we find that there exists a class $h' \in H^3(\hat{X},\mathbb{Z}_{\epsilon})$ such that $\hat{\pi}_* h' = c$. So far we have constructed a second T-duality background $(\hat{X},\hat{\pi},\hat{\epsilon},(\hat{\alpha},h'))$ which satisfies conditions (T1)-(T4) of Definition \ref{tdd}. The only difficulty is that condition (T5) need not be satisfied. Let $d = \hat{p}^*(h') - \hat{p}^*(h)$.

\begin{lemma}\label{exist}
There exists $a \in H^3(M,\mathbb{Z}_\epsilon)$ such that $p^*(h) - \hat{p}^*(h') = q^*(a)$.
\end{lemma}
We omit the proof of this result as it is almost identical to the proof in \cite{bar}. Given Lemma \ref{exist} we then set $\hat{h} = h' + \hat{\pi}^*(a)$. It follows immediately that $(\hat{X},\hat{\pi},\hat{\epsilon},(\hat{\alpha},h'))$ is a T-dual to $(X,\pi,\epsilon,(\alpha,h))$.\\

For uniqueness note that by Definition (\ref{tdd}) any other T-dual is up to isomorphism of the form $(\hat{X},\hat{\pi},\hat{\epsilon},(\hat{\alpha},k))$ for some $k \in H^3(\hat{X},\mathbb{Z}_\epsilon)$ such that $\hat{\pi}_*(k) = c$, $\hat{p}^*(k) = p^*(h)$. Let $\mu = k - \hat{h}$. We see immediately that $\hat{\pi}_*(\mu) = 0$ and $\hat{p}^*(\mu) = 0$.

\begin{lemma}\label{mainlem}
Let $\mu \in H^3(\hat{X} , \mathbb{Z}_{\epsilon})$ be such that $\hat{\pi}_* \mu = 0$ and $\hat{p}^* \mu = 0$. There exists $e \in H^1(M,\mathbb{Z}_{\hat{\xi}})$ such that $\mu = \hat{\pi}^*( c \smallsmile e)$.
\end{lemma}

\begin{lemma}\label{gauge}
Let $\pi : X \to M$ be a circle bundle with fiber orientation class $\xi \in H^1(M,\mathbb{Z}_2)$. For any $\alpha \in H^1(M,\mathbb{Z}_\xi)$ there exists a bundle isomorphism $\phi : X \to X$ such that for any $x \in H^k(E,\mathbb{Z}_\epsilon)$ we have
\begin{equation*}
\phi^* x = x + \pi^*( \alpha \smallsmile \pi_* x ).
\end{equation*}
\end{lemma}

The proof of Lemma \ref{mainlem} is given below. The proof of Lemma \ref{gauge} is omitted since it is identical to the proof \cite{bar}.\\

By Lemmas \ref{mainlem} and \ref{gauge} we can find $\alpha \in H^1(M,\mathbb{Z}_{\hat{\xi}})$ such that $k = \hat{h} + \hat{\pi}^*( \alpha \smallsmile c)$ and a bundle isomorphism $\phi : \hat{X} \to \hat{X}$ corresponding to $\alpha$ such that
\begin{equation*}
\phi^* \hat{h} = \hat{h} + \hat{\pi}^*( \alpha \smallsmile \hat{\pi}_* \hat{h}) = \hat{h} + \hat{\pi}^*( \alpha \smallsmile c) = k.
\end{equation*}
This establishes uniqueness.
\end{proof}
\end{proposition}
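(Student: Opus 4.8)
The plan is to peel $\mu$ off the fibre using the Gysin sequence of $\hat\pi$, transport the vanishing hypothesis $\hat p^{*}\mu=0$ around the correspondence square, and then descend the resulting class back down to $M$. Throughout I would use repeatedly the Gysin sequence (\ref{gys}) of each of the three circle bundles $\pi\colon X\to M$, $\hat\pi\colon\hat X\to M$ and $p\colon C\to X$ --- here $p$ is the pull-back bundle $\pi^{*}\hat X$, with Stiefel--Whitney class $\pi^{*}\hat\xi$ and twisted Chern class $\pi^{*}\hat c$ --- together with the two standard consequences of that sequence: that the pull-back of a twisted Chern class to the total space of its own circle bundle vanishes, and that the kernel of $\pi^{*}$ on $H^{3}$ is exactly cup product with the twisted Chern class.

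First I would use $\hat\pi_{*}\mu=0$ and exactness of the Gysin sequence of $\hat\pi$ to write $\mu=\hat\pi^{*}b$ for some $b\in H^{3}(M,\mathbb{Z}_{\epsilon})$, well defined modulo $\ker\hat\pi^{*}=\hat c\smallsmile H^{1}(M,\mathbb{Z}_{\xi})$. Since $q=\pi\circ p=\hat\pi\circ\hat p$, the hypothesis $\hat p^{*}\mu=0$ then reads $p^{*}(\pi^{*}b)=0$, so by the Gysin sequence of $p$ one obtains $\pi^{*}b=\pi^{*}\hat c\smallsmile f$ for some $f\in H^{1}(X,\mathbb{Z}_{\pi^{*}\xi})$. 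The third step is to descend $f$ along $\pi$: the Gysin sequence of $\pi$ shows $\pi^{*}\colon H^{1}(M,\mathbb{Z}_{\xi})\to H^{1}(X,\mathbb{Z}_{\pi^{*}\xi})$ is injective with cokernel $\{\,n\in\mathbb{Z}\mid nc=0\,\}$ ($M$ connected), so after modifying $b$ within its ambiguity I may assume $f=\pi^{*}e_{0}+f'$ with $f'$ mapping onto that cokernel; applying $\pi_{*}$ to $\pi^{*}b=\pi^{*}\hat c\smallsmile f$ and using $\pi_{*}\pi^{*}=0$ together with the projection formula $\pi_{*}(\pi^{*}\hat c\smallsmile f)=\hat c\smallsmile\pi_{*}f$ forces both $(\pi_{*}f)\,\hat c=0$ and $(\pi_{*}f)\,c=0$. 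In the generic case where $c$ or $\hat c$ has infinite order this gives $\pi_{*}f=0$, hence $f=\pi^{*}e_{0}$, hence $\pi^{*}b=\pi^{*}(\hat c\smallsmile e_{0})$, and therefore $b-\hat c\smallsmile e_{0}\in\ker\pi^{*}=c\smallsmile H^{1}(M,\mathbb{Z}_{\hat\xi})$; writing $b=\hat c\smallsmile e_{0}+c\smallsmile e$ and applying $\hat\pi^{*}$ kills the first term because $\hat\pi^{*}\hat c=0$, leaving $\mu=\hat\pi^{*}b=\hat\pi^{*}(c\smallsmile e)$.

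The hard part will be the remaining torsion case of this third step, in which $\pi_{*}f$ need only be a common torsion multiple of $c$ and $\hat c$ and $f$ genuinely fails to be a pull-back from $M$. Dealing with this amounts to computing $\ker\bigl(q^{*}\colon H^{3}(M,\mathbb{Z}_{\epsilon})\to H^{3}(C,\mathbb{Z}_{\epsilon})\bigr)$ precisely and showing it equals $c\smallsmile H^{1}(M,\mathbb{Z}_{\hat\xi})+\hat c\smallsmile H^{1}(M,\mathbb{Z}_{\xi})$ --- equivalently, that the higher differential $d_{3}\colon E_{3}^{0,2}\to E_{3}^{3,0}$ in the Leray--Serre spectral sequence of the correspondence torus bundle $q\colon C\to M$ contributes nothing on the relevant classes. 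For this I would invoke the description of the spectral sequence of an affine torus bundle recalled in Appendix \ref{aftbss} and established in \cite{bar} (the $E_{2}$-page, the differential $d_{2}$ being cup product with $c$ and $\hat c$, and the behaviour of $d_{3}$), which uses the relation $c\smallsmile\hat c=0$ obtained earlier in the proof of Proposition \ref{tdprop}. Granting this, $\ker q^{*}=c\smallsmile H^{1}(M,\mathbb{Z}_{\hat\xi})+\ker\hat\pi^{*}$, so $q^{*}b=0$ forces $b\in c\smallsmile H^{1}(M,\mathbb{Z}_{\hat\xi})+\ker\hat\pi^{*}$, and hence $\mu=\hat\pi^{*}b=\hat\pi^{*}(c\smallsmile e)$ for some $e\in H^{1}(M,\mathbb{Z}_{\hat\xi})$, as claimed.
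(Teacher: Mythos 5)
Your proposal addresses only Lemma \ref{mainlem}; the existence half of Proposition \ref{tdprop}, Lemma \ref{exist}, Lemma \ref{gauge} and the assembly of uniqueness from them are not touched, though those are the routine parts. On Lemma \ref{mainlem} itself, your first two reductions are sound and run slightly differently from the paper: writing $\mu=\hat\pi^*b$, then using the Gysin sequence of $p\colon C\to X$ (rather than the spectral sequence of $q$ directly) to get $\pi^*b=\pi^*\hat c\smallsmile f$, and descending $f$ along $\pi$. In the case where $c$ or $\hat c$ has infinite order this correctly yields $b\in c\smallsmile H^1(M,\mathbb{Z}_{\hat\xi})+\hat c\smallsmile H^1(M,\mathbb{Z}_{\xi})$ and hence the lemma, matching the paper's first case.

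The genuine gap is the torsion case, which you correctly identify as the hard part and then do not prove. You reduce it to the claim that $\ker q^*$ on $H^3(M,\mathbb{Z}_\epsilon)$ is exactly $c\smallsmile H^1(M,\mathbb{Z}_{\hat\xi})+\hat c\smallsmile H^1(M,\mathbb{Z}_{\xi})$, equivalently that $d_3\colon E_3^{0,2}(q,\mathbb{Z}_\epsilon)\to E_3^{3,0}(q,\mathbb{Z}_\epsilon)$ vanishes, and propose to ``invoke'' the behaviour of $d_3$ from Appendix \ref{aftbss} and \cite{bar}. But the appendix and \cite{bar} describe only the $E_2$-page and the differential $d_2$ (contraction with the twisted Chern class); they say nothing about $d_3$, and when $c$ and $\hat c$ are both torsion $E_3^{0,2}$ is a nonzero subgroup $m\mathbb{Z}\subseteq\mathbb{Z}$, so there is a priori a nontrivial $d_3$ to contend with. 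Establishing that $d_3(r)=0$ in $E_3^{3,0}(q)$ for $r\in E_3^{0,2}(q)$ is precisely the content of the paper's argument: it lifts $(\xi,c)$ and $(\hat\xi,\hat c)$ to universal classes over $K(D_m,1)\times K(D_m,1)$, transports a representative $\psi$ of $d_3(r)$ from the universal correspondence space, and uses the fact that $\hat\pi^*\hat c=0$ forces a factorization through $K(\mathbb{Z}_2,1)$ (where the relevant $E_3^{0,2}$ vanishes) to conclude that $\hat\pi^*\psi$ lies in $\hat\pi^*(c\smallsmile H^1(M,\mathbb{Z}_{\hat\xi}))$. Without some argument of this kind your claimed computation of $\ker q^*$ is an assertion, not a proof, and it is exactly the point at which the lemma could fail.
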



\begin{proof}[Proof of Lemma \ref{mainlem}]

Since $\hat{\pi}_* \mu = 0$ we have from the Gysin sequence that there exists $\nu \in H^3(M,\mathbb{Z}_\epsilon)$ such that $\hat{\pi}^*\nu = \mu$. It follows also that $q^*(\nu) = 0$. Let $\nu_3$ denote the image of $\nu$ in $E_3^{3,0}(q,\mathbb{Z}_\epsilon)$. Now since $q^*(\nu) = 0$ it follows that there exists $r \in E_3^{0,2}(q,\mathbb{Z}_\epsilon)$ such that $\nu_3 = d_3(r)$. Next observe that $E_3^{0,2}(q,\mathbb{Z}_\epsilon)$ is the subgroup of elements $r \in E_2^{0,2}(q,\mathbb{Z}_\epsilon) = H^0(M,\mathbb{Z}_{\epsilon + \xi + \hat{\xi}}) = H^0(M,\mathbb{Z}) = \mathbb{Z}$ such that $rc = 0$, $r\hat{c} = 0$.\\

Suppose first that $c,\hat{c}$ are not both torsion. Then $E_3^{0,2}(q,\mathbb{Z}_\epsilon) = 0$ and $\nu_3 = 0$. Therefore there exists $(\alpha,\hat{\alpha}) \in H^1(M,\mathbb{Z}_{\hat{\xi}}) \oplus H^1(M,\mathbb{Z}_\xi)$ such that $\nu = d_2(\alpha,\hat{\alpha}) = c \smallsmile \alpha + \hat{c} \smallsmile \hat{\alpha}$. Now if we apply $\hat{\pi}^*$ and use the fact that $\hat{\pi}^*(\hat{c}) = 0$ then we obtain $\mu = \hat{\pi}^* \nu = \hat{\pi}^*( c \smallsmile \alpha)$ as required.\\

We assume now that $c,\hat{c}$ are both torsion. Let $m$ be the least positive integer such that $mc = m\hat{c} = 0$. Then $E_3^{0,2}(q,\mathbb{Z}_\epsilon) \subseteq E_2^{0,2}(q,\mathbb{Z}_\epsilon) = H^0(M,\mathbb{Z})$ is the subgroup $m\mathbb{Z} \subseteq \mathbb{Z}$. We have that $\nu_3 = d_3(r)$ where $r \in m\mathbb{Z}$. Consider the short exact sequence $0 \to \mathbb{Z}_\xi \buildrel m \over \to \mathbb{Z}_\xi \to (\mathbb{Z}_m)_\xi \to 0$ where $(\mathbb{Z}_m)_\xi$ is the local system with coefficients $\mathbb{Z}_m$ obtained from twisting by $\xi$. As $mc = 0$ we have that there exists $a \in H^1(M,(\mathbb{Z}_m)_\xi)$ such that $c = \delta(a)$, where $\delta$ is the coboundary operator associated to the above short exact sequence. Similarly we have $\hat{a} \in H^1(M,(\mathbb{Z}_m)_{\hat{\xi}})$ such that $\hat{c} = \delta(\hat{a})$. The element $a$ can be thought of as an extension of the homomorphism $\pi_1(M) \to \mathbb{Z}_2$ determined by $\xi$ to a homomorphism $\pi_1(M) \to D_{m}$ where $D_{m} = \mathbb{Z}_2 \ltimes \mathbb{Z}_m$ is the dihedral group of order $2m$. It follows that the pair $(\xi,a)$ correspond to a pair of maps $(f,g)$ forming a commutative diagram as follows
\begin{equation*}\xymatrix{
M \ar[r]^-g \ar[dr]_-f & K(D_{m},1) \ar[d]^-{pr} \\
& K(\mathbb{Z}_2,1)
}
\end{equation*}
where $pr : K(D_{m},1) \to K(\mathbb{Z}_2,1)$ is the map associated to the homomorphism $D_{m} \to \mathbb{Z}_2$ obtained by projecting out $\mathbb{Z}_m$. There exists a universal class $\xi_0 \in H^1(K(\mathbb{Z}_2,1),\mathbb{Z}_2)$ such that $\xi = f^*(\xi_0)$ and a universal class $a_0 \in H^1( K(D_{m},1) , (\mathbb{Z}_m)_{pr^*(\xi_0)})$ such that $g^*(a_0) = a$. If we let $c_0 = \delta(a_0) \in H^2(K(D_{m},1),\mathbb{Z}_{pr^*(\xi_0)})$ then $c = g^*(c_0)$ as well. Similarly we have a pair of maps $\hat{f} : M \to K(D_{m},1)$, $\hat{g} : M \to K(D_{m},1)$ such that $\hat{f}^*(\xi_0) = \hat{\xi}$, $pr \circ \hat{g} = \hat{f}$ and $\hat{a} = \hat{g}^*(a_0)$, so $\hat{c} = \hat{g}^*(c_0)$.\\

Let $X_0 \to K(D_{m},1)$ be the circle bundle with monodromy $pr^*(\xi_0)$ and twisted Chern class $c_0$. We can identify $X$ as the pull-back of $X_0$ under $g$ and $\hat{X}$ the pull-back of $X_0$ under $\hat{g}$. Next let $Y = K(D_{m},1) \times K(D_{m},1)$ and let $pr_1,pr_2$ be the projections of $Y$ to the respective factors. Set $C_0 = pr_1^*(X_0) \times_Y pr_2^*(X_0)$, so $C = X \times_M \hat{X}$ is the pull-back of $C_0$ under $(g,\hat{g})$. Let us define $\xi_i = pr_i^* pr^* (\xi_0)$, $c_i = pr_i^*(c_0)$ and set $\epsilon_0 = \xi_1 + \xi_2$. We have that $\xi = (g,\hat{g})^* \xi_1 = (g,\hat{g})^* \xi_2$, $(g,\hat{g})^* \epsilon_0 = \epsilon$, $c = (g,\hat{g})^* c_1$ and $\hat{c} = (g,\hat{g})^* c_2$.\\

Let $q_0$ be the projection $q_0 : C_0 \to Y$. We have $E_2^{0,2}(q_0,\mathbb{Z}_{\epsilon_0}) = \linebreak H^0(Y , \mathbb{Z}_{\epsilon_0 + \xi_1 + \xi_2}) = H^0(Y,\mathbb{Z}) = \mathbb{Z}$ and since $mc_1 = 0$, $mc_2 = 0$ we find that $E_3^{0,2}(q_0,\mathbb{Z}_{\epsilon_0}) \subseteq E_2^{0,2}(q_0,\mathbb{Z}_{\epsilon_0})$ corresponds to $m\mathbb{Z} \subseteq \mathbb{Z}$, so in particular the integer $r$ can be thought as $r \in E_3^{0,2}(q_0,\mathbb{Z}_{\epsilon_0})$. Let $\psi \in E_2^{3,0}(q_0,\mathbb{Z}_{\epsilon_0}) = H^3(Y,\mathbb{Z}_{\epsilon_0})$ be a representative for $d_3(r) \in E_3^{3,0}(q_0,\mathbb{Z}_\epsilon)$. The point is that it follows that $(g,\hat{g})^* \psi \in H^3(M,\mathbb{Z}_{(g,\hat{g})^*{\epsilon_0}}) = H^3(M,\mathbb{Z}_\epsilon)$ is a representative for $d_3(r) \in E_3^{3,0}(q,\mathbb{Z}_\epsilon)$. Therefore there exists $(e,\hat{e}) \in E_2^{1,1}(q,\mathbb{Z}_\epsilon) = H^1(M,\mathbb{Z}_{\hat{\xi}}) \oplus H^1(M,\mathbb{Z}_\xi)$ such that $\nu = (g,\hat{g})^*\psi + d_2(e,\hat{e}) = (g,\hat{g})^*\psi + c \smallsmile e + \hat{c} \smallsmile \hat{e}$. If we apply $\hat{\pi}$ we obtain 
\begin{equation}\label{formmu}
\mu = \hat{\pi}^*\nu = \hat{\pi}^*(g,\hat{g})^* \psi + \hat{\pi}^*(c \smallsmile e)
\end{equation}
where we have used $\hat{\pi}^*(\hat{c}) = 0$. Observe now that $\hat{\pi}^*(g,\hat{g})^* = (g \circ \hat{\pi} , \hat{g} \circ \hat{\pi})^*$. Note that since $\hat{\pi}^* \hat{c} = 0$ we have $(g \circ \hat{\pi})^* c_0 = 0$. This is precisely the condition that $g \circ \hat{\pi} : \hat{X} \to K(D_{m},1)$ factors through $K(\mathbb{Z}_2,1)$, that is there is a map $h : \hat{X} \to K(\mathbb{Z}_2,1)$ such that $g \circ \hat{\pi}$ is homotopic to the composition $\hat{X} \buildrel h \over \to K(\mathbb{Z}_2,1) \buildrel i \over \to K(D_{m},1)$ where $i : K(\mathbb{Z}_2,1) \to K(D_{m},1)$ is the natural map corresponding to the homomorphism $\mathbb{Z}_2 \to D_{m}$. One sees that $i^* pr^* \xi_0 = \xi_0$ and it follows that $h^* \xi_0 = \xi$. Thus $h$ is homotopic to $\hat{f} \circ \hat{\pi} : \hat{X} \to K(\mathbb{Z}_2,1)$. The main point is that $\hat{g} \circ \hat{\pi}$ is homotopic to $i \circ \hat{f} \circ \hat{\pi}$, so $(g \circ \hat{\pi} , \hat{g} \circ \hat{\pi})^* \psi  = (g \circ \hat{\pi} , \hat{f} \circ \hat{\pi})^* (id , i)^* \psi$, where $(id,i)$ is the map $(id,i) : K(D_{m},1) \times K(\mathbb{Z}_2,1) \to K(D_{m},1) \times K(D_{m},1)$.

Note that $i^*(X_0)$ has trivial twisted Chern class so admits a section $\sigma : K(\mathbb{Z}_2 , 1) \to i^*(X_0)$. We then get a commutative diagram
\begin{equation*}\xymatrix{
pr_1^*(X_0) \ar[d]^{q_1} \ar[r]^-{id \times \sigma} & C_0 \ar[d]^-{q_0} \\
K(D_{m},1) \times K(\mathbb{Z}_2,1) \ar[r]^-{(id,i)} & Y
}
\end{equation*}
where we let $pr_1 , pr_2$ denote the projections of $K(D_{m},1) \times K(\mathbb{Z}_2,1)$ to the first and second factors. Let $E_r^{p,q}(q_1,\mathbb{Z}_{\epsilon_1})$ be the Leray-Serre spectral sequence for $q_1 : pr_1^*(X_0) \to K(D_{m},1) \times K(\mathbb{Z}_2,1)$ with coefficients in $\mathbb{Z}_{\epsilon_1}$ where $\epsilon_1 = (id,i)^*\epsilon_0 = pr_1^* pr^*(\xi_0) + pr_2^*(\xi_0)$. We have that $(id,i)^* \psi $ is a representative in $E_2^{3,0}(q_1,\mathbb{Z}_{\epsilon_1})$ for $(id,i)^*d_3(r) = d_3( (id,i)^* r) \in E_3^{3,0}(q_1,\mathbb{Z}_{\epsilon_1})$, but note that $q_1 : pr_1^*(X_0) \to K(D_{m},1) \times K(\mathbb{Z}_2,1)$ is a circle bundle so in fact $(id,i)^* r \in E_3^{0,2}(q_1,\mathbb{Z}_{\epsilon_1}) = 0$ and thus $(id,i)^* \psi = d_2( j ) = pr_1^*(c_0) \smallsmile j$ for some $j \in H^1(K(D_{m}, 1) \times K(\mathbb{Z}_2,1) , \mathbb{Z}_{pr_2^* \xi_0})$. We now have $(g \circ \hat{\pi} , \hat{g} \circ \hat{\pi})^* \psi = (g \circ \hat{\pi} , \hat{f} \circ \hat{\pi})^* (id,i)^* \psi = \hat{\pi}^* (g,\hat{f})^* (pr_1^*(c_0) \smallsmile j) = \hat{\pi}^*( c \smallsmile k)$ where $k = (g,\hat{f})^* j \in H^1(M,\mathbb{Z}_{\hat{\xi}})$. Combining with (\ref{formmu}) we obtain $\mu = \hat{\pi}^*( c \smallsmile (k+e))$ which on setting $\alpha = k+e$ has the desired form.
\end{proof}


\subsection{T-duality and twisted spin structures}\label{tdtss}

In \cite{dfm0} it is proposed that the Ramond-Ramond fields for a string theory background $(X,\epsilon,(\mathcal{G}))$ have fluxes which are valued in $KR$-theory twisted by $\mathcal{G}$, denoted $KR^*(X,\mathcal{G})$. The fields themselves can be thought of as living in the corresponding twisted differential cohomology theory $\check{K}R^*(X,\check{\mathcal{G}})$, where $\check{\mathcal{G}}$ is now a twist in differential cohomology. However the Ramond-Ramond fields in string theory are supposed to satisfy a self-duality condition. In order the express self-duality condition \cite{dfm0} introduces the notion of a {\em twisted spin structure}. In particular there are topological obstructions to existence of twisted spin structures. Although we will do not consider self-duality here we will show that the topological obstructions for a twisted spin structure are compatible with our definition of T-duality. This provides a consistency check that our formulation of T-duality is physically correct.\\

Let $W$ be an orthogonal vector bundle of rank $n$ on $X$ and suppose we also have classes $\epsilon,a \in H^1(X,\mathbb{Z}_2)$. The data $W,\epsilon,a$ determines a principal ${\rm O}(n) \times \mathbb{Z}_2 \times \mathbb{Z}_2$-bundle $P_{W,\epsilon,a} \to X$, where the two factors of $\mathbb{Z}_2$ come from the double covers associated to $\epsilon,a$. In \cite{dfm} a {\em twisted spin-structure} on $W$ is defined as the combination of a reduction of structure group of $P_{W,\epsilon,a}$ from ${\rm O}(n) \times \mathbb{Z}_2 \times \mathbb{Z}_2$ to a certain subgroup $G'_i$ of index $2$, followed by a lift of the structure group to a certain $2$-fold cover $G_i \to G'_i$. There are actually two choices for the groups $G_i , G'_i$ indicated by the index $i \in \mathbb{Z}_2$. The reason for this is that the $i=0$ version is supposed to correspond to type IIB theory and $i=1$ to type IIA. Let $\tilde{{\rm O}}(n) = {\rm O}(n) \times \mathbb{Z}_2 \times \mathbb{Z}_2$. Then $G'_0$ is defined to be the subgroup ${\rm SO}(n) \times \mathbb{Z}_2 \times \mathbb{Z}_2$ while $G'_1$ is defined to be the kernel of $\tilde{{\rm O}}(n) \to \mathbb{Z}_2 = \mathbb{Z}/2\mathbb{Z}$ which sends $(g,a,b)$ to $c + a$ where ${\rm det}(g) = (-1)^c$. Let $D_4$ denote the dihedral group of order $8$. Note that the center of $D_4$ is $\mathbb{Z}_2$ and that the quotient by the center is $\mathbb{Z}_2 \times \mathbb{Z}_2$. Thus we obtain a double cover $({\rm Pin}_-(n) \times D_4) / \{ \pm 1 \} \to \tilde{{\rm O}}(n)$ where $-1$ means the product of the kernel ${\rm Pin}_-(n) \to {\rm O}(n)$ with the central element of $D_4$. For $i=0,1$ we define $G_i$ to be the inverse image of $G'_i$ under the homomorphism $({\rm Pin}_-(n) \times D_4) / \{\pm 1\} \to \tilde{{\rm O}}(n)$.\\

The obstructions for $W$ to admit a twisted spin structure consist of classes $O_1 \in H^1(X,\mathbb{Z}_2)$ and $O_2 \in H^2(X,\mathbb{Z}_2)$. The reduction from $\tilde{{\rm O}}(n)$ to $G'_i$ is possible if and only if $O_1$ vanishes. A lift of structure group from $G'_i$ to $G_i$ is then possible if and only if $O_2$ vanishes as well. The obstructions are as follows
\begin{eqnarray}
O_1 &=& w_1(W) + i \epsilon \label{o1}, \\
O_2 &=& w_2(W) + i \epsilon^2 + a \epsilon. \label{o2}
\end{eqnarray}

In the context of a T-duality background $(\pi:X \to M,\epsilon,(t,\alpha,h))$ (complete with $t \in \mathbb{Z}$) we take $i$ to be $t$ mod $2$ and $a = \alpha + \frac{1}{2}t(t-1)\epsilon$. To explain this seemingly arbitrary change of variable recall that we can identify $(\pi:X \to M,\epsilon,(t,\alpha,h))$ and $(\pi:X \to M,\epsilon,(t+2,\alpha+\epsilon,h))$ as equivalent T-duality backgrounds. If we set $i = t$ mod $2$ and $a = \alpha + \frac{1}{2}t(t-1)$ then we find that the pair $(i,a)$ is the same for equivalent backgrounds and the set of all such pairs $(i,a) \in \mathbb{Z}_2 \times H^1(M,\mathbb{Z}_2)$ parametrize equivalence classes of backgrounds (for fixed $X \to M$, $\epsilon$ and $h$).\\

Let $V$ be the vertical bundle of $\pi : X \to M$ so that $TX$ is isomorphic to the pull-back under $\pi$ of the bundle $TM \oplus V$. The obstructions for a twisted spin structure for $TM \oplus V$ on $M$ are
\begin{eqnarray}
O_1 &=& w_1(TM \oplus V) + t \epsilon, \label{o3} \\
O_2 &=& w_2(TM \oplus V) + \frac{1}{2}t(t+1) \epsilon^2 + \alpha \epsilon. \label{o4}
\end{eqnarray}

\begin{proposition} Let $(\pi:X \to M,\epsilon,(t,\alpha,h))$ be a T-duality background and $(\hat{\pi}: \hat{X} \to M, \hat{\epsilon} , (\hat{t},\hat{\alpha},\hat{h}))$ a T-dual background. Let $O_i \in H^i(M,\mathbb{Z}_2)$ and $\hat{O}_i \in H^i(M,\mathbb{Z}_2)$ for $i=1,2$ be defined as in (\ref{o3}),(\ref{o4}). We have $\hat{O}_1 = O_1$ and $\hat{O}_2 = O_2 + \epsilon O_1$. Thus $O_1,O_2$ both vanish if and only if $\hat{O}_1,\hat{O}_2$ both vanish.
\begin{proof}
Let $\xi,\hat{\xi} \in H^1(M,\mathbb{Z}_2)$ be the first Stiefel-Whitney classes of $X,\hat{X}$. Recall the following T-duality relations $\hat{\epsilon} = \epsilon$, $\hat{\xi} = \xi + \epsilon$, $\hat{\alpha} = \alpha + \xi$, $\hat{t} = t-1$ which follow from the definition of $T$-duality. Next observe the following identities
\begin{eqnarray}
w_1(TM \oplus \hat{V} ) &=& w_1(TM \oplus V) + \epsilon \\
w_2(TM \oplus \hat{V} ) &=& w_2(TM \oplus V) + w_1(TM) \epsilon,
\end{eqnarray}
here $V,\hat{V}$ are the line bundles corresponding to $\xi, \hat{\xi}$. Combining these it is straightforward to show that $\hat{O}_1 = O_1$, $\hat{O}_2 = O_2 + \epsilon O_1$.
\end{proof}
\end{proposition}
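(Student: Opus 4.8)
The plan is to reduce the statement to elementary bookkeeping with Stiefel--Whitney classes over $\mathbb{Z}_2$, fed by the T-duality relations. First I would record the relations forced by Definition \ref{tdd} together with the supplementary grading-shift condition, namely $\hat\epsilon = \epsilon$, $\hat\xi = \xi + \epsilon$, $\hat\alpha = \alpha + \xi$ and $\hat t = t - 1$, where $\xi,\hat\xi \in H^1(M,\mathbb{Z}_2)$ are the Stiefel--Whitney classes of $X,\hat X$. Writing $V,\hat V$ for the real line bundles on $M$ with $w_1(V) = \xi$ and $w_1(\hat V) = \hat\xi$, the next step is to compare the low-degree Stiefel--Whitney classes of the two rank-$(\dim M + 1)$ bundles $TM \oplus V$ and $TM \oplus \hat V$ via the Whitney sum formula. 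Since $\hat V$ is a line bundle, $w(\hat V) = 1 + \hat\xi$ has no component in degree $\ge 2$, so expanding $w(TM)(1+\hat\xi)$ degree by degree and subtracting off the analogous expressions for $V$ (using $\hat\xi - \xi = \epsilon$) yields exactly the two identities asserted in the proposition, $w_1(TM \oplus \hat V) = w_1(TM \oplus V) + \epsilon$ and $w_2(TM \oplus \hat V) = w_2(TM \oplus V) + w_1(TM)\epsilon$; in particular no $\epsilon^2$ term can appear in the second one, precisely because $w(\hat V)$ is concentrated in degrees $0$ and $1$.

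Granting these, I would substitute into the defining formulas (\ref{o3}) and (\ref{o4}) for $\hat O_1,\hat O_2$. The first obstruction is immediate: $\hat O_1 = w_1(TM \oplus \hat V) + \hat t\,\epsilon = w_1(TM \oplus V) + \epsilon + (t-1)\epsilon = w_1(TM \oplus V) + t\epsilon = O_1$. For the second, the one point that wants care is the half-integer coefficient: $\tfrac12 \hat t(\hat t + 1) = \tfrac12 (t-1)t$ differs from $\tfrac12 t(t+1)$ by $t$, and over $\mathbb{Z}_2$ (where signs are immaterial) that residual $t\epsilon^2$ is exactly what is needed. Combining it with $\hat\alpha\epsilon = (\alpha+\xi)\epsilon$, the $w_2$-identity above, and $w_1(TM \oplus V) = w_1(TM) + \xi$, the terms reorganize into $\hat O_2 = O_2 + \big(w_1(TM \oplus V) + t\epsilon\big)\epsilon = O_2 + \epsilon\, O_1$. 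The last clause of the proposition is then purely formal: $\hat O_1 = O_1$ makes $O_1$ and $\hat O_1$ vanish together, and given $O_1 = 0$ the relation $\hat O_2 = O_2 + \epsilon\, O_1$ collapses to $\hat O_2 = O_2$, so the pairs $(O_1,O_2)$ and $(\hat O_1,\hat O_2)$ vanish simultaneously.

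I do not anticipate a genuine obstacle: the argument is Whitney-sum computation plus mod-$2$ parity accounting. The only error-prone step is tracking the integers $\tfrac12 t(t\pm 1)$ and their reductions modulo $2$ under the shift $\hat t = t - 1$, together with the $\hat\alpha\epsilon$ and $w_2$ contributions; I would carry out that small piece of arithmetic carefully and double-check that every $\epsilon^2$, $\xi\epsilon$ and $w_1(TM)\epsilon$ term reorganizes correctly into $\epsilon\,O_1$ with nothing spurious surviving.
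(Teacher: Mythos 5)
Your proposal is correct and follows essentially the same route as the paper: the T-duality relations $\hat\xi = \xi+\epsilon$, $\hat\alpha = \alpha+\xi$, $\hat t = t-1$, the two Whitney-sum identities for $w_1$ and $w_2$ of $TM\oplus \hat V$ versus $TM\oplus V$, and then direct substitution into the obstruction formulas. You in fact carry out explicitly the mod-$2$ bookkeeping (in particular the shift $\tfrac12\hat t(\hat t+1) \equiv \tfrac12 t(t+1) + t$ and the reassembly of $w_1(TM)\epsilon + \xi\epsilon + t\epsilon^2$ into $\epsilon\,O_1$) that the paper dismisses as straightforward, and your arithmetic checks out.
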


\begin{remark}
This result can be extended to the case of higher rank torus bundles. The relevant T-duality relations in rank $n$ become\footnote{We derived these relations by demanding an isomorphism of conformal Courant algebroids. A simple consistency check is to observe that these relations are consistent with iterating T-duality on a space that is a fibre product of circle bundles.} $\hat{\epsilon} = \epsilon$, $\hat{\xi} = \xi + n\epsilon$, $\hat{\alpha} = \alpha + \xi$, $\hat{t} = t-n$ and one easily checks that $\hat{O}_1 = O_1$, $\hat{O}_2 = O_2 + n \epsilon O_1$. We conclude that our approach to T-duality is compatible with the notion of twisted spin structures.
\end{remark}


\section{T-duality of twisted cohomology and conformal Courant algebroids}\label{otd}

\subsection{Invariant forms and cohomology}\label{invco}

Let $\pi : X \to M$ be a circle bundle over $M$, $\xi = w_1(X) \in H^1(M,\mathbb{Z}_2)$ be the Stiefel-Whitney class and $c = c_1(X) \in H^2(M,\mathbb{Z}_\xi)$ the twisted Chern class. Let $\mathbb{Z}_\xi$ be the $\mathbb{Z}$-valued local system corresponding to $\xi$ and $V = \mathbb{Z}_\xi \otimes \mathbb{R}$ the corresponding flat orthogonal line bundle. As usual the double cover $ p_\xi : X_\xi \to X$ can be given the structure of a principal ${\rm O}(2)$-bundle $\pi_\xi : X_\xi \to M$, where $\pi_\xi = \pi \circ p_\xi$.

We say that a differential form $\omega$ on $X$ is {\em invariant} if $p_\xi^*(\omega)$ is ${\rm O}(2)$-invariant. If $L$ is a flat line bundle on $M$ then the pull-back $\pi_\xi^*(L)$ admits an action of ${\rm O}(2)$ and thus it makes sense to say that a form $\omega$ on $X$ with values in $\pi^*(L)$ is invariant since $p_\xi^*(\omega)$ has values in $p_\xi^* \pi^*(L) = \pi_\xi^*(L)$. 

Now suppose that $L,A$ are flat orthogonal line bundles on $M$ and define $S^{i,A}_L(X) = \bigoplus_{j \in \mathbb{Z}} A \otimes L^j \otimes \wedge^{i+2j} T^*X$ as in Section \ref{twco}. We let $\Gamma_{{\rm inv}}( S^{i,A}_L(X))$ denote the subspace of $\Gamma( S^{i,A}_L(X))$ consisting of invariant forms. Suppose that $H \in \Omega^3(X,L)$ is a closed $L$-valued $3$-form. Then as in Section \ref{twco} we have a differential $d_{\nabla,H} : \Gamma( S^{i,A}_L(X)) \to \Gamma( S^{i+1,A}_L(X))$ and we define the twisted cohomology groups $H_L^{i,H,A}(X)$ to be the cohomology of this complex. Suppose further that $H$ is invariant. In this case we observe that the differential $d_{\nabla,H} = d_\nabla + H \wedge $ preserves the subcomplex of invariant differential forms. Therefore we get a second set of cohomology groups $H^{i,H,A}_{L,{\rm inv}}(X)$ by taking cohomology of the complex of invariant forms. The inclusion $\iota : \Gamma_{{\rm inv}}( S^{i,A}_L(X)) \to \Gamma( S^{i,A}_L(X))$ of invariant forms induces a morphism $\iota_* : H^{i,H,A}_{L,{\rm inv}}(X) \to H^{i,H,A}_L(X)$. The proof used in \cite{bar} easily adapts to this situation giving:

\begin{proposition}\label{tciso}
The morphism $\iota_* : H^{i,H,A}_{L,{\rm inv}}(X) \to H^{i,H,A}_L(X)$ is an isomorphism.
\end{proposition}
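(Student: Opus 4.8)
The plan is to show that the inclusion of invariant forms into all forms induces an isomorphism on twisted cohomology by the standard averaging/Borel–type argument, adapted to the presence of the extra twist $H$ and the local-system coefficients $L,A$.

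First I would set up the comparison between the two complexes. Let $C^\bullet = \Gamma(S^{\bullet,A}_L(X))$ with differential $d_{\nabla,H}$ and let $C^\bullet_{\mathrm{inv}} = \Gamma_{\mathrm{inv}}(S^{\bullet,A}_L(X))$ with the restricted differential; the inclusion $\iota$ is a chain map since $d_\nabla$, wedging with the invariant form $H$, and the flat connection $\nabla^\alpha$ all preserve invariance. To prove $\iota_*$ is an isomorphism I would build a chain homotopy inverse via averaging over the compact group $\mathrm{O}(2)$ acting on $X_\xi$. Concretely, for a form $\omega$ on $X$, pull back to $X_\xi$, average over $\mathrm{O}(2)$ with respect to Haar measure to get an invariant form, and push back down; because $L$ and $A$ are pulled back from $M$ and carry compatible $\mathrm{O}(2)$-actions on $\pi_\xi^*(L)$, $\pi_\xi^*(A)$, and because $H$ is invariant, the averaging operator $P$ commutes with $d_{\nabla,H}$ and is a projection onto $C^\bullet_{\mathrm{inv}}$. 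This already gives that $\iota_*$ is split injective and that $\iota_* P_* = \mathrm{id}$ on $H^{\bullet,H,A}_L(X)$ at the level of the induced map, provided one exhibits a homotopy $\mathrm{id} - \iota P \simeq 0$.

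The key step — and the place where the phrase ``the proof used in \cite{bar} easily adapts'' is doing the work — is constructing this explicit homotopy operator $K$ with $d_{\nabla,H} K + K d_{\nabla,H} = \mathrm{id} - \iota P$. The natural recipe is to use a homotopy on the group: write the averaging as integration over $\mathrm{O}(2)$ and interpolate between a point and the whole group, producing a de Rham–style homotopy along the $\mathrm{O}(2)$-orbit directions (equivalently, use that the fibre of $X_\xi \to X$ is the finite group $\mathbb{Z}_2$ and the ``averaging'' over $\mathrm{O}(2)$ descends to an operation over the circle fibre of $X\to M$, with a Cartan-homotopy formula $\mathcal{L}_\theta = i_\theta d + d i_\theta$ for the generating vertical vector field $\theta$). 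One then checks that $i_\theta$ and the homotopy are compatible with the $H$-twist: since $H$ is invariant, $i_\theta H$ is basic up to $d_\nabla$-exact terms in the appropriate sense, and the cross-terms $K(H\wedge -) + H\wedge K(-)$ that appear when commuting $K$ past the degree-zero operator $H\wedge$ cancel because $H\wedge$ is $C^\bullet$-linear and $K$ is built from contraction/integration in the fibre direction only. Formally, if $H\wedge$ commutes with $K$ up to the required terms, the untwisted homotopy identity $dK+Kd = \mathrm{id}-\iota P$ promotes directly to the twisted one.

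I expect the main obstacle to be precisely this last compatibility check: verifying that the homotopy operator constructed on the $\mathrm{O}(2)$-bundle (or the circle fibre of $X\to M$) genuinely commutes with wedging by the invariant, but not necessarily vertical-free, $3$-form $H$, so that the twisted differential $d_{\nabla,H}$ is homotopic to its invariant part. Two technical points feed into this. One is bookkeeping with the local systems $L,A$: the averaging must be performed on $X_\xi$ where $\pi_\xi^*(L)$ trivializes $\mathrm{O}(2)$-equivariantly, and one must confirm the operators descend to $X$ with values in $\pi^*(L)$, $\pi^*(A)$ — this is where $L,A$ orthogonal (so the transition cocycles are $\pm 1$ and the $\mathbb{Z}_2$-action is by isometries) is used. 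The other is that $S^{\bullet,A}_L$ is a $\mathbb{Z}$-graded (and, when $L^2=A^2=1$, $4$-periodic) object rather than the ordinary de Rham complex, so one must run the homotopy in each $\mathbb{Z}$-graded summand $A\otimes L^j\otimes \wedge^{i+2j}T^*X$ uniformly; this is routine since the homotopy operator is a single differential-geometric operation applied degree-by-degree. Once these are in place the proposition follows, exactly paralleling \cite{bar}.
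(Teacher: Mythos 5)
Your overall strategy (average over the compact structure group, then exhibit a chain homotopy between $\mathrm{id}$ and $\iota P$) is the right starting point, and your handling of the local systems $L,A$ via the $\mathrm{O}(2)$-equivariant trivialization on $X_\xi$ is fine. The gap is exactly at the step you flag as the main obstacle: the cross-terms do \emph{not} cancel. Any homotopy operator $K$ for the untwisted identity $dK+Kd=\mathrm{id}-\iota P$ built from the flow $\phi_s$ of the vertical generator $\theta$ and the contraction $i_\theta$ satisfies, for the invariant $3$-form $H$,
\begin{equation*}
H\wedge K\omega + K(H\wedge\omega) \;=\; (i_\theta H)\wedge K'\omega,
\end{equation*}
where $K'$ is the corresponding operator with $i_\theta$ omitted; this follows from $i_\theta(H\wedge\omega)=(i_\theta H)\wedge\omega - H\wedge i_\theta\omega$ together with $\phi_s^*H=H$ and $\phi_s^*(i_\theta H)=i_\theta H$. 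Writing $H=\pi^*(H_3)+\pi^*(\hat F)\wedge A$ as in Section \ref{tddf}, one has $i_\theta H=\pm\,\pi^*(\hat F)$, which is nonzero precisely in the interesting case where the T-dual Chern class is nontrivial. So the untwisted homotopy identity does not ``promote directly'' to the twisted one, and the assertion that the cross-terms vanish because $K$ only involves fibre-direction operations is false; being basic does not make $(i_\theta H)\wedge K'\omega$ vanish. (Also, $H\wedge$ has degree $+1$ in the grading of $S^{i,A}_L$, not degree $0$, so the relevant object is the anticommutator above.)

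The standard repair, and what the argument adapted from \cite{bar} actually rests on, is to use the averaging homotopy only for the \emph{untwisted} differential $d_\nabla$ and then transfer the conclusion to $d_{\nabla,H}$ by a filtration argument: filter both $\Gamma(S^{\bullet,A}_L(X))$ and $\Gamma_{\rm inv}(S^{\bullet,A}_L(X))$ by form degree (as in Section \ref{twco}); the filtration is bounded since form degree lies between $0$ and $\dim X$, $\iota$ is a filtered chain map, and on the first nontrivial page the induced map is the map from invariant de Rham cohomology (with coefficients in $A\otimes L^j$) to ordinary de Rham cohomology, which \emph{is} an isomorphism by your averaging homotopy since the perturbation $H\wedge$ has strictly positive filtration degree and so does not appear there. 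The comparison theorem for convergent spectral sequences then gives that $\iota_*$ is an isomorphism on the abutments. (Equivalently one can invoke the homological perturbation lemma: $K(H\wedge\cdot)$ raises form degree by $2$ and is therefore nilpotent, so the deformation retract for $d_\nabla$ perturbs to one for $d_{\nabla,H}$, and the side condition $PK=0$ ensures the induced differential on invariant forms is again $d_{\nabla,H}$.) Either route closes the gap; as written, your proof does not.
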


Using a connection we can express invariant differential forms on $X$ in terms of differential forms on $M$. Let $A$ be a connection for the principal ${\rm O}(2)$-bundle $X_\xi \to M$. We can think of $A$ as a $V$-valued invariant $1$-form on $X$ such that the composition $V \to TX \buildrel A \over \to V$ is the identity. Here we think of $V$ as the vertical subbundle ${\rm Ker}(\pi_*)$ of $TX$. The curvature of $A$ is a $V$-valued $2$-form $F \in \Omega^2(M,V)$ such that $dA = \pi^*(F)$. The curvature is closed and thus defines a class $[F] \in H^2(M,V)$. It is not hard to see that $[F]$ is the image of the twisted Chern class under the change of coefficients $\mathbb{Z}_\xi \to \mathbb{Z}_\xi \otimes \mathbb{R} = V$.\\

Let $\omega \in \Gamma_{{\rm inv}}( S^{i,A}_L(X))$ be an invariant form. Using a connection $A$ we can uniquely decompose $\omega$ as $\omega = \pi^*(\omega_i) + \pi^*(\omega_{i-1}) \wedge A$ where $\omega_i \in \Gamma( S^{i,A}_L(M) )$ and $\omega_{i-1} \in \Gamma( S^{i-1,A \otimes V}_L(M))$. Therefore we have an isomorphism
\begin{equation}\label{decom}
\Gamma_{{\rm inv}}( S^{i,A}_L(X)) = \Gamma( S^{i,A}_L(M) ) \oplus \Gamma( S^{i-1,A \otimes V}_L(M)).
\end{equation}
Next let $H \in \Omega^3(X,L)$ be an invariant $L$-valued closed $3$-form. We can similarly decompose $H$ into $H = \pi^*(H_3) + \pi^*(H_2) \wedge A$ where $H_3 \in \Omega^3(M,L)$ and $H_2 \in \Omega^2(M,L \otimes V)$. Under the isomorphism (\ref{decom}) we find that the twisted differential $d_{\nabla,H}$ becomes
\begin{equation*}
d_{\nabla,H} \left( \begin{matrix} \omega_i \\ \omega_{i-1} \end{matrix} \right) = \left( \begin{matrix} d\omega_i + H_3 \wedge \omega_i + (-1)^{i-1} F \wedge \omega_{i-1} \\ d\omega_{i-1} + H_3 \wedge \omega_{i-1} + (-1)^i H_2 \wedge \omega_i \end{matrix} \right).
\end{equation*}


\subsection{T-duality in terms of differential forms}\label{tddf}

Let $\epsilon = w_1(L) \in H^1(M,\mathbb{Z}_2)$ be the first Stiefel-Whitney class of the flat orthogonal line bundle $L$. We now suppose that $X$ is equipped with an $\epsilon$-twisted gerbe $\mathcal{G}$ classified by a pair $(\pi^* \alpha , h) \in H^1(X,\mathbb{Z}_2) \times H^3(X,\mathbb{Z}_\epsilon)$, where $\alpha \in H^1(M,\mathbb{Z}_2)$. In other words $(\pi : X \to M , \epsilon , (\alpha , h))$ is a T-duality background. Let $H \in \Omega^3(X,L)$ be a $3$-form representing the image of $h$ in real cohomology. We can take $H$ to be an invariant representative. Thus $H$ has a decomposition of the form
\begin{equation}\label{defh}
H = \pi^*(H_3) + \pi^*(\hat{F}) \wedge A
\end{equation}
where $H_3 \in \Omega^3(M,L)$, $\hat{F} \in \Omega^2(M , L \otimes V^*)$. In Section \ref{invco} $\hat{F}$ was instead denoted by $H_2$. The change in notation reflects the fact that we are heading towards T-duality. The condition that $H$ is closed is equivalent to
\begin{eqnarray}
d_{L \otimes V^*} \hat{F} &=& 0 \label{fhat} \\
d_{L} H_3 + \hat{F} \wedge F &=& 0 \label{ffhat}
\end{eqnarray}
where for a flat line bundle $W$ we let $d_W$ denote the differential on $W$-valued forms. We see from (\ref{fhat}) that $\hat{F}$ determines a cohomology class $[\hat{F}] \in H^2(M , L \otimes V^*)$ and by (\ref{ffhat}) we have $[\hat{F}] \wedge [F] = 0 \in H^4(M,L)$.\\

Let $(\hat{\pi} : \hat{X} \to M , \epsilon , (\hat{\alpha} , \hat{h}))$ be a T-duality background which is T-dual to $(X,\pi,\epsilon,(\alpha , h))$. Let $\hat{\xi}$ be the Stiefel-Whitney class of $\hat{X} \to M$ and $\hat{c} = c_1(\hat{X})$ the twisted Chern class. Let $\mathbb{Z}_{\hat{\xi}}$ be the $\mathbb{Z}$-valued local system determined by $\hat{\xi}$ and $\hat{V} = \mathbb{Z}_{\hat{\xi}} \otimes \mathbb{R}$ the corresponding flat orthogonal bundle. 

\begin{proposition}\label{triple}
Let $(\hat{\pi} : \hat{X} \to M , \epsilon , (\hat{\alpha} , \hat{h}))$ be a T-duality background T-dual to $(X,\pi,\epsilon,(\alpha , h))$. There is a connection $\hat{A}$ on $\hat{X}$ with curvature $\hat{F}$ and such that $\hat{h}$ is represented by the invariant $3$-form $\hat{H} \in \Omega^3(\hat{X},L)$ defined as follows
\begin{equation}\label{defhhat}
\hat{H} = \hat{\pi}^*(H_3) + \hat{\pi}^* (F) \wedge \hat{A}.
\end{equation}
\begin{proof}
By definition of T-duality we have that $\hat{c} = \pi_*(h)$. However it follows from (\ref{defh}) that $\hat{F}$ is a representative for the image of $\pi_*(h)$ in real cohomology. Thus $\hat{F}$ represents $\hat{c}$ in real cohomology. It is straightforward to show that we can find a connection $\hat{A} \in \Omega^1(\hat{X},\hat{V})$ for $\hat{X}$ such that $\hat{F}$ is the curvature of $\hat{A}$, $d\hat{A} = \hat{\pi}^* \hat{F}$.

Define an invariant $3$-form $\hat{H} \in \Omega^3(\hat{X},L)$ by (\ref{defhhat}). Let $\hat{h}_\mathbb{R} \in H^3(\hat{X},L)$ be the image of $\hat{h}$ in real cohomology and define $d = \hat{h}_\mathbb{R} - [\hat{H}]$. We find that $\hat{\pi}_*(d) = 0$ and $\hat{p}^* d = 0$. Lemma \ref{mainlem} can easily be adapted to real coefficients (in fact the proof is considerably simpler over $\mathbb{R}$), so we find that there exists a class $e \in H^1(M,\hat{V})$ such that $d = \hat{\pi}^*( c \smallsmile e)$. Let $e' \in \Omega^1(M,\hat{V})$ be a representative for $e$. We find that $\hat{h}_\mathbb{R} = [\hat{H} + \hat{\pi}^*( F \wedge e')]$. Let us now replace the connection $\hat{A}$ by $\hat{A}' = \hat{A} + \hat{\pi}^* e'$. Since $e'$ is closed we still have that $\hat{F}$ is the curvature of $\hat{A}'$ but now $\hat{\pi}^*(H_3) + \hat{\pi}^*(F) \wedge \hat{A}'$ is a representative for $\hat{h}_\mathbb{R}$.
\end{proof}
\end{proposition}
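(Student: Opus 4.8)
The plan is to produce the T-dual connection $\hat A$ by Chern--Weil theory so that its curvature is exactly the form $\hat F$ occurring in the decomposition (\ref{defh}) of $H$, to define $\hat H$ by (\ref{defhhat}), and then to adjust $\hat A$ by a closed $1$-form pulled back from $M$ so that $\hat H$ represents the correct class. The adjustment is governed by the real-coefficient analogue of Lemma \ref{mainlem}, exactly as in the uniqueness part of Proposition \ref{tdprop}.

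First I would note that, by the relation $\hat\xi = \xi + \epsilon$ from (T2), the flat orthogonal line bundle $\hat V$ is canonically $L \otimes V^*$, so $\hat F \in \Omega^2(M, L \otimes V^*)$ is precisely a $\hat V$-valued $2$-form, the right kind of object to serve as a curvature for the principal ${\rm O}(2)$-bundle $\hat X_{\hat\xi} \to M$. Fibre integration of $H = \pi^*(H_3) + \pi^*(\hat F) \wedge A$ over the circle gives $\pi_*[H] = [\hat F]$, and since $\pi_*(h) = \hat c$ by (T4), $\hat F$ represents the image of $\hat c$ in $H^2(M,\hat V)$. As the connections on $\hat X_{\hat\xi} \to M$ form an affine space over $\Omega^1(M,\hat V)$ whose curvatures realize every closed form in that de Rham class, I may choose $\hat A$ with $d\hat A = \hat\pi^* \hat F$. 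Put $\hat H := \hat\pi^*(H_3) + \hat\pi^*(F) \wedge \hat A$; a short computation using $d_V F = 0$, $d\hat A = \hat\pi^*\hat F$ and the closedness relations (\ref{fhat})--(\ref{ffhat}) for $H$ (noting that $F$ and $\hat F$ commute, both being of even degree) gives $d\hat H = \hat\pi^*(d_L H_3 + F \wedge \hat F) = 0$, so $[\hat H] \in H^3(\hat X, L)$ is defined.

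Set $d := \hat h_{\mathbb R} - [\hat H]$, where $\hat h_{\mathbb R}$ is the image of $\hat h$ in real cohomology. Fibre integration gives $\hat\pi_*[\hat H] = [F]$, which agrees with $\hat\pi_* \hat h_{\mathbb R}$ because $\hat\pi_*(\hat h) = c$ by (T4), so $\hat\pi_* d = 0$. For the other vanishing I pull back to the correspondence space $C$: condition (T5) gives $\hat p^* \hat h_{\mathbb R} = p^*[H]$, while using $d(p^*A) = q^* F$, $d(\hat p^* \hat A) = q^* \hat F$ and $\pi\circ p = q = \hat\pi\circ\hat p$ one checks the identity
\begin{equation*}
p^* H - \hat p^* \hat H = -\, d\bigl( p^*(A) \wedge \hat p^*(\hat A) \bigr)
\end{equation*}
in $\Omega^\bullet(C, L)$ (the $V \otimes \hat V$-valued form on the right being $L$-valued via the pairing $V \otimes V^* = {\mathbb R}$), whence $\hat p^* d = 0$. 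I expect this identity on $C$ to be the step needing the most care: the signs and degree-commutations must be tracked precisely, and the orientation line bundles $V$, $\hat V$, $L$ have to be matched up so that every wedge product is genuinely $L$-valued; the rest is essentially bookkeeping.

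Finally, with $\hat\pi_* d = 0$ and $\hat p^* d = 0$, I invoke the real-coefficient version of Lemma \ref{mainlem} (whose proof is the same as, indeed simpler than, the integral one) to obtain $e \in H^1(M, \hat V)$ with $d = \hat\pi^*([F] \smallsmile e)$. Choosing a closed representative $e' \in \Omega^1(M,\hat V)$ of $e$ and replacing $\hat A$ by $\hat A' := \hat A + \hat\pi^* e'$ — still a connection with curvature $\hat F$, since $e'$ is closed — I get $\hat\pi^*(H_3) + \hat\pi^*(F) \wedge \hat A' = \hat H + \hat\pi^*(F \wedge e')$, which now represents $\hat h_{\mathbb R}$. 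Renaming $\hat A'$ as $\hat A$ completes the proof.
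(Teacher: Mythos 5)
Your proposal is correct and follows essentially the same route as the paper: choose $\hat{A}$ with curvature $\hat{F}$ (which represents $\hat{c}$ by (T4) and fibre integration of $H$), define $\hat{H}$ by (\ref{defhhat}), show the discrepancy class $d$ satisfies $\hat{\pi}_*d = 0$ and $\hat{p}^*d = 0$, apply the real-coefficient version of Lemma \ref{mainlem}, and absorb the correction $\hat{\pi}^*(F\smallsmile e)$ by shifting $\hat{A}$ by a closed $1$-form. The only difference is that you spell out details the paper leaves implicit — the identification $\hat{V}\cong L\otimes V$, the closedness of $\hat{H}$, and the correspondence-space identity $\hat{p}^*\hat{H} - p^*H = d(A\wedge\hat{A})$ (which the paper records separately as Equation (\ref{hhhat})) — all of which check out.
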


\begin{definition}
Let $(X,\pi,\epsilon,(\alpha,h))$ be a T-duality background and $ \linebreak(\hat{H},\hat{\pi},\epsilon,(\hat{\alpha},\hat{h}))$ a T-dual. A {\em differential T-duality triple} is a triple $(A,\hat{A},H_3)$ where
\begin{itemize}
\item{$A$ is connection on $X$}
\item{$\hat{A}$ is a connection on $\hat{X}$}
\item{$H_3$ is an $L$-valued $3$-form on $M$}
\end{itemize}
such that if $F,\hat{F}$ are the curvatures of $A,\hat{A}$ then
\begin{itemize}
\item{$d_L H_3 + \hat{F} \wedge F = 0$}
\item{$H = \pi*(H_3) + \pi^*(\hat{F}) \wedge A$ represents $h$ in real cohomology}
\item{$\hat{H} = \hat{\pi}^*(H_3) + \hat{\pi}^*(F) \wedge \hat{A}$ represents $\hat{h}$ in real cohomology}
\end{itemize}
\end{definition}

Proposition \ref{triple} can be rephrased as saying that for any pair of T-dual backgrounds a differential T-duality triple $(A,\hat{A},H_3)$ exists. For a differential T-duality triple we define on the correspondence space $C = X \times_M \hat{X}$ an $L$-valued $2$-form given by
\begin{equation}\label{bdef}
\mathcal{B} = A \wedge \hat{A}.
\end{equation}
It follows that
\begin{equation}\label{hhhat}
\hat{p}^* (\hat{H}) - p^*(H) = d \mathcal{B}.
\end{equation}


\subsection{T-duality of twisted cohomology}\label{tdtc}

As in Section \ref{tddf} let $(X,\pi,\epsilon,(\alpha,h))$, $(\hat{X},\hat{\pi},\epsilon,(\hat{\alpha},\hat{h}))$ be T-dual backgrounds and $(A,\hat{A},H_3)$ a T-duality triple. To such data we will construct a T-duality transformation $T : \Gamma( S^{i,\alpha}_\epsilon(X)) \to \Gamma( S^{i-1,\hat{\alpha}}_\epsilon(\hat{X}))$ and prove that it induces an isomorphism of twisted cohomologies.

\begin{definition}
The {\em T-duality transformation} $T : \Gamma( S^{i,\alpha}_\epsilon(X)) \to \Gamma( S^{i-1,\hat{\alpha}}_\epsilon(\hat{X}))$ associated to a differential T-duality triple is given by
\begin{equation}\label{ttran}
T \omega = \int_{C / \hat{X}} e^{-\mathcal{B}} \wedge p^*( \omega )
\end{equation}
\end{definition}
where $\mathcal{B}$ is defined as in (\ref{bdef}) and the integral $\int_{C / \hat{X}}$ in (\ref{ttran}) is a fibre integration. If $f : Y \to Z$ is a circle bundle with Stiefel-Whitney class $w \in H^1(Z,\mathbb{Z}_2)$ and $V = \mathbb{Z}_w \otimes \mathbb{R}$ is the associated orthogonal flat line bundle then the fibre integration is a map $\int_{Y / Z} : \Omega^i(Y,A) \to \Omega^{i-1}(Z,A \otimes V)$, where $A$ any auxiliary flat line bundle. Let $A$ be a connection form for $Y \to Z$ which as usual can be viewed as an invariant $V$-valued $1$-form on $Y$. Then fibre integration has the following properties
\begin{eqnarray*}
\int_{Y / Z} f^*(\omega) \wedge A &=& \omega, \\
\int_{Y / Z} f^*(\omega') &=& 0.
\end{eqnarray*}

In fact these properties completely determine $\int_{Y / Z}$ on the subspace of invariant forms which suffices for our purposes. In addition we observe the following properties
\begin{eqnarray*}
\int_{Y / Z} f^*(a) \wedge b &=& f^*(a) \wedge \int_{Y / Z} b \\
d \int_{Y / Z} a &=& \int_{Y / Z} da.
\end{eqnarray*}
We see immediately from these properties and Equation (\ref{hhhat}) that
\begin{equation*}
d_{\nabla , \hat{H}} (T \omega) = T (d_{\nabla , H} \omega)
\end{equation*}
for any $\omega \in \Gamma( S^{i,\alpha}_\epsilon(X))$. Thus $T$ descends to a map $T_* : H^{i,\alpha,H}_\epsilon(X) \to H^{i-1,\hat{\alpha},\hat{H}}_\epsilon(\hat{X})$ in cohomology.

\begin{proposition}\label{tdciso}
The map $T_* : H^{i,\alpha,H}_\epsilon(X) \to H^{i-1,\hat{\alpha},\hat{H}}_\epsilon(\hat{X})$ is an isomorphism.
\begin{proof}
It is straightforward to see that $T$ sends invariant forms to invariant forms, so defines a similar map $T_* : H^{i,\alpha,H}_{\epsilon,{\rm inv}}(X) \to H^{i-1,\hat{\alpha},\hat{H}}_{\epsilon,{\rm inv}}(\hat{X})$ between invariant cohomologies. By Proposition \ref{tciso} it suffices to show that this map is an isomorphism. In fact we can prove there is a commutative diagram
\begin{equation*}\xymatrix{
H^{i,\alpha,H}_{\epsilon,{\rm inv}}(X) \ar[dr]^{\simeq} \ar[rr]^{T_*} & & H^{i-1,\hat{\alpha},\hat{H}}_{\epsilon,{\rm inv}}(\hat{X}) \ar[dl]_{\hat{T}_*} \\
& H^{i-2,\alpha+\epsilon,H}_{\epsilon,{\rm inv}}(X) & 
}
\end{equation*}
Here $\hat{T}$ is the T-duality transformation obtained by reversing the roles of $X$ and $\hat{X}$ while $H^{i,\alpha,H}_{\epsilon,{\rm inv}}(X) \simeq H^{i-2,\alpha+\epsilon,H}_{\epsilon,{\rm inv}}(X)$ is the canonical isomorphism that follows directly from the definition of $S^{i,\alpha}_\epsilon(X)$.

Recall the isomorphism (\ref{decom}) relating invariant forms to forms on the base. Under this isomorphism we find that $T$ is the map $T : \Gamma( S^{i,\alpha}_\epsilon(M) ) \oplus \Gamma( S^{i-1,\alpha \otimes V}_\epsilon(M)) \to \Gamma( S^{i-1,\hat{\alpha}}_\epsilon(M) ) \oplus \Gamma( S^{i-2,\hat{\alpha} \otimes V}_\epsilon(M))$ given by
\begin{equation*}
T \left( \begin{matrix} \omega_i \\ \omega_{i-1}  \end{matrix} \right) = \left( \begin{matrix} \omega_{i-1} \\ \omega_i \end{matrix} \right).
\end{equation*}
The above commutative diagram is now trivial to verify and proves that $T_*$ is an isomorphism.
\end{proof}
\end{proposition}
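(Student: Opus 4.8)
The plan is to reduce first to invariant cohomology and then to a direct computation on the base $M$. I would begin by checking that the T-duality transformation $T$ of (\ref{ttran}) carries invariant forms to invariant forms: the $2$-form $\mathcal{B} = A\wedge\hat{A}$ is built from the invariant connection $1$-forms $A$ and $\hat{A}$, and a fibre integration $\int_{C/\hat{X}}$ of an invariant form is again invariant. Since it has already been shown, using (\ref{hhhat}), that $d_{\nabla,\hat{H}}\circ T = T\circ d_{\nabla,H}$, it follows that $T$ restricts to a chain map on the complexes of invariant forms and so descends to $T_* : H^{i,\alpha,H}_{\epsilon,{\rm inv}}(X)\to H^{i-1,\hat{\alpha},\hat{H}}_{\epsilon,{\rm inv}}(\hat{X})$. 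Applying Proposition \ref{tciso} to both $X$ and $\hat{X}$, the maps from invariant twisted cohomology to full twisted cohomology are isomorphisms, so it is enough to prove that this restricted map is an isomorphism.

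The core step is to make $T$ explicit through the decomposition (\ref{decom}). Writing an invariant form on $X$ as $\omega = \pi^*(\omega_i) + \pi^*(\omega_{i-1})\wedge A$ with $\omega_i\in\Gamma(S^{i,\alpha}_\epsilon(M))$ and $\omega_{i-1}\in\Gamma(S^{i-1,\alpha\otimes V}_\epsilon(M))$, I would use $\mathcal{B}\wedge\mathcal{B} = A\wedge\hat{A}\wedge A\wedge\hat{A} = 0$, so that $e^{-\mathcal{B}} = 1 - A\wedge\hat{A}$, and multiply out $e^{-\mathcal{B}}\wedge p^*(\omega)$; using $A\wedge A = 0$ one is left with $q^*(\omega_i) + q^*(\omega_{i-1})\wedge A - A\wedge\hat{A}\wedge q^*(\omega_i)$. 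Since $A$ is a connection form for the circle bundle $C\to\hat{X}$ and $q^*(\omega_k) = \hat{p}^*\hat{\pi}^*(\omega_k)$, the fibre-integration identities $\int_{C/\hat{X}}\hat{p}^*(\cdot) = 0$ and $\int_{C/\hat{X}}\hat{p}^*(\cdot)\wedge A = (\cdot)$ then give, after reorganizing (with signs that play no role in the sequel), $T\omega = \hat{\pi}^*(\omega_{i-1}) + \hat{\pi}^*(\omega_i)\wedge\hat{A}$. Applying the decomposition (\ref{decom}) for $\hat{X}$ together with the T-duality relations $\hat{\epsilon}=\epsilon$, $\hat{\xi}=\xi+\epsilon$, $\hat{\alpha}=\alpha+\xi$ — which, since $L$ is orthogonal, furnish the canonical identifications $S^{i-1,\alpha\otimes V}_\epsilon(M)\cong S^{i-1,\hat{\alpha}}_\epsilon(M)$ and $S^{i,\alpha}_\epsilon(M)\cong S^{i-2,\hat{\alpha}\otimes\hat{V}}_\epsilon(M)$ — this exhibits $T$ as the swap $(\omega_i,\omega_{i-1})\mapsto(\omega_{i-1},\omega_i)$.

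The conclusion is then immediate: the swap interchanges the two summands and is visibly bijective, so $T_*$ is an isomorphism on invariant cohomology and hence, via Proposition \ref{tciso}, on full twisted cohomology. Equivalently, running the dual transformation $\hat{T}$ (with the roles of $X$ and $\hat{X}$ interchanged) and composing realizes the canonical degree-shift isomorphism $H^{i,\alpha,H}_{\epsilon,{\rm inv}}(X)\simeq H^{i-2,\alpha+\epsilon,H}_{\epsilon,{\rm inv}}(X)$, which yields the commutative triangle. I expect the only genuine work to be the bookkeeping in the middle paragraph: tracking the flat line bundles $V$, $\hat{V}$, $L$ and their local systems through fibre integration and through (\ref{decom}), and pinning down the signs. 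None of this is deep, and the two substantive inputs — the chain-map property and Proposition \ref{tciso} — are already in hand.
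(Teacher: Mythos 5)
Your proposal is correct and follows essentially the same route as the paper: reduce to invariant cohomology via Proposition \ref{tciso}, then use the decomposition (\ref{decom}) to exhibit $T$ as the swap $(\omega_i,\omega_{i-1})\mapsto(\omega_{i-1},\omega_i)$, which is a bijective chain map. Your explicit expansion of $e^{-\mathcal{B}}\wedge p^*(\omega)$ and the fibre-integration identities just fills in the computation the paper leaves implicit, and your closing remark about $\hat{T}$ recovers the paper's commutative triangle.
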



\subsection{T-duality of conformal Courant algebroids}\label{tdcca}

Associated to the T-duality background $(X,\pi,\epsilon,(\alpha,h))$ is an exact conformal Courant algebroid $E$ on $X$. Up to isomorphism $E$ is given by $E = TX \oplus (L \otimes T^*X)$ with the twisted Dorfman bracket $[ \, , \, ]_H$. We observe that since $H$ is invariant we can speak of invariant sections of $E$ and that the space of invariant sections $\Gamma_{\rm inv}(E)$ is Closed under $[ \, , \, ]_H$. It is not hard to see that the invariant sections thus define a conformal Courant algebroid $\mathcal{E}(X,H)$ on $M$.\\

Let $(X,\pi,\epsilon,(\alpha,h))$, $(\hat{X},\hat{\pi},\epsilon,(\hat{\alpha},\hat{h}))$ be T-dual backgrounds. We will show that there is an isomorphism $\mathcal{E}(X,H) \simeq \mathcal{E}(\hat{X},\hat{H})$. More precisely let $(A,\hat{A},H_3)$ be a differential T-duality triple for $X,\hat{X}$. The data of the triple will define a bundle isomorphism $\phi : \mathcal{E}(X,H) \to \mathcal{E}(\hat{X},\hat{H})$ and we show this is an isomorphism of conformal Courant algebroids. Indeed we can use the connection $A$ to split the tangent bundle of $X$, $TX = TM \oplus V$ and similarly use $\hat{A}$ to obtain a splitting $T\hat{X} = TM \oplus \hat{V}$. Under these splittings we have vector bundle isomorphisms 
\begin{eqnarray*}
\mathcal{E}(X,H) &=& TM \oplus V \oplus (L \otimes V) \oplus (L \otimes T^*M), \\
\mathcal{E}(\hat{X},\hat{H}) &=& TM \oplus \hat{V} \oplus (L \otimes \hat{V}) \oplus (L \otimes T^*M).
\end{eqnarray*}
Next recall that by the definition of T-duality we have an isomorphism $\hat{V} = V \otimes L$ as flat line bundles and similarly $L \otimes \hat{V} = V$. Using these identifications we immediately get a bundle isomorphism $\phi : \mathcal{E}(X,H) \to \mathcal{E}(\hat{X},\hat{H})$.

\begin{proposition}\label{ccaiso}
The map $\phi : \mathcal{E}(X,H) \to \mathcal{E}(\hat{X},\hat{H})$ is an isomorphism of conformal Courant algebroids.
\begin{proof}
Recall as in Section \ref{twco} that there is an action $\gamma : E \otimes S^{i,\alpha}_\epsilon(X) \to S^{i-1,\alpha}_\epsilon(X)$. Clearly this action descends to an action of $\mathcal{E}(X,H)$ on invariant forms. A straightforward computation shows that for any $\omega \in \Gamma_{\rm inv}( S^{i,\alpha}_\epsilon(X) )$ and any section $a \in \Gamma(\mathcal{E}(X,H))$ we have
\begin{equation}\label{tandgam}
T( \gamma_a \omega) = \gamma_{\phi(a)} T\omega.
\end{equation}
Next recall from Equation (\ref{db}) that the Dorfman bracket is a derived bracket:
\begin{equation*}
\gamma_{[a,b]_H} = [ [d_{\nabla,H} , \gamma_a] , \gamma_b ].
\end{equation*}
We take $a,b$ to be invariant, that is sections of $\mathcal{E}(X,H)$, apply $T$ to both sides of this equation and use (\ref{tandgam}) to arrive at
\begin{equation*}
\gamma_{\phi([a,b]_H)} = [ [d_{\nabla,\hat{H}} , \gamma_{\phi(a)}] , \gamma_{\phi(b)} ].
\end{equation*}
Using (\ref{db}) again we have that the right hand side of the above equation is equal to $\gamma_{[\phi(a),\phi(b)]_{\hat{H}}}$ so we have shown $\phi( [a,b]_H ) = [\phi(a),\phi(b)]_{\hat{H}}$. To complete the proof one must show that $\phi$ preserves the bilinear pairings and anchors. This is straightforward to check.
\end{proof}
\end{proposition}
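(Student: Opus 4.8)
The plan is to transport the entire structure of $\mathcal{E}(X,H)$ across the T-duality transformation $T$, using that the Dorfman bracket is the derived bracket (\ref{db}) built from the Clifford-type action $\gamma$ and the twisted differential $d_{\nabla,H}$, both of which $T$ intertwines. As a preliminary I would record that on \emph{invariant} forms $T$ is already a bijection, not merely a quasi-isomorphism: the proof of Proposition \ref{tdciso} identifies $T$, under the decomposition (\ref{decom}) for $X$ and its counterpart for $\hat{X}$, with the swap $(\omega_i,\omega_{i-1}) \mapsto (\omega_{i-1},\omega_i)$. Hence $T$ carries $\Gamma_{\rm inv}(S^{\bullet,\alpha}_\epsilon(X))$ isomorphically onto $\Gamma_{\rm inv}(S^{\bullet,\hat{\alpha}}_\epsilon(\hat{X}))$, and from Section \ref{tdtc} (via (\ref{hhhat}) and the basic properties of fibre integration) we already have $d_{\nabla,\hat{H}} \circ T = T \circ d_{\nabla,H}$ on invariant forms.

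The key step is the identity (\ref{tandgam}), $T(\gamma_a \omega) = \gamma_{\phi(a)}(T\omega)$ for invariant $a$ and invariant $\omega$. I would prove it by splitting $a$ along the connection-induced decomposition $\mathcal{E}(X,H) = TM \oplus V \oplus (L \otimes V) \oplus (L \otimes T^*M)$ and checking the four cases: a horizontal vector field pulled back from $M$, the vertical generator dual to $A$, a horizontal $L$-valued $1$-form, and the vertical $L$-valued $1$-form. Writing $T\omega = \int_{C/\hat{X}} e^{-\mathcal{B}} \wedge p^*(\omega)$ as in (\ref{ttran}) with $\mathcal{B} = A \wedge \hat{A}$ as in (\ref{bdef}), and using that the fibre of $C \to \hat{X}$ is the circle of $X \to M$ with vertical form $p^*A$, one finds that interior product with the vertical vector of $X$ becomes, after the integration and thanks to the $\hat{A}$ factor in $\mathcal{B}$, exterior multiplication by $\hat{A}$; dually, exterior multiplication by the vertical $1$-form becomes interior product with the vertical vector of $\hat{X}$; and the horizontal operations pass straight through. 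This is exactly the content of the bundle map $\phi$, which is the identity on $TM$ and on $L \otimes T^*M$ and implements the swaps $V \leftrightarrow L \otimes \hat{V}$, $L \otimes V \leftrightarrow \hat{V}$ coming from the flat isomorphisms $\hat{V} = V \otimes L$ and $V = L \otimes \hat{V}$. I expect this bookkeeping -- tracking the signs, the interaction of $e^{-\mathcal{B}}$ with $i_X$ and with $\xi \wedge$, and the flat line-bundle identifications -- to be the only real obstacle in the proof.

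Granting (\ref{tandgam}), the bracket statement is immediate: applying $T$ to (\ref{db}) for invariant $a,b$ acting on an invariant $\omega$, and using the two intertwinings above, yields $\gamma_{\phi([a,b]_H)}(T\omega) = [[d_{\nabla,\hat{H}},\gamma_{\phi(a)}],\gamma_{\phi(b)}](T\omega) = \gamma_{[\phi(a),\phi(b)]_{\hat{H}}}(T\omega)$; since $T$ is onto the invariant forms on $\hat{X}$ and $c \mapsto \gamma_c$ is injective on sections (because $\gamma_c \gamma_d + \gamma_d \gamma_c = \langle c , d \rangle$ and $\langle \, , \, \rangle$ is non-degenerate), we conclude $\phi([a,b]_H) = [\phi(a),\phi(b)]_{\hat{H}}$. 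For the pairing, the same relation $\gamma_c \gamma_d + \gamma_d \gamma_c = \langle c , d \rangle$, together with the fact that $T$ also commutes with the auxiliary $L$-action on $S^\bullet_L$, forces $\langle \phi(a) , \phi(b) \rangle = \langle a , b \rangle$ under the canonical identification of the two copies of $L$ on $M$. Finally $\phi$ intertwines the anchors because it fixes the $TM$ summand and sends every other summand into the kernel of the anchor $\mathcal{E}(\hat{X},\hat{H}) \to TM$. This completes the verification.
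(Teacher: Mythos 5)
Your proposal follows the same route as the paper: establish the intertwining $T(\gamma_a\omega)=\gamma_{\phi(a)}T\omega$ on invariant forms, combine it with $T\circ d_{\nabla,H}=d_{\nabla,\hat H}\circ T$ and the derived-bracket formula (\ref{db}), and then strip off $\gamma$ and $T\omega$ to get $\phi([a,b]_H)=[\phi(a),\phi(b)]_{\hat H}$. The only difference is that you spell out the steps the paper labels ``straightforward'' (the case-by-case check of (\ref{tandgam}) along the splitting, the injectivity of $c\mapsto\gamma_c$ via the Clifford relation, and the pairing and anchor checks), which is a correct and welcome elaboration rather than a different argument.
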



\section{Twisted $KR$-theory}\label{twkr}

We have already remarked that for string theory defined on an orientifold the flux for the Ramond-Ramond fields is thought to lie in twisted $KR$-theory. Complementing this it is thought that D-branes and orientifold planes carry Ramond-Ramond charges which are also elements of twisted $KR$-theory. The connection between orientifolds and $KR$-theory was observed in \cite{wit}, \cite{bgs} and a proposed definition of twisted $KR$-theory on orientifolds with torsion $H$-flux is given in \cite{braste}. The case of non-torsion $H$-flux and the refinement to differential twisted $KR$-theory can be found in \cite{dfm0}. A proper definition of twisted $KR$-theory is not presented there but it seems reasonable to expect that a definition along the lines given in \cite{fht} for complex $K$-theory easily extends to the $KR$ setting.

Based on this link between orientifolds and $KR$ theory we expect that T-dual backgrounds have isomorphic twisted $KR$-theories. More precisely we expect that the T-duality isomorphism in twisted cohomology $T_* : H^{i,\alpha,H}_\epsilon(X) \linebreak \to H^{i-1,\hat{\alpha},\hat{H}}_\epsilon(\hat{X})$ defined in Section \ref{tdtc} lifts to an isomorphism of the form $T_* : KR^{i,\alpha,h}_\epsilon(X) \to KR^{i-1,\alpha,\hat{h}}_\epsilon(\hat{X})$ and that there should be a Chern character relating twisted $KR$-theory to our twisted cohomology. In the case $\epsilon = 0$ the twisted $K$-theory reduces to complex $K$-theory with twists by $H^1(X,\mathbb{Z}_2) \times H^3(X,\mathbb{Z})$ as defined in \cite{fht}. For this case a T-duality isomorphism in twisted $K$-theory has been proved for principal circle bundles \cite{bem}, \cite{bunksch}, principal torus bundles \cite{bhm1}, \cite{brs}, general circle bundles \cite{bar} and general affine torus bundles \cite{bar1}. 

As will be shown in \cite{bar1} the case of affine torus bundles has a subtlety in that if $(\pi : X \to M , (\alpha , h)),(\hat{\pi} : \hat{X} \to M , (\hat{\alpha} , \hat{h}))$ are T-dual rank $n$ affine torus bundles then there is an isomorphism in twisted $K$-theory of the form $K^{i,\alpha,h}(X) \simeq K^{i-n,\hat{\alpha}, \hat{h} + W_3(V) + \beta( w_1(V) \alpha)}(\hat{X})$ where $V$ is the vertical bundle of $\pi : X \to M$ and $\beta : H^2(M,\mathbb{Z}_2) \to H^3(M,\mathbb{Z})$ is the Bockstein homomorphism. If $V$ satisfies the topological constraint $W_3(V) + \beta( w_1(V) \alpha) = 0$ then we get an honest T-duality isomorphism $K^{i,\alpha,h}(X) \simeq K^{i-n,\hat{\alpha}, h}(\hat{X})$. Alternatively it is possible to modify the definition of T-duality slightly so that the expression $\hat{h} + W_3(V) + \beta( w_1(V) \alpha)$ gets replaced by simply $\hat{h}$. At any rate in the case where $\epsilon$ is non-zero we expect that there will be a similar isomorphism in twisted $KR$-theory. In fact a proof along the lines of \cite{bar} ought to easily generalize to twisted $KR$-theory as soon as one develops Mayer-Vietoris and a push-forward. Rather than attempt to develop the necessary tools here we will simply provide some evidence to back up our claims.\\

For $\epsilon \in H^1(X,\mathbb{Z}_2)$ let $X_\epsilon \to X$ be the double cover and $KR^i_\epsilon(X)$ the $KR$-theory with respect to the corresponding involution. We denote the involution by $\sigma : X_\epsilon \to X_\epsilon$. The twisted cohomology groups $H^{i,\alpha,H}_\epsilon(X)$ defined in Section \ref{twco} are $4$-periodic whereas the $KR$-groups are $8$-periodic in general. On the other hand we are considering the special case of a free involution so some simplification should occur. As an indication of this we have the following.
\begin{proposition}
Suppose $\beta(\epsilon) = 0 \in H^2(X,\mathbb{Z})$ where $\beta$ is the Bockstein homomorphism. Then $KR^{i+4}_\epsilon(X) = KR^i_\epsilon(X)$.
\begin{proof}
Let $L$ be the flat orthogonal line bundle on $X$ corresponding to $\epsilon$. The condition $\beta(\epsilon) = 0$ is equivalent to the statement that the complexification $L \otimes \mathbb{C}$ is topologically trivial. Under $\mathbb{Z}_2 \to {\rm U}(1)$ we have that $L \otimes \mathbb{C}$ has a natural Hermitian form preserved by the flat connection. Let $s$ be a unit length section of $L \otimes \mathbb{C}$ on $X$. On the other hand when $L$ is pulled back to $X_\epsilon$ we can find a unit length covariantly constant section $s'$. Writing $s' = f s$ we see that $f : X_\epsilon \to {\rm U}(1)$ has the property $\sigma^*(f) = -f$. The trivial bundle $X_\epsilon \times \mathbb{C}$ on $X_\epsilon$ admits a lift $\tau$ of $\sigma$ defined by $\tau( x , z) = (\sigma(x) , f(x) \overline{z} )$. We see that $\tau$ is anti-linear and $\tau^2 = -1$ so $(X_\epsilon \times \mathbb{C} , \tau )$ defines an element $\tau \in KH^0(X_\epsilon)$ of quaternionic $K$-theory \cite{du}. Recall from \cite{du} that $KH^i(X_\epsilon) = KR^{i+4}(X_\epsilon)$. In particular we can think of $\tau$ as an element of $KR^4(X_\epsilon)$. Multiplication by $\tau$ gives a map $\tau : KR^i(X_\epsilon) \to KR^{i+4}(X_\epsilon)$ which is an isomorphism, since $\tau$ is invertible.
\end{proof}
\end{proposition}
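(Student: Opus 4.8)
The plan is to realise the desired $4$-periodicity as multiplication by an explicit invertible class $u \in KR^4_\epsilon(X)$, where I read $KR^i_\epsilon(X)$ as $KR^i(X_\epsilon)$ for the free involution $\sigma$ on the double cover $p_\epsilon : X_\epsilon \to X$ determined by $\epsilon$.

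Step one is to convert the hypothesis into a statement about line bundles. Let $L$ be the flat orthogonal real line bundle with $w_1(L)=\epsilon$. Its complexification $L_\mathbb{C}=L\otimes\mathbb{C}$ is a complex line bundle with $c_1(L_\mathbb{C})=\beta(\epsilon)$ --- this is exactly the statement that the complexification map $B{\rm O}(1)=K(\mathbb{Z}_2,1)\to B{\rm U}(1)=K(\mathbb{Z},2)$ represents the integral Bockstein of the fundamental class. Hence $\beta(\epsilon)=0$ forces $L_\mathbb{C}$ to be topologically trivial, so it admits a nowhere-zero section; a fibre metric on $L$ (holonomy in ${\rm O}(1)\subset{\rm U}(1)$) induces a flat-connection-preserving Hermitian metric on $L_\mathbb{C}$, and I normalise to a unit section $s$ over $X$. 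Note $s$ need not be covariantly constant.

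Step two is the construction on the cover. The flat bundle $p_\epsilon^*L_\mathbb{C}$ has trivial holonomy, so it carries a covariantly constant unit section $s'$, on which the deck involution acts by the $\pm1$ monodromy of $L$, i.e.\ $\sigma^*s'=-s'$. Writing $s'=fs$ on $X_\epsilon$ yields $f:X_\epsilon\to{\rm U}(1)$ with $\sigma^*f=-f$. I then define an anti-linear lift $\tau$ of $\sigma$ on the trivial bundle $X_\epsilon\times\mathbb{C}$ by $\tau(x,z)=(\sigma(x),f(x)\overline{z})$; a one-line computation using $|f|\equiv1$ and $\sigma^*f=-f$ gives $\tau^2=-1$. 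Thus $(X_\epsilon\times\mathbb{C},\tau)$ is a Quaternionic line bundle and defines a class in Dupont's $KH^0(X_\epsilon)$, which under $KH^j(X_\epsilon)\cong KR^{j+4}(X_\epsilon)$ becomes the class $u\in KR^4_\epsilon(X)$.

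The last step --- and the one I expect to be the \emph{real obstacle} --- is checking that multiplication by $u$ gives an isomorphism $KR^i_\epsilon(X)\to KR^{i+4}_\epsilon(X)$. The geometry is routine once $L_\mathbb{C}$ is known to be trivial; the care is all in the $KR$/quaternionic bookkeeping: fixing conventions so that Dupont's $KH$ is genuinely a degree shift of $KR$, that the product $KR^i_\epsilon(X)\otimes KR^4_\epsilon(X)\to KR^{i+4}_\epsilon(X)$ is the one acting, and that $u$ is a unit. For the latter I would use the dual line bundle equipped with the dual involution $\tau^*$, which is again anti-linear over $\sigma$ with $(\tau^*)^2=-1$, hence a class $u^*\in KR^4_\epsilon(X)$; since $\tau\otimes\tau^*$ is the trivial Real line bundle one gets $u\cdot u^*=1$ in $KR^8_\epsilon(X)=KR^0_\epsilon(X)$, so multiplication by $u$ is invertible, with inverse multiplication by $u^*$ followed by Bott periodicity. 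As $KR^i_\epsilon(X)=KR^i(X_\epsilon)$ this is precisely the claimed $4$-periodicity.
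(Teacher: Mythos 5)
Your proposal is correct and follows essentially the same route as the paper: trivialize $L\otimes\mathbb{C}$ using $\beta(\epsilon)=0$, build the anti-linear lift $\tau(x,z)=(\sigma(x),f(x)\overline{z})$ with $\tau^2=-1$ from the covariantly constant section on the double cover, and invoke Dupont's $KH^j\cong KR^{j+4}$ to multiply by the resulting invertible class. The only difference is that you spell out the invertibility via the dual Quaternionic line bundle, where the paper simply asserts that $\tau$ is invertible.
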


In the case of a double cover $X_\epsilon \to X$ there is a spectral sequence for $KR$-theory generalizing the Atiyah-Hirzebruch spectral sequence in complex $K$-theory. A proof is sketched in \cite{brs} and in \cite{cobr}. In fact such a spectral sequence generalizes to involutions that are not free \cite{dug}. The spectral sequence is also applicable to twisted $KR$-theory, but we will content ourselves with a proof of the untwisted case as an illustration that twisted $KR$-theory is related to our twisted cohomology.

\begin{proposition}\label{specseq}
There is a spectral sequence $E_r^{p,q}$ for $KR^n_\epsilon(X)$ with the property that $E_3^{p,q} = H^p( X , \mathbb{Z}_{(q/2) \epsilon})$ if $q$ is even and $E_3^{p,q} = 0$ if $q$ is odd. Here $\mathbb{Z}_{(q/2) \epsilon}$ is the $\mathbb{Z}$-valued local system corresponding to $(q/2)\epsilon \in H^1(X,\mathbb{Z}_2)$.
\begin{proof}
Let $X^k$ be the $k$-skeleton for a CW-structure on $X$ and set $X^{-1} = \emptyset$. If $X_\epsilon \to X$ is the double cover of $X$ associated to $\epsilon$ we let $X^k_\epsilon$ be the inverse image of $X^k$ in $X_\epsilon$. It is not hard to see that $X_\epsilon \to X$ induces CW-structure on $X_\epsilon$ such that $X_\epsilon^k$ is the $k$-skeleton. In the usual fashion of the Atiyah-Hirzebruch spectral sequence the filtration $X_\epsilon^0 \subseteq X_\epsilon^1 \subseteq \dots \subseteq X_\epsilon$ induces a spectral sequence $\{ E_r^{p,q} , d_r \}$ that abuts to $KR^n_\epsilon(X)$. We find that $E_1^{p,q} = KR^{p+q}(X_\epsilon^p , X_\epsilon^{p-1})$. Next observe that $X^p_\epsilon/X_\epsilon^{p-1}$ and $X^p/X^{p-1}$ are bouquets of spheres. In $X^p/X^{p-1}$ there is a sphere for every $p$-cell of $X$ while in $X^p_\epsilon/X_\epsilon^{p-1}$ there are exactly two spheres for each sphere of $X^p/X^{p-1}$. The involution on $X^p_\epsilon/X^{p-1}_\epsilon$ simply permutes these spheres accordingly. Thus $KR^{p+q}(X_\epsilon^p , X_\epsilon^{p-1})$ can be replaced by 
\begin{equation*}
KR^{p+q}( \amalg (D^p \amalg D^p) , \amalg (S^{p-1} \amalg S^{p-1}) )
\end{equation*}
where we take a copy of $D^p \amalg D^p$ for each $p$-cell of $X$. Here $D^p$ is the $p$-dimensional disc, $S^{p-1}$ the boundary and the involution on $D^p \amalg D^p$ swaps the two discs. By suspension and $(1,1)$-periodicity \cite{at} this can be reduced to $KR^0(\amalg (D^{2p+q} \amalg D^{2p+q}) , \amalg (S^{2p+q-1} \amalg S^{2p+q-1}) )$, except that this time the involution swaps discs and acts as $-1$ in $p+q$ coordinates. It is now easy to see that each $p$-cell contributes a summand of the form $\widetilde{K}^0(S^{2p+q})$ (reduced complex $K$-theory). If $q$ is odd this group is zero while for $q$ even it is $\mathbb{Z}$. 

To work out the next stage in the spectral sequence however we need to think of this group in terms of local systems. If one goes around a loop that exchanges fibres of $X_\epsilon \to X$ then the effect is like that on $\widetilde{K}^0(S^{2p+q})$ which acts on a complex vector bundle $E$ by taking the complex conjugate $\overline{E}$ and then pulling back under the map on $S^{2p+q}$ which acts as $-1$ in $p+q$ coordinates. When $q$ is even we find the effect of such a change is to act on $\widetilde{K}^0(S^{2p+q}) = \mathbb{Z}$ as multiplication by $(-1)^{p+q/2 + p+q} = (-1)^{q/2}$. To proceed to the $E_2$ stage we note that through the Chern character we can compare the spectral sequence $E_r^{p,q}$ with a similar spectral sequence in cohomology. This immediately determines the differential $d_1 : E_1^{p,q} \to E_1^{p+1,q}$. In fact for fixed $q$ we see that $\{ E_1^{p,q} , d_1 \}_{p \in \mathbb{Z}}$ forms the cellular complex on $X$ for the local system $\mathbb{Z}_{(q/2)\epsilon}$ when $q$ is even. Thus we have that $E_2^{p,q} = H^{p}(X,\mathbb{Z}_{(q/2)\epsilon})$ when $q$ even and $E_2^{p,q} = 0$ when $q$ is odd. It follows immediately that the differentials $d_2$ all vanish since they change the parity of $q$, thus the $E_3$ stage coincides with the $E_2$ stage completing the proof.
\end{proof}
\end{proposition}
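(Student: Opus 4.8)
The plan is to build the spectral sequence from the skeletal filtration of the double cover $X_\epsilon$, identify its $E_1$-page via equivariant reductions to ordinary reduced complex $K$-theory of spheres, and then recognise the resulting columns as cellular cochain complexes of the local systems $\mathbb{Z}_{(q/2)\epsilon}$.

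First I would choose a CW-structure on $X$ and pull it back along $p_\epsilon : X_\epsilon \to X$ to get a $\mathbb{Z}_2$-equivariant CW-structure on $X_\epsilon$, so that $X_\epsilon^k := p_\epsilon^{-1}(X^k)$ is a $k$-skeleton and each $p$-cell of $X$ is covered by a free pair of $p$-cells of $X_\epsilon$ interchanged by $\sigma$. Applying the exact couple attached to the filtration $\emptyset = X_\epsilon^{-1} \subseteq X_\epsilon^0 \subseteq X_\epsilon^1 \subseteq \cdots$ for the $\mathbb{Z}_2$-equivariant cohomology theory $KR^*$ (so $KR^n_\epsilon(-) = KR^n(-)$ of the corresponding $\mathbb{Z}_2$-space) yields a spectral sequence with $E_1^{p,q} = KR^{p+q}(X_\epsilon^p, X_\epsilon^{p-1})$ converging to $KR^n_\epsilon(X)$; this is the $KR$-analogue of the Atiyah--Hirzebruch spectral sequence as sketched in \cite{brs}, \cite{cobr}.

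Next I would compute $E_1^{p,q}$. Since $X_\epsilon^p/X_\epsilon^{p-1}$ is an equivariant wedge with two copies of $S^p$ for each $p$-cell of $X$ and $\sigma$ swapping each pair, $E_1^{p,q}$ is a direct sum over the $p$-cells of $X$ of copies of $KR^{p+q}(D^p \amalg D^p,\, S^{p-1} \amalg S^{p-1})$, with $\sigma$ exchanging the two discs. Using suspension together with the $(1,1)$-periodicity isomorphism in $KR$-theory \cite{at} I would reduce each summand to $KR^0(D^{2p+q} \amalg D^{2p+q},\, S^{2p+q-1} \amalg S^{2p+q-1})$, where now $\sigma$ swaps the discs and additionally acts as $-1$ on $p+q$ of the coordinates; collapsing one disc identifies this with the ordinary reduced complex $K$-theory $\widetilde{K}^0(S^{2p+q})$, which is $\mathbb{Z}$ for $q$ even and $0$ for $q$ odd. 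This already gives $E_1^{p,q} = 0$, hence $E_3^{p,q} = 0$, when $q$ is odd.

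The crux is to pin down the coefficient local system for $q$ even. Transporting a generator of $\widetilde{K}^0(S^{2p+q}) = \mathbb{Z}$ around a loop in $X$ that interchanges the sheets of $X_\epsilon$ combines two effects: complex conjugation, which acts on $\widetilde{K}^0(S^{2p+q})$ as $(-1)^{p+q/2}$, and the coordinate reflection in $p+q$ variables, which is a self-map of $S^{2p+q}$ of degree $(-1)^{p+q}$; since $q$ is even the net scalar is $(-1)^{p+q/2}(-1)^{p+q} = (-1)^{q/2}$, which is exactly the monodromy of the local system $\mathbb{Z}_{(q/2)\epsilon}$. Comparing the filtration with the analogous Atiyah--Hirzebruch filtration in ordinary cohomology through the Chern character then identifies the differential $d_1 : E_1^{p,q} \to E_1^{p+1,q}$ with the cellular coboundary, so $(E_1^{\bullet,q}, d_1)$ is the cellular cochain complex of $X$ with coefficients in $\mathbb{Z}_{(q/2)\epsilon}$ and $E_2^{p,q} = H^p(X, \mathbb{Z}_{(q/2)\epsilon})$ for $q$ even, $0$ otherwise. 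Finally, because $d_2 : E_2^{p,q} \to E_2^{p+2,q-1}$ flips the parity of $q$ while $E_2$ is concentrated in even $q$, the differential $d_2$ vanishes, so $E_3 = E_2$, which is the assertion. I expect the local-system bookkeeping in this last step --- extracting the correct sign from the interaction of complex conjugation with the coordinate reflections --- to be the only genuinely delicate point; everything else is a routine adaptation of the Atiyah--Hirzebruch construction.
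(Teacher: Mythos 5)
Your proposal follows the paper's own argument essentially step for step: the same pulled-back CW filtration on $X_\epsilon$, the same reduction of $E_1^{p,q}$ via suspension and $(1,1)$-periodicity to $\widetilde{K}^0(S^{2p+q})$ cell by cell, the same sign computation $(-1)^{p+q/2}(-1)^{p+q}=(-1)^{q/2}$ identifying the monodromy with $\mathbb{Z}_{(q/2)\epsilon}$, the same Chern character comparison to pin down $d_1$, and the same parity argument killing $d_2$. It is correct as written and there is nothing substantive to add.
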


Observe that on tensoring by $\mathbb{R}$ we obtain exactly the $E_3$ term in the spectral sequence for twisted cohomology obtained in Section (\ref{twco}), where $L$ is the line bundle corresponding to $\epsilon$ and we take $\alpha$ and $H$ to be trivial. Let $E \in KR^0_\epsilon(X)$ be a real vector bundle and choose a unitary connection $\nabla$ on $E$ which preserves the real structure. The Chern character defined by $\nabla$ determines a map $KR^0_\epsilon(X) \to H^{0,0,0}_\epsilon(X)$. We can use suspension isomorphisms to extend this to a map of the form $KR^i_\epsilon(X) \to H^{i,0,0}_\epsilon(X)$, which we can think of as the Chern character for $KR$-theory of a double cover. We expect that this can be extended to a twisted Chern character for twisted $KR$-theory with values in the twisted cohomology $H^{i,\alpha,H}_\epsilon(X)$.\\

\begin{example}\label{ex1} Let us consider the simplest possible non-trivial example of $T$-duality in $KR$-theory. We take as our base the circle $M = S^1$. Let $\epsilon$ be the non-trivial class in $H^1(S^1,\mathbb{Z}_2) = \mathbb{Z}_2$. According to Definition \ref{tdd} the torus $X = T^2 \to S^1$ and the Klein bottle $\hat{X} = K \to S^1$ are T-duals, both equipped with trivial graded gerbes. By Proposition \ref{tdciso} we have an isomorphism of twisted cohomology: $H^{i,0,0}_\epsilon(T^2) \simeq H^{i-1,0,0}_\epsilon(K)$. Our conjecture is that this lifts to $KR$-theory so we expect a corresponding isomorphism $KR^i_\epsilon(T^2) \simeq KR^{i-1}_\epsilon(K)$. Applying the spectral sequence of Proposition \ref{specseq} we can easily calculate the twisted $KR$-theories of $T^2$ and $K$. The results are as follows:
\begin{equation*}
\renewcommand{\arraystretch}{1.4}
\begin{tabular}{|l|l|l|}
\hline
$i$ & $KR^i_\epsilon(T^2)$ & $KR^i_\epsilon(K)$ \\
\hline
$0$ & $\mathbb{Z} \oplus \mathbb{Z}_2$ & $\mathbb{Z}^2$ \\
$1$ & $\mathbb{Z}^2$ & $\mathbb{Z}$ \\
$2$ & $\mathbb{Z}$ & $\mathbb{Z}_2$ \\
$3$ & $\mathbb{Z}_2$ & $\mathbb{Z} \oplus \mathbb{Z}_2$ \\
\hline
\end{tabular}
\end{equation*}
in complete agreement with out expectations.
\end{example}


\appendix

\section{Leray-Serre spectral sequence for affine torus bundles}\label{aftbss}

Let $V$ be the $n$-dimensional vector space $V = \mathbb{R}^n$, $\Lambda = \mathbb{Z}^n$ the standard rank $n$ lattice in $V$ and $T^n = V/\Lambda$ the corresponding rank $n$ torus. Let ${\rm Aff}(T^n) = {\rm GL}(n,\mathbb{Z}) \ltimes T^n$ be the group of affine transformations of $T^n$ generated by translations and linear transformations in ${\rm GL}(n,\mathbb{Z})$. In \cite{bar} we say that a $T^n$-bundle $\pi : X \to M$ is {\em affine} if it is a locally trivial fibre bundle with $T^n$ fibres and transition maps valued in ${\rm Aff}(T^n)$. We now recall the classification of such bundles. First the projection ${\rm Aff}(T^n) \to {\rm GL}(n,\mathbb{Z})$ implies that to every affine torus bundle $X \to M$ is an associated principal ${\rm GL}(n,\mathbb{Z})$-bundle. Such bundles are in bijection with conjugacy classes of representations $\rho : \pi_1(M) \to {\rm GL}(n,\mathbb{Z})$ which we call the {\em monodromy} of $X$. 

Using the ${\rm GL}(n,\mathbb{Z})$ action on $\Lambda = \mathbb{Z}^n$ we have that associated to any representation $\rho : \pi_1(M) \to {\rm GL}(n,\mathbb{Z})$ is a local system $\Lambda_\rho$ with $\mathbb{Z}^n$ coefficients. Secondly associated to $X$ there is a degree $2$ class called the {\em twisted Chern class} which has values in a local system with $\mathbb{Z}^n$ coefficients. If one identifies the principal ${\rm GL}(n,\mathbb{Z})$-bundle associated to $X$ with a monodromy representation $\rho$ then the twisted Chern class can be identified with a class $c \in H^2(M,\Lambda_\rho)$. However the pair $(\rho,c)$ is not uniquely defined by $X$. Rather any pair $(\rho',c')$ related to $(\rho,c)$ by the natural action of ${\rm GL}(n,\mathbb{Z})$ is similarly associated to $X$.

It is shown in \cite{bar} that isomorphism classes of rank $n$ affine torus bundles over $M$ are in bijection with equivalence classes of pairs $(\rho,c)$ where $\rho : \pi_1(M) \to {\rm GL}(n,\mathbb{Z})$ is a monodromy representation and $c \in H^2(M,\Lambda_\rho)$ a twisted Chern class. The equivalence relation is as above: $(\rho,c) \sim (\rho' , c')$ if they are related by the natural action of ${\rm GL}(n,\mathbb{Z})$.\\

Let $\pi : X \to M$ be a rank $n$ affine torus bundle over $M$ and $A$ a local system on $M$ with $\mathbb{Z}$-coefficients. In \cite{bar} we determined the $E_2$ stage $E_2^{p,q}(\pi,A)$ of the Leray-Serre spectral sequence for $X \to M$ and the differential $d_2$. The terms in the $E_2$-stage are
\begin{equation*}
E_2^{p,q}(\pi,A) = H^p(M , A \otimes \wedge^q \Lambda_\rho^*).
\end{equation*}
To define the differential $d_2 : E_2^{p,q}(\pi,A) \to E_2^{p+2,q-1}(\pi,A)$ first let us note that the contraction map $\Lambda \otimes \wedge^q (\Lambda)^* \to \wedge^{q-1} \Lambda^*$ extends to a map of local systems $\Lambda_\rho \otimes \wedge^q \Lambda_\rho^* \to \wedge^{q-1} \Lambda^*_\rho$ and thus to cohomology $H^r(M , \Lambda_\rho) \otimes H^s(M,A \otimes \wedge^q \Lambda_\rho^*) \to H^{r+s}(M,A \otimes \wedge^{q-1} \Lambda_\rho^*)$. Setting $r=2, s = p$ and substituting the twisted Chern class into the first factor we get a map $H^{p}(M,A \otimes \wedge^q \Lambda_\rho^*) \to H^{p-2}(M,A \otimes \wedge^{q-1} \Lambda_\rho^*)$. We showed that this map is precisely the differential $d_2$. To be more precise one should fix sign conventions in defining the contraction in cohomology as done in \cite{bar}. For the present paper the level of detail here suffices.

\bibliographystyle{amsplain}

\begin{thebibliography}{10}
\bibitem{alst}A. Alekseev, T. Strobl, Current algebras and differential geometry. {\em J. High Energy Phys.} (2005), no. 3, {\bf 035}, 14 pp.
\bibitem{at}M. F. Atiyah, $K$-theory and reality. {\em Quart. J. Math. Oxford Ser.} (2) {\bf 17} (1966) 367-386.
\bibitem{bar0}D. Baraglia, Leibniz algebroids, twistings and exceptional generalized geometry, arXiv:1101.0856v2, (2011).
\bibitem{bar}D. Baraglia, Topological T-duality for general circle bundles, arXiv:1105.0290v2 (2011).
\bibitem{bar1}D. Baraglia, Topological T-duality for torus bundles with monodromy, in preparation.
\bibitem{bgs}O. Bergman, E. Gimon, S. Sugimoto, Orientifolds, RR torsion, and $K$-theory. {\em J. High Energy Phys.} (2001), no. {\bf 5}, Paper 47, 31 pp.
\bibitem{bem}P. Bouwknegt, J. Evslin, V. Mathai, T-duality: topology change from $H$-flux. {\em Comm. Math. Phys.} {\bf 249} (2004), no. 2, 383-415.
\bibitem{bhm1}P. Bouwknegt, K. Hannabuss, V. Mathai, T-duality for principal torus bundles and dimensionally reduced Gysin sequences. {\em Adv. Theor. Math. Phys.} {\bf 9} (2005), no. 5, 749-773.
\bibitem{lb}D. Li-Bland, AV-Courant algebroids and generalized CR structures, {\em Canad. J. Math.} {\bf 63}(4) (2011), 938-960.
\bibitem{braste}V. Braun, B. Stefanski, Orientifolds and $K$-theory, arXiv:hep-th/0206158v1, (2002)
\bibitem{brs}U. Bunke, P. Rumpf, T. Schick, The topology of T-duality for $T^n$-bundles. {\em Rev. Math. Phys.} {\bf 18} (2006), no. 10, 1103-1154.
\bibitem{bunksch}U. Bunke, T. Schick, On the topology of T-duality. {\em Rev. Math. Phys.} {\bf 17} (2005), no. 1, 77-112.
\bibitem{bus}T. H. Buscher, Path-integral derivation of quantum duality in nonlinear sigma-models. {\em Phys. Lett. B} {\bf 201} (1988), no. 4, 466-472. 
\bibitem{cav}G. Cavalcanti, {\em New aspects of the $dd^c$-lemma}. D.Phil. thesis, Oxford University, (2004). math.DG/0501406.
\bibitem{cls}Z. Chen, Z. Liu, Y. Sheng, $E$-Courant algebroids. {\em Int. Math. Res. Not. IMRN}, no. {\bf 22} (2010), 4334-4376.
\bibitem{cobr}H. Garc\'ia-Compe\'an, O. Loaiza-Brito, Branes and fluxes in orientifolds and $K$-theory. {\em Nuclear Phys. B} {\bf 694} (2004), no. 3, 405-442.
\bibitem{dfm0}J. Distler, D. S. Freed, G. W. Moore, Orientifold precis, arXiv:0906.0795v2, (2009).
\bibitem{dfm}J. Distler, D. S. Freed, G. W. Moore, Spin structures and superstrings, arXiv:1007.4581v1, (2010).
\bibitem{dug}D. Dugger, An Atiyah-Hirzebruch spectral sequence for $KR$-theory. {\em $K$-Theory} {\bf 35} (2005), no. 3-4, 213-256 (2006).
\bibitem{du}J. L. Dupont, Symplectic bundles and $KR$-theory. {\em Math. Scand.} {\bf 24} (1969) 27-30.
\bibitem{fer}R. L. Fernandes, Lie algebroids, holonomy and characteristic classes. {\em Adv. Math.} {\bf 170 } (2002), no. 1, 119-179.
\bibitem{fht}D. S. Freed, M. J. Hopkins, C. Teleman, Loop groups and twisted K-theory I, arXiv:0711.1906v1 (2007).
\bibitem{fnsw}J. Fuchs, T. Nikolaus, C. Schweigert, K. Waldorf, Bundle gerbes and surface holonomy. European Congress of Mathematics, 167-195, {\em Eur. Math. Soc.}, Zürich, (2010).
\bibitem{gg}J. Grabowski, G. Marmo, The graded Jacobi algebras and (co)homology. {\em J. Phys. A} {\bf 36} (2003), no. 1, 161-181.
\bibitem{gual}M. Gualtieri. {\em Generalized complex geometry}. D.Phil. thesis, Oxford University, (2003). math.DG/0401221.
\bibitem{hull}C. Hull, Generalised geometry for M-theory. {\em J. High Energy Phys.} {\bf 0707} (2007), 079, 31 pp.
\bibitem{iban}R. Ib\'{a}\~{n}ez, M. de Le\'{o}n, J. C. Marrero, E. Padr\'{o}n, Leibniz algebroid associated with a Nambu-Poisson structure. {\em J. Phys.} A {\bf 32} (1999), no. 46, 8129-8144.
\bibitem{lwx}Z.-J. Liu, A. Weinstein, P. Xu, Manin triples for Lie bialgebroids. {\em J. Differential Geom.} {\bf 45} (1997), no. 3, 547-574.
\bibitem{mac}K. Mackenzie, Lie groupoids and Lie algebroids in differential geometry. London Mathematical Society Lecture Note Series, 124. Cambridge University Press, Cambridge, (1987), 327 pp.
\bibitem{roy}D. Roytenberg, {\em Courant algebroids, derived brackets and even symplectic supermanifolds}. PhD thesis, Berkeley (1993). arXiv:math/9910078v1.
\bibitem{roywei}D. Roytenberg, A. Weinstein, Courant algebroids and strongly homotopy Lie algebras. {\em Lett. Math. Phys.} {\bf 46} (1998), no. 1, 81-93.
\bibitem{sev}P. \v{S}evera, letters to Alan Weinstein, http://sophia.dtp.fmph.uniba.sk/~severa/letters/
\bibitem{sw}P. \v{S}evera, A. Weinstein, Poisson geometry with a 3-form background. Noncommutative geometry and string theory, {\em Progr. Theoret. Phys. Suppl.} No. {\bf 144} (2001), 145-154. 
\bibitem{vai}I. Vaisman, Dirac structures and generalized complex structures on $TM \times \mathbb{R}^h$. {\em Adv. Geom.} {\bf 7} (2007), no. 3, 453-474.
\bibitem{wad}A. Wade, Conformal Dirac structures. {\em Lett. Math. Phys.} {\bf 53} (2000), no. 4, 331-348.
\bibitem{wit}E. Witten, D-branes and $K$-theory. {\em J. High Energy Phys.} (1998), no. {\bf 12}, Paper 19, 41 pp.
\end{thebibliography}

\end{document}